\documentclass{amsart}

\usepackage{amssymb,euscript,amsmath, mathrsfs}
\usepackage[dvips]{graphicx}
\usepackage[dvips]{color}
\usepackage{epsfig}
\usepackage{MnSymbol}

\newcounter{ENUM}
\newcommand{\itm}{\item}
\newenvironment{ilist}{\renewcommand{\theENUM}{\roman{ENUM}}\renewcommand{\itm}{\addtocounter{ENUM}{1}\item[(\theENUM)]}\begin{itemize}\setcounter{ENUM}{0}}{\end{itemize}}
\newenvironment{alist}[1][0]{\renewcommand{\theENUM}{\alph{ENUM}}\renewcommand{\itm}{\addtocounter{ENUM}{1}\item[\theENUM)]}\begin{itemize}\setcounter{ENUM}{#1}}{\end{itemize}}


\def\Lie{{\mathscr Lie}}
\def\Eil{{\mathscr Eil}}
\def\LC{{\bf LC}}
\def\brkone{[ \cdot , \cdot]}
\def\brktwo{\langle \cdot , \cdot \rangle}
\def\brk{\{ \cdot , \cdot \}}

\def\OG{{\mathcal OG}}
\def\BT{{\mathcal BT}}
\def\QBT{{\mathcal QBT}}
\def\cO{{\mathcal O}}
\def\cM{{\mathcal M}}
\def\sG{{\mathscr G}}
\def\sB{{\mathscr B}}
\def\sO{{\mathscr O}}
\def\Com{{\mathscr Com}}
\def\gr{\operatorname{gr}}
\def\rlex{\operatorname{rlex}}
\def\op{\operatorname{op}}
\def\ind{\operatorname{ind}}

\def\sP{{\mathscr P}}
\def\sQ{{\mathscr Q}}
\def\cR{{\mathcal R}}

\def\cG{{\mathcal G}}

\def\cB{{\mathcal B}}

\newtheorem{thm}{Theorem}[section]
\newtheorem{prop}[thm]{Proposition}
\newtheorem{lem}[thm]{Lemma}
\newtheorem{cor}[thm]{Corollary}

\theoremstyle{definition}
\newtheorem{defn}[thm]{Definition}
\newtheorem{ques}[thm]{Question}
\newtheorem{ex}[thm]{Example}

\theoremstyle{remark}

\newtheorem{rem}[thm]{Remark}

\numberwithin{equation}{section}

\begin{document}
\title[Combinatorial bases for $\Lie_2(n)$ and $\sP_2(n)$]{Combinatorial bases for multilinear parts of free algebras with double compatible brackets}
\author{Fu Liu}
\begin{abstract}
Let $X$ be an ordered alphabet. $\Lie_2(n)$ (and $\sP_2(n)$ respectively) are the multilinear parts of the free Lie algebra (and the free Poisson algebra respectively) on $X$ with a pair of compatible Lie brackets. In this paper, we prove the dimension formulas for these two algebras conjectured by B. Feigin by constructing bases for $\Lie_2(n)$ (and $\sP_2(n)$) from combinatorial objects. We also define a complementary space $\Eil_2(n)$ to $\Lie_2(n),$ give a pairing between $\Lie_2(n)$ and $\Eil_2(n)$, and show that the pairing is perfect.

\end{abstract}
\maketitle

\section{Introduction}
Fix a commutative ring $R$ with unit. We recall a {\it Lie algebra} over $R$ is an $R$-module $V$ equipped with a bilinear binary operation  $\brkone,$ called a {\it Lie bracket}, satisfying two properties: for any $x,y,z \in V,$
\begin{eqnarray*}
&\mbox{antisymmetry}& [x, y] = - [y,x],  \\
&\mbox{Jacobi identity}&  [x , [y, z]] + [y, [z, x]] + [z, [x, y]] = 0.
\end{eqnarray*}
A closely related type of algebra is the Poisson algebra. A {\it Poisson algebra} over $R$ is an $R$-module $V$ equipped with two bilinear binary operations: a Lie bracket $\brkone$ and an associative commutative multiplication such that the Lie bracket is a {\it derivation} of the commutative multiplication: that is, for any $x, y, z \in V,$ we have
\begin{equation*}
[x, yz] = [x, y]z + y[x, z].
\end{equation*}

Let $X = \{x_1 < x_2 < \dots < x_n \}$ be an ordered alphabet. The {\it free Lie algebra} on $X$ over $R$ is the Lie algebra over $R$ that is generated by all possible Lie bracketings of elements of $X$ with no relations other than antisymmetries and Jacobi identities. Let $\Lie(n)$ be the {\it multilinear part} of this free Lie algebra: i.e., the subspace consisting of all elements containing each $x_i$ exactly once. We define the free Poisson algebra on $X$ similarly, and let $\sP(n)$ be its multilinear part. It is well known that the rank of $\Lie(n)$ is $(n-1)!$ and $\sP(n)$ is $n!.$ 

In this paper, we consider a free algebra on $X$ with two Lie brackets $\brkone$ and $\brktwo,$ which are {\it compatible}: that is, any linear combination of them is a Lie bracket. In fact, if we write out this condition explicitly, the compatibility gives one condition in addition to the antisymmetry and Jacoby identity for each of the two brackets. We call this additional condition the {\it mixed Jacobi identity}. For easy reference, we put these five relations together here: for any $x, y, z,$
\begin{itemize}
\item[(S1)] $[x, y] + [y,x] = 0,$
\item[(S2)] $\langle x, y \rangle + \langle y , x \rangle = 0,$
\item[(J1)] $ [x , [y, z]] + [y, [z, x]] + [z, [x, y]] = 0,$
\item[(J2)] $ \langle x , \langle y, z \rangle \rangle + \langle y, \langle z, x\rangle\rangle+ \langle z, \langle x, y\rangle\rangle = 0,$
\item[(MJ)] $ [x , \langle y, z\rangle ] + [y, \langle z, x\rangle ] + [z, \langle x, y\rangle] + \langle x , [y, z]\rangle + \langle y, [z, x]\rangle + \langle z, [x, y]\rangle = 0.$
\end{itemize}

Let $\Lie_2(n)$ be the multilinear part of this free algebra. Similarly, we let $\sP_2(n)$ be the multilinear part of the free algebra with two compatible Lie brackets and one associative commutative multiplication, where both of the Lie brackets are derivations of the commutative multiplication. Therefore, in addition to (S1), (S2), (J1), (J2) and (MJ), there are two more kinds of relations in $\sP_2(n):$ for any $x, y, z,$
\begin{itemize}
\item[(D1)] $[x, yz] - [x, y]z - y[x, z] = 0,$
\item[(D2)] $\langle x, yz \rangle - \langle x, y \rangle z - y\langle x, z  \rangle= 0,$
\end{itemize}

Several years ago, B. Feigin conjectured that these spaces may be connected with the work of M. Haiman. As a result, Feigin gave conjectural formulas for the ranks of $\Lie_2(n)$ and $\sP(n),$ which are two of the main theorems of this paper. 

\begin{thm}\label{main1}
$\Lie_2(n)$ is free of rank $n^{n-1}$.
\end{thm}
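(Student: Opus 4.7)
The target dimension $n^{n-1}$ is Cayley's count of labeled rooted trees on $\{x_1, \ldots, x_n\}$, which strongly suggests that the sought basis should be indexed by rooted trees (or an equivalent combinatorial class, such as parking functions). I plan to define an explicit map $T \mapsto e_T$ from labeled rooted trees to elements of $\Lie_2(n)$: recursively, if $T$ has root $x_i$ and child-subtrees $T_1, \ldots, T_k$, assemble $e_T$ by bracketing the root against the $e_{T_j}$ in a prescribed order, using $[\cdot,\cdot]$ at some internal nodes and $\langle \cdot,\cdot\rangle$ at others, with the choice of bracket at each step determined by combinatorial data of $T$ (for instance, by comparing $x_i$ with the minimum labels of the child subtrees).

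The spanning step should be a straightening algorithm. Represent an arbitrary bracket monomial as a planar binary tree whose leaves are labeled by $X$ and whose $n-1$ internal nodes carry one of two colors. Define a monomial order on such colored trees and orient each of (S1), (S2), (J1), (J2), (MJ) as a rewrite rule that replaces a forbidden pattern by a sum of strictly smaller trees. Well-foundedness of the order then forces the rewriting process to terminate, producing an expression of every monomial as a linear combination of the $e_T$; this gives the upper bound $\dim \Lie_2(n) \le n^{n-1}$.

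The main obstacle is the matching lower bound, for which I plan to use the perfect-pairing construction announced in the abstract. Introduce the complementary space $\Eil_2(n)$, a basis $\{f_T\}$ indexed by the same rooted trees, and a bilinear pairing $\Lie_2(n) \times \Eil_2(n) \to R$ given by an explicit combinatorial rule on pairs $(T, T')$. The key technical verification is that this pairing is well-defined on $\Lie_2(n)$, i.e., that it annihilates each of the relations (S1), (S2), (J1), (J2), (MJ); the check for (MJ), which genuinely couples the two brackets and has no counterpart in the classical single-bracket theory, is the most delicate point and the step most likely to dictate the precise choice of the map $T\mapsto e_T$. Granted well-definedness, one orders the tree index set so that the Gram matrix $(\langle e_T, f_{T'}\rangle)$ is triangular with invertible diagonal entries; its non-singularity simultaneously yields linear independence of the $e_T$'s, perfection of the pairing, and the equality $\dim \Lie_2(n)=n^{n-1}$.
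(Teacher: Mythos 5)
Your proposal follows essentially the same route as the paper: a basis $\cB_n(X)$ of monomials indexed by labeled rooted trees (with bracket colors read off from increasing versus decreasing edges), spanning via a straightening algorithm driven by the five relations, and linear independence via a pairing with a complementary space $\Eil_2(n)$ of (oriented, two-colored) graphs for which the Gram matrix is triangular with $\pm 1$ diagonal entries relative to a suitable partial order on the index set. The plan is sound and matches the paper's architecture, including the observation that verifying the pairing annihilates (MJ) is the genuinely new point beyond the classical $\Lie(n)$/graph-complex pairing.
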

\begin{thm}\label{main2}
$\sP_2(n)$ is free of rank $(n+1)^{n-1}.$
\end{thm}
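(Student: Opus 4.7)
The plan is to reduce Theorem~\ref{main2} to Theorem~\ref{main1} via a Poincar\'e--Birkhoff--Witt type decomposition
$$\sP_2(n) \;\cong\; \bigoplus_{\pi} \bigotimes_{B \in \pi} \Lie_2(|B|),$$
where $\pi$ ranges over set partitions of $\{1,2,\ldots,n\}$ and $B$ runs over the blocks of $\pi$. This is the two-bracket analogue of the classical identification $\sP(n) \cong \bigoplus_\pi \bigotimes_B \Lie(|B|)$ for the ordinary Poisson operad.

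The first step is to establish this decomposition. Iterating the derivation identities (D1) and (D2), together with antisymmetry and commutativity of the multiplication, one can rewrite every multilinear monomial of $\sP_2(n)$ as a commutative product $L_1 L_2 \cdots L_k$, where each factor $L_i$ is an iterated bracketing---in both $\brkone$ and $\brktwo$---of the letters of a single block $B_i$ of some set partition $\pi = \{B_1,\ldots,B_k\}$. This gives a surjection from the right-hand side of the displayed isomorphism onto $\sP_2(n)$. The harder direction is to show that no further relations among such normal-form products are forced beyond those already imposed on each $L_i$ by (S1), (S2), (J1), (J2), (MJ).

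Granting the decomposition, Theorem~\ref{main1} yields
$$\rk \sP_2(n) \;=\; \sum_{\pi} \prod_{B \in \pi} |B|^{|B|-1}.$$
The exponential formula identifies the generating function of the right-hand side with $\exp(T(x))$, where $T(x) = \sum_{n \ge 1} n^{n-1} x^n/n!$ satisfies the rooted-tree equation $T = x e^T$. From the latter, $\exp(T(x)) = T(x)/x = \sum_{n \ge 0} (n+1)^{n-1} x^n/n!$ (equivalently, invoke the standard bijection between rooted forests on $[n]$ and labelled trees on $[n+1]$ obtained by adjoining a new root labelled $n+1$), which produces the rank $(n+1)^{n-1}$. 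An explicit combinatorial basis for $\sP_2(n)$ then results from assembling, over each set partition $\pi$, the bases of the factors $\Lie_2(|B|)$ furnished by the proof of Theorem~\ref{main1}; the resulting objects can be repackaged as labelled trees on $n+1$ vertices rooted at $n+1$, matching Cayley's count.

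The main obstacle is the PBW step. In the single-bracket setting the decomposition $\sP(n) \cong \bigoplus_\pi \bigotimes_B \Lie(|B|)$ is classical, but the mixed Jacobi identity (MJ) couples $\brkone$ and $\brktwo$ in a way that could in principle produce cross-block relations. I would handle this by a diamond-lemma argument: fix a total order on Poisson monomials (first by the induced set partition, then by the $\Lie_2(|B|)$-basis decorations on each block), orient each of (D1), (D2), (MJ) as a strictly order-decreasing rewriting rule, and verify confluence at overlapping reductions. Alternatively, one can pair the spanning set of size $\sum_\pi \prod_B |B|^{|B|-1}$ with a linearly independent dual family of the same size---for example, using the perfect pairing between $\Lie_2(n)$ and $\Eil_2(n)$ promised in the abstract, extended multiplicatively over blocks---so as to force the rank equality without an explicit straightening argument.
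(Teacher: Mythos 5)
Your proposal is correct and takes essentially the same route as the paper: Proposition~\ref{com} is exactly your block decomposition over set partitions (with blocks ordered by their maxima), Corollary~\ref{equiv} is your exponential-formula step turning $n^{n-1}$ into $(n+1)^{n-1}$, and the paper resolves the linear-independence (``PBW'') step by precisely your second suggested device --- extending the perfect pairing between $\Lie_2$ and $\Eil_2$ multiplicatively over blocks, via quasi-binary trees and a Kronecker-product argument in Section~10. No diamond-lemma/rewriting argument is needed.
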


It turns out these two theorems are equivalent to each other (Corollary \ref{equiv}). Therefore, it is enough to show one of the theorems. We will focus on Theorem \ref{main1} in this paper. The basic idea is to construct a set of $n^{n-1}$ monomials which spans $\Lie_2(n)$, and to prove linear independence via the use of a perfect pairing.

The plan of this paper is as follows: In sections 2 and 3, we define basic combinatorial objects $\overline{\cG}_n,$  construct a set of monomials $\cB_n(X)$ from $\overline{\cG}_n$ and show $\cB_n(X)$ spans $\Lie_2(n).$ Section 4 is independent from the rest of the paper. It gives a purely algebraic way to show the independence of $\cB_n(X),$ and thus conclude Theorem \ref{main1}. In sections 5--7, we give another approach to proving the independence of $\cB_n(X).$ We introduces new combinatorial objects to describe $\Lie_2(n)$ as well as another space $\Eil_2(n),$ and define a pairing between these two spaces. Then by showing this pairing is a perfect pairing (Theorem \ref{main3}), we conclude $\cB_n(X)$ is a basis for $\Lie_2(n)$ and Theorem \ref{main1}. In section 8, we give a sufficient condition for a set of monomials of $\Lie_2(n)$ to be a basis (Theorem \ref{main4}), which provides us more bases for $\Lie_2(n).$ Based on the relation between $\Lie_2(n)$ and $\sP_2(n),$ in section 9, we build bases for $\sP_2(n)$ from bases for $\Lie_2(n)$ (Proposition \ref{com}), which we show in section 10 are indeed bases for $\sP_2(n).$ The equivalence between Theorem  \ref{main1} and Theorem \ref{main2} is an immediate corollary to this result, and then we can complete the proof of Theorem \ref{main2}. We complete our paper with Section \ref{questions}, where we propose some possible direction for future research.

Finally, we mention that Dotsenko and Khoroshkin independently prove Theorems \ref{main1} and \ref{main2} in \cite{dot-kho} using the theory of operads. They also obtain character formulas for the representation of the symmetric groups and the $SL_2$ group in $\Lie_2(n)$ and $\sP_2(n).$ The approach in our paper is quite different from \cite{dot-kho}. Our method is more combinatorial, and we create more bases for $\Lie_2(n).$ We expect that our additional bases will have applications to the theory of operads. 


\section{Two-Colored Graphs and Rooted trees}
Our chosen alphabet $X = \{x_1 < x_2 < \dots < x_n \}$ will form the vertex set of the combinatorial objects we are going to define.

\begin{defn}
A {\it two-colored graph} is a connected graph whose edges are colored by two colors, red and blue. We denote by $\cG_n$ the set of all two-colored graphs on $X.$
\end{defn}

\begin{defn}
A {\it tree} is a connected acyclic graph. A {\it rooted tree} is a tree with one special vertex, which we call it the {\it root} of the tree. (Note the edges of rooted trees here are not colored.) Let $\cR_n$ be
the set of all rooted trees on $X.$ 

For any edge $\{i,j\}$ in a rooted tree, if $i$ is closer to the root than $j,$ then we call $i$ the {\it parent} of $j$ and $j$ a {\it child} of $i.$ (It is clear that any non-root vertex has a unique parent, but can have multiple children.) Furthermore, if $i$ is the parent of $j$, we call the edge $\{i,j\}$ an {\it increasing edge} if $i < j$ and a {\it decreasing edge} if $i > j$.
\end{defn}
It is well known that the
cardinality of $\cR_n$ is $|X|^{|X|-1}=n^{n-1}$ \cite[Proposition 5.3.2]{ec2}. 

We define a color map $c$ from $\cR_n$ to $\cG_n$ as follows. Given any tree $T
\in \cR_n,$ we color all of the increasing edges by red and all of
the decreasing edges by blue,  and denote the resulting two-colored
tree by $c(T)$ (by treating the root of $T$ as a regular vertex).

Before we discuss the color map, we define a special subset of $\cG_n.$

\begin{defn}\label{2color} 
Let $G$ be a two-colored graph,
\begin{ilist}
\itm If $\exists i < j < k,$ such that $\{i,k\}, \{j, k\}$ are both red edges in $T,$ we say $T$ has pattern $1r3r2$ (or $2r3r1$) where $``r"$ stands for a red edge.
\itm If $\exists i < j < k,$ such that $\{i, j\}, \{i, k\}$ are both blue edges in $T,$ we say $T$ has pattern $2b1b3$ (or $3b1b2$), where $``b"$ stands for a blue edge.
\itm If $\exists i < j < k,$ such that $\{i, j\}$ is a red edge and $\{j, k\}$ is a blue edge in $T,$ we say $T$ has pattern $1r2b3$ (or $3b2r1$).
\end{ilist}

Let $\overline{\cG}_n$ be the set of all two-colored trees in $\cG_n$ avoiding patterns $1r3r2$, $2b1b3$, and $1r2b3.$
\end{defn}

\begin{lem}
The color map $c$ gives a bijection between $\cR_n$ and $\overline{\cG}_n.$ Hence, the cardinality of $\overline{\cG}_n$ is $n^{n-1}.$
\end{lem}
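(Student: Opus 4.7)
The plan is to prove $c$ is a bijection by directly constructing its inverse. The key observation is that the color of an edge alone already encodes which endpoint is a parent: in a red (increasing) edge the smaller vertex is the parent, and in a blue (decreasing) edge the larger vertex is the parent. So the coloring determines, for each edge, a ``parent endpoint'' and a ``child endpoint'' without any reference to a distinguished root.

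First I verify that $c$ lands in $\overline{\cG}_n$. If $c(T)$ had pattern $1r3r2$ with $i<j<k$, then $\{i,k\}$ and $\{j,k\}$ would both be increasing in $T$, so both $i$ and $j$ would be parents of $k$, contradicting the uniqueness of the parent in a rooted tree. The patterns $2b1b3$ (two blue edges from some vertex $i$ to larger vertices $j,k$ would force both $j$ and $k$ to be parents of $i$) and $1r2b3$ (a red $\{i,j\}$ and a blue $\{j,k\}$ with $i<j<k$ would force both $i$ and $k$ to be parents of $j$) are ruled out analogously.

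Next I handle surjectivity. Given $G\in\overline{\cG}_n$, I claim the ``parent endpoint'' rule defines a rooted tree $T$ with $c(T)=G$. A vertex $v$ is the child endpoint of an incident edge $\{v,w\}$ precisely when either the edge is red with $w<v$, or the edge is blue with $w>v$. The heart of the argument is that the three forbidden patterns are exactly the three ways in which $v$ could be a child endpoint of two incident edges simultaneously: two red edges to smaller vertices, two blue edges to larger vertices, or one of each type. Hence in any $G\in\overline{\cG}_n$, every vertex is the child endpoint of at most one incident edge. Since the $n-1$ edges each contribute exactly one child endpoint distributed among $n$ vertices, a counting argument forces exactly one vertex $r$ to be the child endpoint of no edge. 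Declaring $r$ the root and reading off parent-child relations from the coloring yields a rooted tree; acyclicity of trees prevents parent chains from looping, so every such chain terminates at $r$, and $c$ of this rooted tree is $G$ by construction.

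Finally, injectivity is automatic: from $c(T)$ the root is characterized as the unique vertex that is the parent endpoint of every incident edge, and the rest of the rooted tree structure is then fixed. The cardinality assertion then follows from $|\cR_n|=n^{n-1}$. The step requiring the most care is the combinatorial equivalence between the three forbidden patterns and the three obstructions to the ``at most one parent'' condition; once that correspondence is in hand, everything else is bookkeeping.
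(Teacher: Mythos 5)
Your proof is correct and follows essentially the same route as the paper: both directions rest on the observation that the edge color determines the parent endpoint, that the three forbidden patterns are precisely the three obstructions to every vertex having at most one parent, and that a unique root then exists (you get this by counting $n-1$ edges against $n$ vertices, the paper by invoking acyclicity — an immaterial difference). No gaps.
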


\begin{proof}
Given any rooted tree $T$ with root $r,$ suppose pattern $1r3r2$ appears in $c(T),$ which means $\exists i < j < k,$ such that $\{i,k\}, \{j, k\}$ are both red edges in $c(T).$ That $\{i, k\}$ is a red edge in $c(T)$ implies that it is an increasing edge in $T,$ and thus $i$ is the parent of $k.$ However, we similarly see that $j$ is the parent of $k.$ This contradicts the fact that $k$ can only have a unique parent. Hence, $c(T)$ does not have pattern $1r3r2.$ By similar arguments, we can exclude patterns $2b1b3$ and $1r2b3$. Therefore, $c(T)$ is a two colored tree in $\overline{\cG}_n,$ for any $T \in \cR_n.$ Thus, we can consider our color map to be a map from $\cR_n$ to $\overline{\cG}_n.$
 
Conversely, for any two-colored tree $G \in \overline{\cG}_n,$ we construct an oriented graph $G'$ on $X$ from $G$ as follows: for each edge $e=\{i,j\}$ in $G$ with $i < j,$ we point $i$ to $j$ in $G'$ if $e$ is red in $G$, and point $j$ to $i$ in $G'$ if $e$ is blue in $G$. Then the condition that $G$ avoids the patterns $1r3r2,$ $2b1b3,$ and $1r2b3$ implies that each vertex in $G'$ can have at most one edge pointing towards it. Because $G$ is acyclic, there is a unique vertex $r$ in $G'$ without edges pointing towards it. Therefore,  we can recover a rooted tree in $\cR_n$ from $G$ by choosing $r$ to be the root and forgetting the colors.

\end{proof}

\begin{rem}
Because of this bijection between $\cR_n$ and $\overline{\cG}_n$, in the rest of the paper, we will always consider these two sets to be the same set. In other words, when we talk about a rooted tree or an acyclic graph  $G \in \cR_n = \overline{\cG}_n,$ we consider it is a two-colored rooted tree such that
\begin{alist}
\itm G avoids patterns $1r3r2,$ $2b1b3,$ and $1r2b3$;
\itm each red edge is an increasing edge, and each blue edge is a decreasing edge.
\end{alist}
\end{rem}

\begin{ex}
In Figure \ref{exgr}, we show an example of how the color map gives a bijection between $\cR_n$ and $\overline{\cG}_n.$ The tree on the left side is a rooted tree in $\cR_3.$ We circle $x_3$ to indicate it is the root. Under the color map $c,$ we map the tree to the graph on the right side, which is in $\overline{\cG}_3.$ The graph in the middle is the two-colored rooted tree in $\overline{\cG}_3 = \cR_n$ we will consider from now on. In Figure \ref{exgr}, we still include the circle to indicate $x_3$ is the root, but in the Figures of the rest of the paper,  we will always draw the root on the highest level of a rooted tree to indicate the root instead of drawing circles.
\begin{figure}
\begin{center}
 
\input{exgr.pstex_t}
 
\caption{Examples of the bijection between $\cR_n$ and $\overline{\cG}_n.$}
\label{exgr}
\end{center}
\end{figure}
\end{ex}

\section{A basis candidate for $\Lie_2(n)$}
We will give a set of monomials of $\Lie_2(n)$ constructed from $\overline{\cG}_n =\cR_n,$ and show it spans $\Lie_2(n).$ We denote by $M_n$ the set of all monomials of $\Lie_2(n).$

\subsection{The construction of $\cB_n(X)$}
\begin{defn}\label{defBn}
For any graph $G$ in $\overline{\cG}_n = \cR_n$ with root $r,$ we define a monomial $b_G \in M_n$ recursively as follows:
\begin{ilist}
\itm If $G = r,$ let $b_G := r.$
\itm If $G \neq  r,$ let $c_1 < \cdots < c_k$ be the vertices connected to $r,$ and $G_1, \dots, G_k$ be the corresponding subtrees. 
\begin{itemize}
\item If $r < c_k$, i.e., there are red edges adjacent to $r,$ choose the smallest $c_i$ such that $\{r, c_i\}$ is a red edge. Let $b_G := [b_{G\setminus G_i}, b_{G_i}]$. 
\item If $r > c_k,$ i.e., all the edges adjacent to $r$ are blue, let $b_G := \langle b_{G_k}, b_{G \setminus G_k} \rangle.$ 
\end{itemize}
\end{ilist}
We define $\cB_n(X)$ to be the set of all monomials obtained from $\overline{\cG}_n=\cR_n:$
$$\cB_n(X) := \{ b_G \ | \ G \in \overline{\cG}_n \}.$$

\end{defn}

\begin{rem}
From the way we construct $b_G,$ it is clear that each red edge (or increasing edge) becomes $\brkone$, and each blue edge (or decreasing edge) becomes $\brktwo.$ 
\end{rem}

One checks that different trees give different monomials. Thus, the cardinality of $\cB_n(X)$ is $n^{n-1}$ as well.

\begin{ex}
When $n = 3,$ there are $3^{3-1} = 9$ rooted trees in $\overline{\cG}_n=\cR_n.$ In Figure \ref{exbn}, we show these 9 graphs together with their corresponding $b_G$'s defined in Definition \ref{defBn}. The $9$ monomials $b_G$'s shown in Figure \ref{exbn} are the elements in $\cB_3(X).$

\begin{figure}
\begin{center}
 
\input{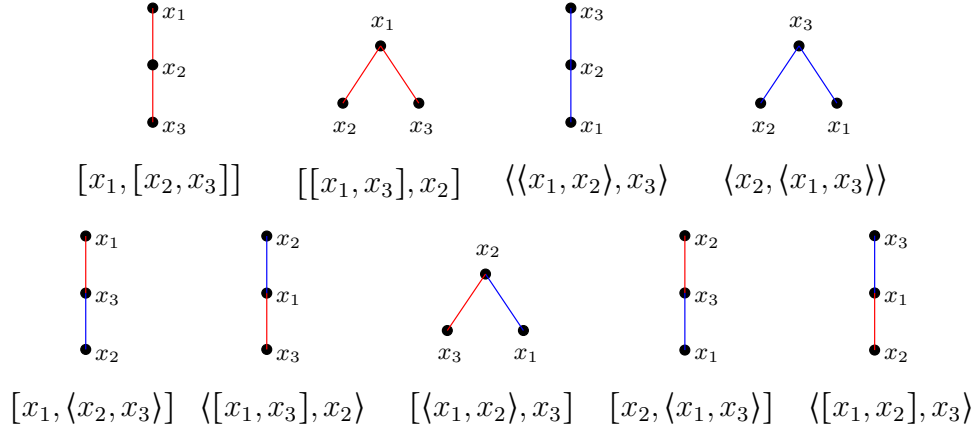}
 
\caption{Examples of the construction of $\cB_n(X).$}
\label{exbn}
\end{center}
\end{figure}

\end{ex}

We need to discuss properties of $\cB_n(X)$ before showing it spans $\Lie_2(n).$ 

\begin{defn}\label{root}
For any monomial $m \in M_n,$ we define the {\it graphical root} of $m$ recursively:
\begin{ilist}
\itm If $m = x,$ a single variable, let $\gr(m) := x;$
\itm If $m = [m_1, m_2],$ let $\gr(m) := \min \{\gr(m_1), \gr(m_2) \};$
\itm If $m = \langle m_1, m_2\rangle,$ let $\gr(m) := \max \{\gr(m_1), \gr(m_2) \};$
\end{ilist}
\end{defn}
It is clear from our definition that for $G \in \overline{\cG}_n = \cR_n,$ the graphical root of the monomial $b_G$ is exactly the root of $G.$ Using this definition, we are able to give an equivalent definition of $\cB_n(X).$

\begin{lem}\label{crtbss}
$\cB_n(X)$ is the set of all monomials $m$ in $\Lie_2(n)$ satisfying:
\begin{alist}
\itm If $n=1$ and $X = \{x\},$ then $m = x.$
\itm If $m = \{m_1, m_2\},$ where $\{ \cdot , \cdot \} = \brkone$ or $\brktwo,$ and suppose $X_i$ is the set of letters in $m_i,$ for $i=1,2,$ then $\gr(m_1) < \gr(m_2)$ and $m_i \in \cB_{|X_i|}(X_i),$ for $i = 1,2.$
\itm If $m = [[m_1, m_3], m_2],$ then $\gr(m_2) < \gr(m_3).$
\itm If $m = \langle m_1, m_2 \rangle,$ then $m_2$ is a letter or has the form $\brktwo.$
\itm If $m = \langle m_2, \langle m_1, m_3\rangle \rangle,$ then $\gr(m_1) < \gr(m_2).$
\end{alist}
\end{lem}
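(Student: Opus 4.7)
The strategy is to prove both inclusions by strong induction on $n = |X|$, using the recursive definition of $b_G$ (Definition \ref{defBn}) together with the observation immediately preceding the lemma that the graphical root of $b_G$ equals the root of $G$.

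For the forward inclusion $\cB_n(X) \subseteq \{m : m \text{ satisfies (a)--(e)}\}$, the base case $n=1$ is (a) itself. For $n \geq 2$, let $G \in \cR_n$ have root $r$. In the red subcase $b_G = [b_{G\setminus G_i}, b_{G_i}]$, where $c_i$ is the smallest red-adjacent child of $r$, the graphical roots of the two factors are $r$ and $c_i$, which gives (b) once we invoke the inductive hypothesis on the two subtrees. Condition (c) arises only when $b_{G \setminus G_i}$ is itself a red bracket $[\,\cdot\, , b_{G_j}]$; then the same recursion forces $c_j$ to be the smallest red-adjacent child of $r$ inside $G \setminus G_i$, so $c_i < c_j$ by minimality of $c_i$ in $G$. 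In the blue subcase $b_G = \langle b_{G_k}, b_{G \setminus G_k}\rangle$ with $c_k$ the largest (necessarily blue-adjacent) child of $r$, (b) is immediate. Condition (d) holds because every edge adjacent to $r$ in $G \setminus G_k$ is still blue, so $b_{G \setminus G_k}$ is either a letter or again a blue bracket. Finally, (e) follows by the analogous pull-off ordering: the next-largest child $c_{k-1}$ satisfies $c_{k-1} < c_k$.

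For the reverse inclusion, let $m$ satisfy (a)--(e). If $m$ is a single letter, take $G$ to be the one-vertex tree. Otherwise $m = \{m_1, m_2\}$ where $\{\cdot, \cdot\}$ is $\brkone$ or $\brktwo$. By (b) we have $\gr(m_1) < \gr(m_2)$ and each $m_i$ lies in $\cB_{|X_i|}(X_i)$, so by induction $m_i = b_{G_i}$ for unique $G_i \in \overline{\cG}_{|X_i|}$. Assemble $G$ by joining $G_1$ and $G_2$ with the edge $\{\gr(m_1), \gr(m_2)\}$, colored red when $m$ is a red bracket and blue when $m$ is blue, and declare the root of $G$ to be $\gr(m_1)$ in the red case and $\gr(m_2)$ in the blue case. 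Since $\gr(m_1) < \gr(m_2)$, the new edge is consistent with the red/blue${}={}$increasing/decreasing convention, so $G \in \cR_n = \overline{\cG}_n$. The remaining task is to verify $b_G = m$, which amounts to showing that Definition \ref{defBn} applied to $G$ pulls off the correct subtree first. In the red case this requires $\gr(m_2)$ to be smaller than every red-adjacent child of $\gr(m_1)$ already present in $G_1$; this is vacuous when $m_1$ is a letter or a blue bracket, and in the remaining case $m_1 = [m_1', m_3]$, condition (c) supplies exactly the inequality $\gr(m_2) < \gr(m_3)$, while the inductive identification of $G_1$ shows that $\gr(m_3)$ is indeed the smallest red-adjacent child of $\gr(m_1)$ inside $G_1$. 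The blue case is entirely symmetric: (d) restricts the shape of $m_2$, and (e) ensures that $\gr(m_1)$ exceeds the largest existing child of $\gr(m_2)$ in $G_2$.

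The main obstacle is the bookkeeping in the final verification: one must match each of (c), (d), (e) to the possible shapes that the inductive decomposition of $m_1$ or $m_2$ can take, and check that the privileged child chosen by the recursion in Definition \ref{defBn} is precisely the one used to glue the two subtrees. Once the subcases are enumerated, each hypothesis lines up with exactly one shape, and the induction closes.
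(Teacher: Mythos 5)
Your proof is correct and follows essentially the same route as the paper, which simply asserts that conditions b)--e) correspond to the recursive step (ii) of Definition \ref{defBn} and that the lemma follows by induction on $n$; you have supplied the case analysis the paper leaves to the reader. In particular, your matching of (c) to the minimality of the chosen red child, (d) to the persistence of all-blue edges at the root after removing $G_k$, and (e) to the ordering of successive blue children is exactly the intended correspondence.
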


\begin{proof}
One checks that the conditions b), c), d) and e) correspond to the recursive step (ii) in Definition \ref{defBn}. Using this, the lemma can be checked by induction on $n.$

%

\end{proof}

\subsection{$\cB_n(X)$ spans $\Lie_2(n)$}

We define an algorithm recursively that takes a monomial $m \in M_n$ as input, and expresses $m$ as a linear combination of monomials in $\cB_n(X).$ We show the algorithm below first, and then prove the algorithm will terminate on any monomial $m \in M_n.$

\subsection*{Algorithm LC}
\begin{enumerate}
\itm If $m$ is in $\cB_n(X),$ then  output $\LC(m) = m.$ Otherwise, $m \not\in \cB_n(X),$ and then $m$ must have the form $\{m_1, m_2\},$ where $\brk = \brkone$ or $\brktwo.$ Suppose $X_i$ is the set of letters in $m_i,$ for $i=1,2.$
\itm If $m_1 \not\in \cB_{|X_1|}(X_1)$ or $m_2 \not\in \cB_{|X_2|}(X_2),$ then run $\LC$ on $m_1$ and $m_2.$ Suppose we get $$\LC(m_1) = \sum_{b_{i} \in \cB_{|X_1|}(X_1)} \alpha_i b_i, \ \ \ \LC(m_2) = \sum_{b_{j}' \in \cB_{|X_2|}(X_2)} \beta_j b_j'.$$ Output
$$\LC(m) = \sum_{i,j} \alpha_i \beta_j \LC(\{b_i, b_j'\}).$$
\itm If $\gr(m_1) > \gr(m_2),$ output $\LC(m) = - \LC(\{m_2, m_1\}).$ 
\itm If the algorithm reaches this step. we must have $m \not\in \cB_n(X)$ and  $m$ satisfies condition b) in Lemma \ref{crtbss}. There are two more situations we need to deal with.
\begin{ilist}
\itm If $\brk = \brkone,$ then $m$ does not satisfy Lemma \ref{crtbss}/condition c). Hence $m_1 = [m_1', m_1''],$ so $m = [[m_1', m_1''], m_2]$ where $\gr(m_1'') < \gr(m_2).$ Note $m_1 \in \cB_{|X_1|}(X_1),$ so $\gr(m_1') < \gr(m_1'') < \gr(m_2).$ Output $$\LC(m) = \LC([[m_1', m_2], m_1'']) + \LC([m_1', [m_1'', m_2]])$$
\itm If $\brk = \brktwo,$ then $m$ does not satisfy either condition d) or c) of  Lemma \ref{crtbss}. Thus, $m_2$ has the form $[m_2', m_2'']$ or $\langle m_2', m_2''\rangle.$ In either case, one can conclude that $\gr(m_1) < \gr(m_2') < \gr(m_2'').$
 \begin{enumerate}
 \item If $m_2 = [m_2', m_2'']$, then $m = \langle m_1, [m_2', m_2'']\rangle.$ Output
\begin{multline*}
\LC(m) = - \LC(\langle [ m_1, m_2''], m_2' \rangle )+ \LC(\langle [m_1, m_2'], m_2''\rangle)  \\
 -\LC([m_1, \langle m_2', m_2'' \rangle] ) + \LC([m_2', \langle m_1, m_2'' \rangle ])+ \LC([\langle m_1, m_2' \rangle, m_2'']).
\end{multline*}
\item If  $m_2 = \langle m_2', m_2'' \rangle,$ then $m = \langle m_1, \langle m_2', m_2'' \rangle \rangle.$ Output $$\LC(m) =\LC( \langle m_2', \langle m_1, m_2'' \rangle \rangle) + \LC(\langle \langle m_1, m_2' \rangle, m_2'' \rangle ).$$
\end{enumerate}
\end{ilist}
\end{enumerate}

\begin{lem}\label{term}
Suppose $m$ is a monomial in $\Lie_2(n)$ satisfying condition b) in Lemma \ref{crtbss}, i.e., $m = \{m_1, m_2\}$ with $\gr(m_1) < \gr(m_2)$ and $m_i \in \cB_{|X_i|}(X_i),$ for $i = 1,2,$ where $\{ \cdot , \cdot \} = \brkone$ or $\brktwo,$ and $X_i$ is the set of letters in $m_i,$ for $i=1,2.$ Then we have the following results.

\begin{enumerate}
\itm If $\brk = \brkone,$ then $\LC(m)$ terminates. Furthermore, for any monomial $b \in \cB_n(X)$ appearing in $\LC(m)$ with nonzero coefficient,  we have $\gr(b) = \gr(m)$ and the outermost bracket of $b$ is $\brkone.$
\itm If $\brk = \brktwo,$ then $\LC(m)$ terminates. Furthermore, for any  monomial $b \in \cB_n(X)$ appearing in $\LC(m)$ with nonzero coefficient,  if the outermost bracket of $b$ is $\brktwo$ then $\gr(b) \ge \gr(m).$
\end{enumerate}
\end{lem}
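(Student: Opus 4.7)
I would prove parts (1) and (2) simultaneously by strong induction on $n$, with a secondary induction within each $n$ on a well-chosen complexity measure $\mu(m)$. The base cases $n\le 2$ are immediate, because every bracketed monomial on one or two letters is already in $\cB_n(X)$ up to a sign, so the algorithm terminates at Step (1) or Step (3) and both $\gr$ claims are trivial.

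For the inductive step on $n$, Steps (1)--(3) are easy: Step (1) outputs a single basis element, Step (2) invokes $\LC$ on strictly smaller monomials so is covered by the outer induction on $n$, and Step (3) introduces only a sign, then calls $\LC$ once more on a monomial satisfying the hypothesis with $[\cdot,\cdot]$ or $\langle\cdot,\cdot\rangle$ swapped. The real work lies in Step (4), where $m$ is rewritten as a sum of new monomials of the same size $n$ and $\LC$ is called on each; here the outer induction does not help directly, and I would use the secondary induction on $\mu(m)$ tailored so that each recursive call inside Step (4) lands on a monomial of strictly smaller $\mu$. A natural candidate is a lexicographic pair built from the shape of the binary tree underlying $m$, for example a depth-weighted count of internal nodes that violate conditions (b)--(e) of Lemma \ref{crtbss}; verifying that such a $\mu$ strictly decreases in each of Steps (4)(i), (4)(ii)(1), (4)(ii)(2) is the technical heart of the argument. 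I expect this is the principal obstacle, because crude measures like total size of top-level subtrees do not shrink uniformly (e.g.\ in $[m_1',[m_1'',m_2]]$ the right top factor is larger than the original right factor $m_2$), so $\mu$ must depend jointly on tree shape and on the positions of $\gr$.

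Once termination is secured, the $\gr$-assertions follow by tracing the identities $\gr([u,v])=\min\{\gr(u),\gr(v)\}$ and $\gr(\langle u,v\rangle)=\max\{\gr(u),\gr(v)\}$ through the output formulas and invoking the inductive hypothesis. For part (1), both summands of Step (4)(i) have outermost bracket $[\cdot,\cdot]$ and graphical root equal to $\gr(m_1')=\gr(m)$, so by induction every basis element each expands to is red-outer with graphical root $\gr(m)$. For part (2), in Step (4)(ii)(1) only the first two of the five summands have outer $\langle\cdot,\cdot\rangle$, with graphical roots $\gr(m_2')=\gr(m)$ and $\gr(m_2'')>\gr(m)$ respectively; the other three summands have outer $[\cdot,\cdot]$ and by part (1) of the inductive hypothesis produce only red-outer basis elements, so contribute nothing blue-outer. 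Step (4)(ii)(2) is handled identically using Jacobi for $\langle\cdot,\cdot\rangle$: both output summands are blue-outer with graphical root $\gr(m_2'')=\gr(m)$. Thus every blue-outer basis element $b$ in $\LC(m)$ satisfies $\gr(b)\ge\gr(m)$, completing part (2).
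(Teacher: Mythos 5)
Your skeleton matches the paper's: an outer induction on $n$, a secondary induction to handle Step (4), and then a case-by-case trace of the graphical roots through the rewriting formulas. Your $\gr$-bookkeeping in the last paragraph is essentially correct and agrees with what the paper does (in (4)(ii)(1) only the first two summands are blue-outer, with second-argument roots $\gr(m_2')=\gr(m)$ and $\gr(m_2'')>\gr(m)$; the other three reduce to part (1); in (4)(ii)(2) both summands are blue-outer with root $\gr(m_2'')=\gr(m)$).

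However, there is a genuine gap, and you flag it yourself: you never construct the measure $\mu$ that makes the secondary induction well-founded, and you explicitly defer "the technical heart of the argument." Your proposed candidate (a depth-weighted count of internal nodes violating conditions (b)--(e) of Lemma \ref{crtbss}) is not verified and is not what makes the argument go through. The paper's resolution is more specific and worth internalizing: for part (1) the secondary induction is on $\gr(m_2)$ \emph{increasing} from the base case $\gr(m_2)=x_2$; the point is that after first normalizing $[m_1',m_2]$ into basis elements $b_i$ via the outer induction on $n$ (each with $\gr(b_i)=\gr(m_1')$), every recursive call $\LC([b_i,m_1''])$ and $\LC([m_1',[m_1'',m_2]])$ produced by Step (4)(i) has a second argument with graphical root $\gr(m_1'')<\gr(m_2)$, so the measure strictly drops. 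For part (2) the secondary induction is a \emph{reverse} induction on $\gr(m_2)$ (base case $\gr(m_2)=x_n$), and this alone is not enough: in Step (4)(ii)(1) the summand $\langle[m_1,m_2''],m_2'\rangle$ has second-argument root $\gr(m_2')=\gr(m_2)$, unchanged. The paper therefore adds a third, innermost induction on $|X_2|$, the size of the supporting alphabet of the second argument, which does strictly decrease for that summand (and for $\langle[m_1,m_2'],m_2''\rangle$ when $m_2''$ is a letter or of the form $[\cdot,\cdot]$). Without exhibiting this lexicographic triple $(n,\ \pm\gr(m_2),\ |X_2|)$ and checking it decreases in each branch of Step (4), the termination claim --- and hence the whole lemma --- is not established, so the proposal as written does not constitute a proof.
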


\begin{proof}
We prove the lemma by induction on $n.$ The base case $n=1$ is clearly true. Assume the lemma is true when the size of the alphabet smaller than $n.$ Under this assumption, we prove separately that (1) and (2) hold when $|X| = n$.

\begin{itemize}
\itm If $m = [m_1, m_2],$ we prove (1) by induction on $\gr(m_2)$.
The smallest possibility for $\gr(m_2)$ is $x_2$ in which case $\gr(m_1) = x_1.$ Hence, if $m = [[m_1', m_1''], m_2],$ we must have $\gr(m_1'') > x_2 = \gr(m_2).$ Thus, $m \in \cB_n(X)$ and $\LC(m) = m,$ so (1) holds. Now we assume (1) holds for the cases where $\gr(m_2) < x_k.$ Suppose $\gr(m_2) = x_k.$ It is clear we only need to consider the case that $m \not\in \cB_n(X).$ In this case, $m$ has the form $[[m_1', m_1''], m_2],$ where $\gr(m_1') < \gr(m_1'') < \gr(m_2).$ (Note since $m_1 = [m_1', m_1'']$ is in $\cB_n(X),$ we must have that $m_1' \in \cB(X_1')$ and $m_1'' \in \cB(X_1''),$ where $X_1'$ and $X_1''$ are the corresponding alphabets.) By how the algorithm $\LC$ is designed, we will use the formula in step (4)/(i), so it suffices to show that both $\LC([[m_1', m_2], m_1''])$ and $\LC([m_1', [m_1'', m_2]])$ terminate, and for each monomial $b$ appearing in the output, $b$ satisfies $\gr(b) = \gr(m)=\gr(m_1')$ and the outermost bracket of $b$ is $\brkone.$  

We only show it for $\LC([[m_1', m_2], m_1''])$, since a similar argument applies to $\LC([m_1', [m_1'', m_2]]).$ Since the size of the supporting alphabet of $[m_1', m_2]$ is smaller than $n,$ we can apply the induction hypothesis to get $\LC([m_1', m_2]) = \sum_{i} \alpha_i b_i,$ where all the $b_i$ appearing in the linear combination satisfy $\gr(b_i) = \gr([m_1', m_2]) = \gr(m_1').$
Using step (2) in the algorithm, $\LC([[m_1', m_2], m_1'']) = \sum_i \alpha_i \LC([b_i, m_1'']).$ 
For each $b_i,$ $\gr(b_i) = \gr(m_1') < \gr(m_1''),$ so $\gr([b_i, m_1'']) = \gr(m_1').$ Hence $[b_i, m_1'']$ satisfies the hypothesis in the Lemma with $\gr(m_1'') < \gr(m_2) = x_k.$ By the induction hypothesis, $ \LC([b_i, m_1'']),$ and therefore $\LC([[m_1', m_2], m_1'']),$ terminates and outputs a linear combination of  monomials satisfying the desired properties.

\itm If $m = \langle m_1, m_2 \rangle,$ we prove (2) by reverse induction on $\gr(m_1).$ The highest possible value for $\gr(m_1)$ is $x_{n-1},$ in which case $\gr(m_2) = x_n.$ Similarly to the case when $m=[m_1, m_2],$ one shows $m \in \cB_n(X)$ and so $\LC(m) =m.$ Thus, (2) holds. Now assume (2) holds whenever $\gr(m_2) > x_k.$ Suppose $\gr(m_2) = x_k.$ We discuss two possibilities for $m_2.$
\begin{ilist}
\itm If $m_2 = \langle m_2', m_2'' \rangle,$ we can show (2) holds by similar arguments to those we used for the cases where $m =[m_1, m_2].$
\itm If $m_2$ is a single letter or $m_2 = [m_2', m_2''],$ we use another level of induction on the size of $X_2,$ the supporting alphabet of $m_2.$ If $|X_2| = 1,$ then $m_2$ is a single letter. Thus, $m \in \cB_n(X)$ and $\LC(m) = m,$ so (2) holds. Assume (2) holds whenever $|X_2| < \ell$. Suppose $|X_2| = \ell (>1);$ then $m_2$ has the form $[m_2', m_2''].$ We have $\gr(m_1) < \gr(m_2) = \gr(m_2') < \gr(m_2'').$ For this case, we will use the formula in step(4)/(ii)/(a), so it remains to show that  $\LC(\langle [ m_1, m_2''], m_2' \rangle )$, $\LC(\langle [m_1, m_2'], m_2''\rangle)$,
$\LC([m_1, \langle m_2', m_2'' \rangle] ),$ $\LC([m_2', \langle m_1, m_2'' \rangle ])$ and $\LC([\langle m_1, m_2' \rangle, m_2''])$ all terminate with output having the desired properties.
However, the outermost brackets of the last three monomials are $\brkone.$ Thus, using step (2) of the algorithm, they all become linear combinations of monomials of the form $[m_3, m_4],$ which we have already showed will terminate under $\LC,$ and all the monomials appearing in the output satisfy the desired properties in (1), which leads to the desired properties in (2). Therefore, we only need to check $\LC(\langle [ m_1, m_2''], m_2' \rangle )$ and $\LC(\langle [m_1, m_2'], m_2''\rangle).$ 

We only check $\LC(\langle [m_1, m_2'], m_2''\rangle);$ similar arguments would hold for $\LC(\langle [ m_1, m_2''], m_2' \rangle ).$ First, we have $$\gr (\langle [m_1, m_2'], m_2''\rangle) = \gr(m_2'') \ge \gr(m_2' ) = \gr(m).$$ If $m_2''$ has the form $\langle m_3, m_4\rangle,$ i.e., case (i), we have shown $\LC$ will terminate and for each monomial $b$ in the output, if the outermost bracket is $\brktwo,$ then $\gr(b) \ge \gr(\langle [m_1, m_2'], m_2''\rangle) \ge \gr(m).$ If $m_2''$ is a letter or $m_2'' = [m_3, m_4],$ then since the supporting alphabet of $m_2''$ is a proper subset of $X_2,$ it is strictly smaller than $|X_2| = \ell.$ Therefore, we can apply the induction hypothesis to get the desired result.
\end{ilist}

\end{itemize}
\end{proof}

\begin{lem}

For any monomial $m \in \Lie_2(n),$ $\LC(m)$ terminates and the output expresses $m$ as a linear combination of monomials in $\cB_n(X).$
\end{lem}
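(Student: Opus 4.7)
The plan is to perform a straightforward induction on $n = |X|,$ with the nontrivial termination work outsourced to Lemma \ref{term}. The base case $n = 1$ is immediate: the sole monomial $x$ already lies in $\cB_1(X),$ so step (1) of the algorithm outputs $x$ at once.

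For the inductive step I would assume the statement for all alphabets of size strictly less than $n$ and consider an arbitrary monomial $m \in \Lie_2(n).$ If $m \in \cB_n(X),$ step (1) terminates immediately. Otherwise $m = \{m_1, m_2\}$ for some bracket $\brk = \brkone$ or $\brktwo,$ and the supporting alphabets $X_1$ and $X_2$ of $m_1$ and $m_2$ each have size strictly less than $n.$ By the inductive hypothesis, $\LC(m_1)$ and $\LC(m_2)$ terminate with outputs $\sum_i \alpha_i b_i$ and $\sum_j \beta_j b_j'$ lying in the spans of $\cB_{|X_1|}(X_1)$ and $\cB_{|X_2|}(X_2)$ respectively. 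Step (2) then reduces the termination of $\LC(m)$ to the termination of each $\LC(\{b_i, b_j'\}).$

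For each such pair, the graphical roots $\gr(b_i)$ and $\gr(b_j')$ lie in the disjoint alphabets $X_1$ and $X_2,$ so they are distinct and comparable. If $\gr(b_i) > \gr(b_j'),$ step (3) rewrites the call as $-\LC(\{b_j', b_i\}),$ so I may assume $\gr(b_i) < \gr(b_j').$ At that point $\{b_i, b_j'\}$ satisfies hypothesis (b) of Lemma \ref{crtbss} exactly, and Lemma \ref{term} guarantees that $\LC(\{b_i, b_j'\})$ terminates and outputs a linear combination of elements of $\cB_n(X).$ Assembling these outputs via the linearity in step (2) gives a terminating computation of $\LC(m)$ as a linear combination of elements of $\cB_n(X).$

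The main subtlety, such as it is, is recognising that the outer induction on $n$ does not by itself dispose of step (2): the call $\LC(\{b_i, b_j'\})$ still involves the full alphabet $X,$ so it cannot be handled by the inductive hypothesis on alphabet size. Lemma \ref{term} is designed precisely to fill this gap by handling monomials of the form $\{b_1, b_2\}$ whose two factors are already in their respective bases. Once that lemma is in hand, the present statement is essentially a bookkeeping corollary.
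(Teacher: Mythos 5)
Your argument is essentially the paper's: an outer induction on $n$, reduction via step (2) of the algorithm to calls of the form $\LC(\{b_i,b_j'\})$ with both factors already in their respective bases, normalization via step (3), and then an appeal to Lemma \ref{term} for the hard termination case. That part is correct, and you correctly identify why the induction on alphabet size alone does not suffice and why Lemma \ref{term} is the missing ingredient.

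However, the statement has two halves, and you only prove one of them. Besides termination, the lemma asserts that the output \emph{expresses} $m$, i.e.\ that the linear combination returned by $\LC$ is equal to $m$ as an element of $\Lie_2(n)$. Lemma \ref{term} gives you no help here: it only guarantees termination and describes the graphical roots and outermost brackets of the monomials in the output, not that the output equals the input. To close this you must observe that every rewriting rule in the algorithm is a valid identity in $\Lie_2(n)$: step (2) uses bilinearity, step (3) uses antisymmetry (S1)/(S2), step (4)(i) and (4)(ii)(b) use the Jacobi identities (J1)/(J2) together with antisymmetry, and step (4)(ii)(a) uses the mixed Jacobi identity (MJ). The paper dispatches this with a single sentence, and the check is routine, but it is a genuine part of the claim that your write-up omits entirely; your final sentence ``assembling these outputs via the linearity in step (2)'' silently assumes that each $\LC(\{b_i,b_j'\})$ equals $\{b_i,b_j'\}$, which is exactly the point needing justification.
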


\begin{proof}
First, as we discussed inside the algorithm, the steps (1)-(4) include all possible situations. Hence, it won't happen that the algorithm becomes stuck without giving output. 
Then, the statement of termination can be proved by induction on $n$ and using Lemma \ref{term}. 

It is left to show the linear combination of monomials $\LC(m)$ output is equal to $m.$ This can be done by checking all the formulas involved agree with the properties of the operations: bilinearity, antisymmetry, Jacobi identity and the compatibility of two brackets. 

\end{proof}

We have shown that our algorithm $\LC$ works. Hence, we conclude:
\begin{prop}\label{liespan}
$\cB_n(X)$ spans $\Lie_2(n).$
\end{prop}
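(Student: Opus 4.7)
The plan is to exploit the machinery already developed in this section. By definition, $\Lie_2(n)$ is the quotient of the free $R$-module on $M_n$ by the submodule generated by instances of the relations (S1), (S2), (J1), (J2), (MJ), so $M_n$ spans $\Lie_2(n)$. It therefore suffices to show that every $m \in M_n$ lies in the $R$-span of $\cB_n(X)$.

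First I would simply apply Algorithm $\LC$ to $m$. The lemma immediately preceding the proposition already establishes that $\LC$ terminates on every input from $M_n$ and outputs a formal $R$-linear combination of elements of $\cB_n(X)$. The remaining task is to confirm that the output equals $m$ as an element of $\Lie_2(n)$, which I would do by induction on the depth of the recursion. The verification is mechanical: step (2) is bilinearity; step (3) is antisymmetry (S1) or (S2) depending on the outer bracket; step (4)(i) rearranges the Jacobi identity (J1) into $[[a,b],c] = [[a,c],b] + [a,[b,c]]$; step (4)(ii)(a) solves the mixed Jacobi identity (MJ) for $\langle m_1, [m_2', m_2'']\rangle$ after substituting $(x,y,z) = (m_1, m_2', m_2'')$; and step (4)(ii)(b) is (J2). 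Each rewriting is a valid identity in $\Lie_2(n)$, so inductively $\LC(m) = m$.

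The real content sits in establishing termination, which is handled by Lemma \ref{term}. The difficulty is that no crude termination measure works: step (4)(ii)(a) takes a monomial with outer bracket $\brktwo$ and produces, among other summands, $\langle [m_1, m_2''], m_2'\rangle$ and $\langle [m_1, m_2'], m_2''\rangle$, both still with outer bracket $\brktwo$, so neither the total degree nor the count of $\brktwo$-brackets decreases. The remedy is the nested induction used in Lemma \ref{term}: first on $n$, then in the $\brkone$ case an induction on $\gr(m_2)$, and in the $\brktwo$ case a reverse induction on $\gr(m_1)$ together with an inner induction on $|X_2|$. The extra information tracked there, that any $b \in \cB_n(X)$ appearing in the output with outer bracket $\brktwo$ has $\gr(b) \geq \gr(m)$, is exactly what is needed to feed the reverse induction and prevent the process from looping. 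Once Lemma \ref{term} is in hand the proposition is immediate: every monomial in $M_n$ lies in the span of $\cB_n(X)$, so $\cB_n(X)$ spans $\Lie_2(n)$.
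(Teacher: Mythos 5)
Your proposal is correct and follows essentially the same route as the paper: the proposition is deduced directly from Algorithm $\LC$ together with the termination argument of Lemma \ref{term} and the verification that each rewriting step of $\LC$ is an instance of bilinearity, (S1)/(S2), (J1), (J2), or (MJ). Your identification of the nested induction in Lemma \ref{term} as the real content, and your explicit matching of step (4)(ii)(a) to the mixed Jacobi identity, are consistent with (and slightly more detailed than) what the paper records.
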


\section{The first proof of Theorem \ref{main1}}

In this section, we will use the following two lemmas suggested by Brian Osserman to prove $\Lie_2(n)$ is isomorphic to a free $R$-module of rank $n^{n-1},$ thus conclude Theorem \ref{main1}. Because we will give another proof of Theorem \ref{main1} in later sections, we only present the idea and give a partial proof. The reason we include this section is that the idea we use here does not require defining or using new objects and we believe it is easier to apply this idea more generally in similar situations, when one wants to prove a basis candidate is indeed a basis provided we know a way to write any element in the module as a linear combination of the elements in the basis candidate.

\begin{lem}\label{bbo1}
Let $U, V, W$ be three $R$-modules. Suppose there exist homomorphisms $f: U \to V,$ $g: U \to W$ and $h: W \to V$ satisfying the following conditions.
\begin{ilist}
\itm $f, g$ and $h$ are surjective.
\itm $f =  h \circ g.$
\itm $\ker(f) \subset \ker(g).$
\end{ilist}
Then $V \cong W.$
\end{lem}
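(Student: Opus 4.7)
The plan is to show that the map $h\colon W \to V$ is itself the desired isomorphism. Surjectivity of $h$ is given for free by hypothesis (i), so the entire content of the lemma reduces to proving that $h$ is injective.

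For injectivity, I would chase the diagram. Take any $w \in \ker(h)$. By surjectivity of $g$, there exists $u \in U$ with $g(u) = w$. Applying $h$ and using hypothesis (ii), we get
\[
f(u) \;=\; h(g(u)) \;=\; h(w) \;=\; 0,
\]
so $u \in \ker(f)$. By hypothesis (iii), $\ker(f) \subset \ker(g)$, hence $g(u) = 0$, i.e.\ $w = 0$. This shows $\ker(h) = 0$, and combined with surjectivity we conclude $h\colon W \risom V$.

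There is no real obstacle here: the lemma is purely formal, and every hypothesis is used in a transparent way. Hypothesis (i) provides surjectivity of $h$ (and the surjectivity of $g$ that lets us lift $w$ to $U$); hypothesis (ii) lets us translate $h(w) = 0$ into $f(u) = 0$; and hypothesis (iii) converts this into $g(u) = 0$. Note that surjectivity of $f$ is not actually needed in the proof itself, but it is natural in the intended application (where $f$ is a surjection onto the free module one is trying to identify, $g$ is the spanning map from a presentation, and $h$ is the candidate isomorphism between a basis-indexed free module and the target).
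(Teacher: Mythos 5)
Your proof is correct and follows exactly the same route as the paper: reduce to injectivity of $h$, lift $w \in \ker(h)$ to $u \in U$ via surjectivity of $g$, use $f = h \circ g$ to get $u \in \ker(f)$, and conclude $w = g(u) = 0$ from $\ker(f) \subset \ker(g)$. Your side observation that surjectivity of $f$ is never used is accurate but does not change the argument.
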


\begin{proof}
It is enough to show that $h$ is injective, i.e., $\ker(h) = 0.$ Suppose we have $x \in W,$ such that $h(x) = 0.$ Since $g$ is surjective, there exists $y \in U,$ such that $g(y) = x.$ Then we have $f(y) = h(g(y)) = h(x) = 0.$ Thus, $y \in \ker(f).$ Since $\ker(f) \subset \ker(g),$ we have $x= g(y) = 0.$ 
\end{proof}

This lemma gives one way to verify whether a set of elements spans a module is a basis.

\begin{lem}\label{bbo2}
Suppose $V$ is an $R$-module spanned by a set of elements $M = \{m_1, \dots, m_\ell\}$ with relations given by the set $Rel.$ Let $\cB=\{b_1, \dots, b_k\}$ be a subset of $M$ such that $\cB$ spans $V$. In particular, for any element $m_i$ of $M,$ it can be written as linear combination of $\cB.$
Although there might be multiple ways to write $m_i$, we fix one of them:  $$m_i = \sum_{j=1}^k \alpha_{i,j} b_j = (\alpha_{i,1}, \dots, \alpha_{i,k}) \cdot (b_1, \dots, b_k)^T.$$
 We write $$\alpha_i =  (\alpha_{i,1}, \dots, \alpha_{i,k}) \in R^k.$$
If for any relation $r$ in $Rel,$ we have
\begin{equation}\label{rel}
r \mbox{ is } \sum_j \gamma_j m_j = 0 \ \Longrightarrow \ \sum_j \gamma_j \alpha_j = 0,
\end{equation}
then $V$ is free of rank $k.$ Furthermore, $\cB$ is a basis for $V.$
\end{lem}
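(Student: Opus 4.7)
The plan is to invoke Lemma \ref{bbo1} with suitable free modules. Let $U = R^\ell$ be the free $R$-module on generators $e_1, \dots, e_\ell$ corresponding to $M$, let $W = R^k$ be the free $R$-module on generators $e_1^*, \dots, e_k^*$ corresponding to $\cB$, and define $f\colon U \to V$, $g\colon U \to W$, and $h\colon W \to V$ by $f(e_i) = m_i$, $g(e_i) = \sum_{j=1}^k \alpha_{i,j}\, e_j^*$, and $h(e_j^*) = b_j$. Surjectivity of $f$ and $h$ is immediate from the spanning hypotheses on $M$ and $\cB$. For the surjectivity of $g$, observe that since $\cB \subset M$, each $b_j$ equals some $m_{i(j)}$, and we are free to take the tautological fixed expression $m_{i(j)} = 1 \cdot b_j$; this yields $g(e_{i(j)}) = e_j^*$, so the image of $g$ contains every standard basis vector of $W$. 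Finally, the factorization $f = h \circ g$ holds by construction, since $h(g(e_i)) = \sum_j \alpha_{i,j}\, b_j = m_i = f(e_i)$.

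The main step, and the one where hypothesis \eqref{rel} is used, is the inclusion $\ker(f) \subset \ker(g)$. Because $V$ is presented as the quotient of $U$ by the submodule $K$ generated by the elements $\sum_j \gamma_j e_j$ associated to the defining relations $\sum_j \gamma_j m_j = 0$ in $Rel$, we have $\ker(f) = K$. It therefore suffices to check that each such generator of $K$ lies in $\ker(g)$: but $g\bigl(\sum_j \gamma_j e_j\bigr) = \sum_j \gamma_j \alpha_j$, which vanishes precisely by hypothesis \eqref{rel}. There is no real obstacle here, as the hypothesis on $Rel$ is tailor-made for this containment; the only mild subtlety is ensuring that we make the tautological choice of fixed expression for each $b_j \in \cB$ so that $g$ is surjective.

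Having verified all three conditions, Lemma \ref{bbo1} yields that $h\colon W \to V$ is an isomorphism. Since $W = R^k$ is free of rank $k$ with basis $\{e_1^*, \dots, e_k^*\}$ and $h$ carries this basis bijectively onto $\cB$, we conclude simultaneously that $V$ is free of rank $k$ and that $\cB$ is a basis for $V$, as desired.
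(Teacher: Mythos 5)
Your proof is correct and follows essentially the same route as the paper: both invoke Lemma \ref{bbo1} with the identical free modules $U \cong R^\ell$, $W \cong R^k$ and the same maps $f$, $g$, $h$, verify surjectivity, the factorization $f = h\circ g$, and $\ker(f)\subset\ker(g)$ via hypothesis \eqref{rel}. Your only addition is the explicit remark that one should fix the tautological expression $b_j = 1\cdot b_j$ to guarantee surjectivity of $g$, a point the paper passes over with ``it is clear''; this is a reasonable clarification and consistent with how the lemma is actually applied (the algorithm $\LC$ outputs $b$ itself on inputs $b\in\cB_n(X)$).
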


\begin{proof} 
We do the following setup.
\begin{itemize}
\itm Let $U$ be a free $R$-module of rank $\ell$ with a basis $\{u_1, \dots, u_\ell \}.$ 
\itm Let $W$ be a free $R$-module of rank $k$ with a basis $\{ w_1, \dots, w_k \}.$ 
\itm Let $f$ be the homomorphism from $U$ to $V$ obtained by mapping each $u_i$ to $m_i$ in $M.$
\itm Let $g$ be the homomorphism from $U$ to $W$ obtained by mapping each $u_i$ to $$g(u_i) = \alpha_i \cdot (w_1, \dots, w_k)^T = \sum_j \alpha_{i,j} w_j.$$
\itm Let $h$ be the homomorphism from $W$ to $V$ by mapping each $w_i$ to $b_i.$
\end{itemize}
If $f, g$ and $h$ satisfy all the conditions listed in Lemma \ref{bbo1}, then we have $V \cong W,$ which implies $rank(V) = rank(W) = k.$ Since $\cB$ has cardinality $k$ and spans $V,$ $\cB$ is a basis for $V.$ Therefore, our goal is to verify the three conditions in Lemma \ref{bbo1}.

It is clear that $f, g$ and $h$ are all surjections. For any $u_i,$ 
$$h(g(u_i)) = h( \sum_j \alpha_{i,j} w_j) =  \sum_j \alpha_{i,j} b_j = m_i = f(u_i).$$
Hence, $f = h \circ g.$ 
\begin{eqnarray*}
\sum_j \gamma_j u_j \in \ker(f) &\Rightarrow& \sum_j \gamma_j m_j = f(\sum_j \gamma_j u_j) =  0 \in Rel \\
&\Rightarrow& \sum_j \gamma_j \alpha_j = 0 \\
&\Rightarrow& g(\sum_j \gamma_j u_j) = \sum_j \gamma_j \alpha_j (w_1, \dots, w_k)^T = 0 \\
&\Rightarrow& \sum_j \gamma_j u_j \in \ker(g).
\end{eqnarray*}
Therefore, $\ker(f) \subset \ker(g).$

\end{proof}

\begin{rem}\label{bbo3}
Note that if we have a set of relations $\{r_i\}$ that satisfy \eqref{rel}, then any finite linear combinations of $r_i$'s satisfy \eqref{rel} as well. Therefore, If $Rel$ is an $R$-module generated by a set of relations $\{r_i\},$ such that for each $r \in \{r_i\},$ $r$ satisfies \eqref{rel}, then we can make the conclusions in Lemma \ref{bbo2}.
\end{rem}

We will use Lemma \ref{bbo2} (and this Remark) to prove Theorem \ref{main1}. We first describe the elements that generate the relation set for our problem.
\begin{defn}We say a relation is {\it of the form (S1)} if it can be written as 
$$[x, y] + [y,x] = 0, \mbox{or a multiple of it, e.g., } [z, \langle [x,y], w \rangle] + [z, \langle [y,x], w \rangle],$$
for some $x, y$ (and $z, w$).

Similarly, we define relations that are {\it of the forms (S2), (J1), (J2) and (MJ)}.
\end{defn}

In our problem, $V = \Lie_2(n)$ is an $R$-module spanned by the set of all monomials $M_n$ of $\Lie_2(n)$ with a set of relations $Rel_n,$ which are generated by relations that are of the forms (S1), (S2), (J1), (J2) and (MJ). $\cB_n(X)$ spans $\Lie_2(n).$ In particular, the algorithm $\LC$ gives one way to write any monomial $m \in M_n$ as a linear combinations of $b_G \in \cB_n(X).$ We naturally define $\alpha$ in terms of $\LC.$ For any $m \in M_n,$ if $\LC(m) = \sum_{i,T} \alpha_G b_G,$ let $$\alpha_m = (\alpha_{G} \ | \ G \in \overline{\cG}_n).$$
Note that we can fix an ordering of graphs in $\overline{\cG}_n = \cR_n,$ and consider $\alpha_m$ as a vector in $R^{n^{n-1}}.$

\begin{lem}\label{ind}
For any relation $\sum \gamma_m m = 0$ of monomials in $\Lie_2(n)$ of the form (S1), (S2), (J1), (J2), or (MJ). we have $\sum \gamma_m \alpha_m = 0.$
\end{lem}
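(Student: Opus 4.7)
The plan is to view the algorithm $\LC$ as inducing an $R$-linear map from the free $R$-module $R\langle M_n\rangle$ on monomials to the free $R$-module $R\langle\cB_n(X)\rangle$ on the basis candidate, and to show that every generator of $Rel_n$ lies in its kernel; by Remark~\ref{bbo3} this is exactly what Lemma~\ref{ind} asserts. A generator of $Rel_n$ has the form $F[\rho]$, where $\rho$ is an instance of one of (S1)--(MJ) applied to sub-monomials $m_1, m_2, m_3$ and $F[-]$ is an ambient monomial context with a single hole.

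I would argue by induction on the depth of $F$ (the number of outer brackets around the hole). For the inductive step, suppose $F$ has the form $\{F', m_0\}$ (the case $\{m_0, F'\}$ is analogous), so a generator reads $\sum_i \gamma_i \{F'[e_i], m_0\}$ where $\sum_i \gamma_i F'[e_i] = 0$ is a strictly lower-depth generator. Step~(2) of the algorithm writes
\[
\LC(\{F'[e_i], m_0\}) \;=\; \sum_{j,k} \beta_{k,i}\, \alpha_j \, \LC(\{b_k, b_j'\}),
\]
with $\LC(F'[e_i]) = \sum_k \beta_{k,i} b_k$ and $\LC(m_0) = \sum_j \alpha_j b_j'$. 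Summing with coefficients $\gamma_i$ and invoking the inductive hypothesis ($\sum_i \gamma_i \beta_{k,i} = 0$ for every $k$) kills the whole sum.

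For the base case (depth zero), $F$ is trivial and the relation is one of (S1)--(MJ) at top level. One more application of step~(2) (combined with an outer induction on $n$) reduces to the subcase $m_i \in \cB_{|X_i|}(X_i)$. Then (S1) and (S2) are handled directly by step~(3): once we orient so that $\gr(m_a) < \gr(m_b)$, the swap formula yields $\LC(\{m_a, m_b\}) + \LC(\{m_b, m_a\}) = 0$. The axioms (J1), (J2), and (MJ) are handled by steps (4)(i), (4)(ii)(b), and (4)(ii)(a), respectively: each of these rewriting rules is, by construction, a verbatim transcription of the corresponding Jacobi-type identity (combined with at most two uses of (S1)/(S2) to put smaller-graphical-root arguments on the left). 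For example, step~(4)(i) under $\gr(m_1') < \gr(m_1'') < \gr(m_2)$ encodes
\[
\LC([[m_1', m_1''], m_2]) - \LC([[m_1', m_2], m_1'']) - \LC([m_1', [m_1'', m_2]]) = 0,
\]
which is precisely (J1) at $(m_1', m_1'', m_2)$ after two applications of (S1).

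The main obstacle will be the (MJ) verification: the formula in step~(4)(ii)(a) has five terms on the right, whereas (MJ) has six summands. Matching them requires repeatedly applying (S1)/(S2) to each (MJ) summand so that every bracket has its smaller-graphical-root argument first; one normalized summand then becomes the left-hand side of step~(4)(ii)(a) and the other five line up with the right-hand side, with the signs shown. This sign and permutation bookkeeping is the delicate step, but once it is carried out, the rest of the argument is a routine combination of the inductive reduction above.
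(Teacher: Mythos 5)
Your overall strategy --- peel off the ambient context with step (2) of $\LC$ by induction, then verify each axiom at top level against the corresponding rewriting rule of the algorithm --- is the same as the paper's, and your treatment of the context reduction and of (S1)/(S2) via step (3) reproduces the paper's cases (i)--(iii) for relations of the form (S1). Note, though, that the paper only carries out the (S1) case and explicitly defers (S2), (J1), (J2), (MJ) as requiring ``more complicated arguments''; it is exactly in those deferred cases that your proposal has a genuine gap.

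The gap is in the claim that (J1), (J2), (MJ) ``are handled by steps (4)(i), (4)(ii)(b), (4)(ii)(a).'' Each of those steps fires only when the input satisfies condition b) of Lemma \ref{crtbss}, i.e.\ when both immediate arguments are already in the relevant $\cB$; in particular, step (4)(i) applies to $[[m_1',m_1''],m_2]$ only when $[m_1',m_1'']$ is itself a basis monomial. After you reduce a (J1) generator $[a,[b,c]]+[b,[c,a]]+[c,[a,b]]$ to the case $a,b,c\in\cB$ with $\gr(a)<\gr(b)<\gr(c)$ and normalize by (S1) to $[[a,b],c]-[[a,c],b]-[a,[b,c]]$, there is no guarantee that $[a,b]\in\cB$: for instance with $a=[x_1,x_2]$, $b=x_3$, $c=x_4$, the monomial $[a,b]=[[x_1,x_2],x_3]$ violates condition c) of Lemma \ref{crtbss}. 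In that situation $\LC([[a,b],c])$ first passes through step (2), replacing $[a,b]$ by its expansion $\sum_l\delta_l d_l$, while $[[a,c],b]$ and $[a,[b,c]]$ are rewritten along entirely different paths; the required identity $\alpha_{[[a,b],c]}=\alpha_{[[a,c],b]}+\alpha_{[a,[b,c]]}$ is then not a single application of step (4)(i) but a confluence statement about divergent reduction sequences, which needs its own induction (on alphabet sizes and graphical roots, in the style of the proof of Lemma \ref{term}). The same issue arises for (J2) and (MJ), where $\langle m_2',m_2''\rangle$ or $[m_2',m_2'']$ need not lie in $\cB$. So the delicate point is not merely the sign and permutation bookkeeping for the six terms of (MJ), and the ``routine'' part of your base case is precisely the part that remains unproved.
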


Given this lemma, Lemma \ref{bbo2}, together with Remark \ref{bbo3}, implies Theorem \ref{main1} and that $\cB_n(X)$ is a basis for $\Lie_2(n).$

Hence, it is left to prove Lemma \ref{ind}. As we mentioned at the beginning of the section, we will only give a partial proof of Lemma \ref{ind}. We prove the case when the relation is of the form (S1). In fact, for all other cases, one can argue similarly but with more complicated arguments. 

\begin{proof}[(Incomplete) proof of Lemma \ref{ind}] 
We prove the lemma by induction on $n,$ the cardinality of the alphabet $X.$ The base case $n=1$ is trivial. Now assume the lemma is true when the size of the alphabet is smaller than $n (\ge 2).$ Suppose $|X| = n.$

If the relation $\sum \gamma_m m = 0$ is of the form (S1), then there are three possible cases:
\begin{ilist}
\itm $\{ m_1, m_2 \} + \{ m_1', m_2 \} = 0,$ where $m_1 + m_1' = 0$ is a relation of the form (S1) for corresponding alphabet $X_1.$ 
\itm $\{ m_1, m_2 \} + \{ m_1, m_2' \} = 0,$ where $m_2 + m_2' = 0$ is a relation of the form (S1) for corresponding alphabet $X_2.$
\itm $[m_1, m_2] + [m_2, m_1] = 0.$ 
\end{ilist}
In case (i), suppose 
$\LC(m_1) = \sum_{b_i \in \cB(X_1)} \alpha_i b_i, \LC(m_1'
) = \sum_{b_i \in \cB(X_1)} \alpha_i' b_i,$ and $\LC(m_2) = \sum_{b_i \in \cB(X_2)} \beta_j b_j.$
Hence, $\alpha_{m_1} = (\alpha_1, \dots, \alpha_{|\cB(X_1)|})$ and $\alpha_{m_1'} = (\alpha_1', \dots, \alpha_{|\cB(X_1)|}').$ By the induction hypothesis,
$$\alpha_{m_1} + \alpha_{m_1'} = 0 \Rightarrow \alpha_i + \alpha_i' = 0, \forall i.$$
According to step (2) of the algorithm $\LC,$ 
\begin{eqnarray*}
\LC(\{ m_1, m_2 \}) &=& \sum_{i,j} \alpha_i \beta_j \LC(\{b_i, b_j  \}), \\
\LC(\{ m_1', m_2 \}) &=& \sum_{i,j} \alpha_i' \beta_j \LC(\{b_i, b_j  \}) = -\sum_{i,j} \alpha_i \beta_j \LC(\{b_i, b_j  \}).
\end{eqnarray*}
Therefore, the coefficients in $\LC(\{ m_1, m_2 \})$ are exactly the negative of those in $\LC(\{ m_1', m_2 \}).$ Thus, $\alpha_{ \{m_1, m_2\} } + \alpha_{ \{m_1', m_2\} } = 0.$

Case (ii) can be showed similarly to case (i). 

In case (iii), suppose $\LC(m_1) = \sum_{b_i \in \cB(X_1)} \alpha_i b_i,$ and $\LC(m_2) = \sum_{b_i \in \cB(X_2)} \beta_j b_j.$ Again, according to step (2) of $\LC,$
\begin{eqnarray*}
\LC([m_1, m_2]) &=&  \sum_{i,j} \alpha_i \beta_j \LC([b_i, b_j  ]), \\
\LC([m_2, m_1]) &=&  \sum_{i,j} \alpha_i \beta_j \LC([b_j, b_i  ]).
\end{eqnarray*}
If we continue to step (3) of $\LC,$ depending on whether $\gr(b_i)$ is bigger or smaller than $\gr(b_j),$ one of $[b_i, b_j]$ and $[b_j, b_i]$ is going to be changed to the negative of the other. Therefore,  the coefficients in $\LC([m_1, m_2])$ are exactly the negative of those in $\LC([m_2, m_1]).$ Thus, $\alpha_{ [m_1, m_2] } + \alpha_{ [m_2, m_1] } = 0.$

\end{proof}

\section{Directed colored trees and plane binary trees}
One common way to show a set of elements spanning a module is a basis is to find another module and find a perfect pairing between them. Because our basis candidate is built from $\cR_n = \overline{\cG}_n,$ where $\overline{\cG}_n$ is the set of all two-colored trees in $\cG_n$ avoiding patterns $1r3r2, 2b1b3$ and $1r2b3,$ a natural object to use is the two-colored graphs. For convenience in defining the pairing, we add orientations onto the edges of the graphs.

\begin{defn}
An {\it oriented two-colored graph} is a two-colored graph whose edges have directions. We denote by $\OG_n$ the set of all oriented two-colored graphs on $X.$

We call an edge $i \to j$ {\it consistent} if $i < j$ and the color is red, or $i > j$ and the color is blue; and {\it inconsistent} otherwise.

An oriented two-colored graph is {\it consistent} if all of its edges are consistent.
\end{defn}

Although we can define pairing between a oriented two-colored tree in $\OG_n$ and a monomial in $\Lie_2(n)$ directly, it is easier to do so if we convert monomials in $\Lie_2(n)$ into combinatorial objects.

\begin{defn}
A {\it binary tree} is an ordered (rooted) tree, where all of its internal vertices, (i.e., vertices that are not leaves,) have exactly two children. See the Appendix of \cite{ec1} for a precise definition.

A {\it $2$v-colored binary tree} is a binary tree whose internal vertices are colored by red or blue. We denote by $\BT_n$ the set of all $2$v-colored binary trees whose leaves are labeled by $X.$

\end{defn}
One checks for any 2v-colored binary tree in $\BT_n,$ since the number of leaves is $|X|=n,$ it has exactly $n-1$ internal vertices.

\begin{rem}
Recall $M_n$ is the set of all the monomials in $\Lie_2(n).$
There is a canonical bijection between $\BT_n$ and $M_n:$ given a $2$v-colored binary tree, each leaf denotes a letter in $X,$ and we can construct a monomial in $M_n$ recursively by interpreting each internal vertex as a bracket of the left and right subtrees, with red vertices corresponding to $\brkone$ and blue vertices corresponding to $\brktwo.$

Because of this natural correspondence, we can consider $\cB_n(X)$ a subset of $\BT_n.$ In the rest of the paper, when we refer to $b_G$ as an element of $\BT_n,$ we mean the corresponding binary tree of $b_G \in \cB_n(X).$
\end{rem}

\begin{ex}[Example of the bijection between $\BT_n$ and $M_n$]
The left graph in Figure \ref{btqbt} shows the 2v-colored binary tree in bijection to the monomial $\langle [x_2, x_3], x_1 \rangle.$
\begin{figure}
\begin{center}
 
\input{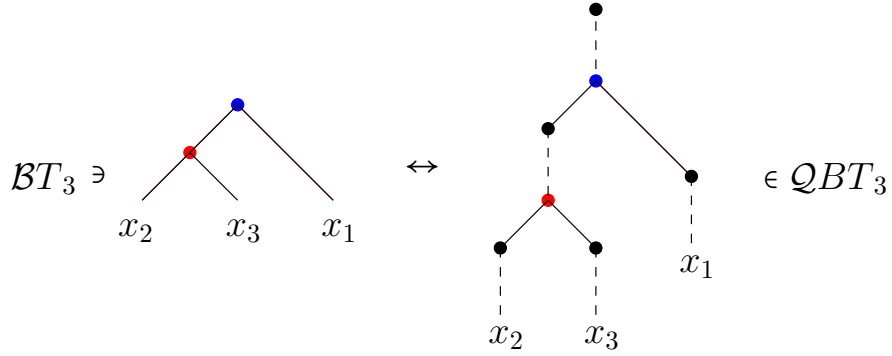}
 
\caption{The 2v-colored binary tree and quasi-binary tree correspond to the monomial $\langle [x_2, x_3], x_1 \rangle.$}
\label{btqbt}
\end{center}
\end{figure}

\end{ex}

The following definition of a pairing between $\BT_n$ and $\OG_n$ is an analogue of \cite{sinha} by Dev Sinha.

\begin{defn}\label{pairing}
Given a $2$v-colored binary tree $T$ in $\BT_n$ and an oriented two-colored graph $G$ in  $\OG_n,$ define 
$$\beta_{G,T}: \{ \mbox{edges of $G$} \} \to \{ \mbox{internal vertices of $T$} \}$$
by sending an edge $e: i \to j$ in $G$ to the nadir of the unique (simple) path $p_T(e)$ from $i$ to $j$ on $T,$ where the {\it nadir} of a path on a rooted tree is defined to be the (internal) vertex on the path that is closest to the root. Let $\tau_{G,T} = (-1)^N,$ where $N$ is the number of edges $e$ in $G$ for which $p_T(e)$ travels counterclockwise at its nadir. We say $\beta_{G,T}$ is {\it color-preserving} if for any edge $e \in G,$ the color of $e$ is the same as the color of $\beta_{G,T}(e).$

Define the {pairing} of $G, T$ as
$$\llangle  G, T \rrangle =
\begin{cases}\tau_{G,T}, & \mbox{if $\beta_{G,T}$ is a bijection and is color-preserving;} \\
0, & \mbox{Otherwise.}
\end{cases}$$
\end{defn}

\begin{defn}
Let $\Theta_n$ be the free $R$-module generated by the $2$v-colored binary trees in $\BT_n$ and $\Gamma_n$ be the free $R$-module generated by the oriented two-colored graphs in $\OG_n.$ Extend the pairing $\llangle \ , \ \rrangle$ of Definition \ref{pairing} to one between $\Theta_n$ and $\Gamma_n$ by linearity.
\end{defn}

$\Theta_n$ is not isomorphic to $\Lie_2(n),$ because we did not define relations between the elements of $\BT_n.$ We now define a submodule of $\Theta_n$ which corresponds to the relations in $\Lie_2(n).$

\begin{defn} \label{bcomb}
For brevity, given a binary tree $T \in \BT_n,$ we call the subtree of $T$ below the left child of the root of $T$ the {\it left subtree} of $T$ and denote it by $ls(T).$ Similarly, we define the {\it right subtree} of $T$ and denote it by $rs(T).$

\begin{alist}
\itm A {\it symmetry combination} in $\Theta_n$ is the sum of two binary trees $T_1, T_2 \in \BT_n$ where there exists a subtree $S_1$ of $T_1$ such that one obtains $T_2$ from $T_1$ by switching the left subtree and right subtree of $S_1.$ We say a symmetry combination is of type (S1) or (S2), 
depending on the color of the root of $S_1.$ 


\itm A {\it Jacobi combination} in $\Theta_n$ is the sum of three binary trees $T_1, T_2, T_3 \in \BT_n$ where there exists a subtree $S_1$ of $T_1$ such that the color of the root of $S_1$ is the same as the color of the right child of the root of $S_1,$ and one can obtain $T_2$ and $T_3$ by cyclic rotation of $ls(S_1),$ $ls(rs(S_1))$ and $rs(rs(S_1)).$ In other words, if we name the subtrees of $T_2$ and $T_3$ corresponding to $S_1$ of $T_1$ to be $S_2$ and $S_3,$ we have $ls(S_1) = ls(rs(S_3)) = rs(rs(S_2)),$ etc. We say a Jacobi combination is of type (J1) or (J2),
depending on the color of the root of $S_1$.


\itm A {\it mixed Jacobi combination} in $\Theta_n$ is corresponding to the mixed Jacobi identity (MJ) in $\Lie_2(n).$ It can be obtained by summing two copies of a Jacobi combination of type (J1) and changing the color of the right child of the root of $S_i$'s in the first copy and the color of the root of $S_i$'s in the second copy from red to blue, where $S_i$'s are the involving subtrees of $T_i$'s in the copy of Jacobi combination we use. We say this combination is of type (MJ).

\end{alist}

Figure \ref{btrel} demonstrates the combinations of types (S1), (J2), and (MJ) in $\Theta_n.$

Let $J_n \subset \Theta_n$ be the submodule generated by symmetry combinations, Jacobi combinations and mixed Jacobi combinations.
\end{defn} 

Note that $J_n$ is in fact same as the relation set $Rel_n$ we have used in last section. Since we use them in different contexts, we give them different names.

\begin{figure}
\begin{center}
 
\input{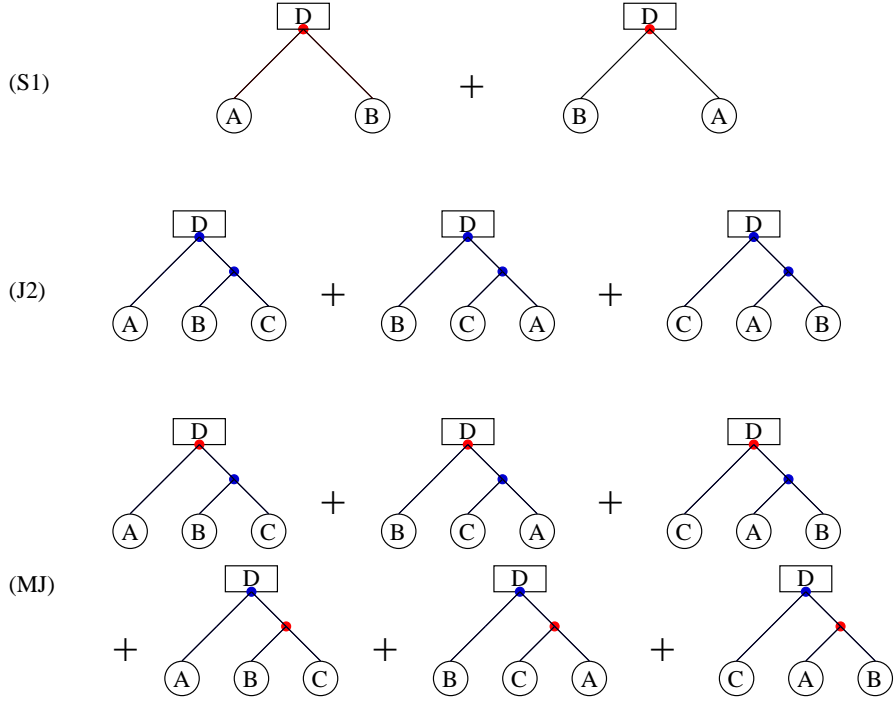}
 
\caption{Examples of elements that generate $J_n$}
\label{btrel}
\end{center}
\end{figure}

Because of the correspondence between the monomials in $\Lie_2(n)$ and the binary trees in $\BT_{n},$ and the correspondence between the relations in $\Lie_2(n)$ and the generators of $J_n,$ the following lemma immediately follows.

\begin{lem}
$$\Lie_2(n) \cong \Theta_n/ J_n.$$
\end{lem}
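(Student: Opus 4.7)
The plan is to construct an $R$-linear surjection $\Phi \colon \Theta_n \to \Lie_2(n)$ out of the canonical bijection $\BT_n \to M_n$, and then to identify $\ker\Phi$ with $J_n$.

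First I would extend the correspondence $T \mapsto m_T$ (as recalled in the remark preceding the lemma: each red internal vertex corresponds to $\brkone$, each blue one to $\brktwo$, with the two arguments read off recursively from the left and right subtrees) $R$-linearly to a homomorphism $\Phi \colon \Theta_n \to \Lie_2(n)$. Surjectivity is immediate, since $M_n$ already spans $\Lie_2(n)$.

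Next I would verify $J_n \subseteq \ker\Phi$ by a case check on the three families of generators. A symmetry combination of type (S1), i.e.\ $T_1 + T_2$ where $T_2$ is obtained from $T_1$ by swapping the two subtrees at a single red internal vertex, is mapped by $\Phi$ to an instance of antisymmetry (S1) for $\brkone$ applied to a sub-bracket of some larger monomial, and hence vanishes by the multilinearity of all outer brackets; type (S2) is identical with $\brktwo$ in place of $\brkone$. A Jacobi combination of type (J1) (respectively (J2)) unpacks, via $\Phi$, into the three cyclic terms of identity (J1) (respectively (J2)) applied at a specific pair of nested same-colored brackets. A mixed Jacobi combination unpacks into the six terms of (MJ) applied at a chosen location. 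In each case the image is zero, so $\Phi$ descends to a surjection $\bar\Phi \colon \Theta_n / J_n \to \Lie_2(n)$.

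For the reverse containment I would appeal to the defining presentation of $\Lie_2(n)$: it is the free $R$-module on $M_n$ modulo the submodule generated by all multilinear substitution instances of (S1), (S2), (J1), (J2), (MJ). The dictionary set up in Definition \ref{bcomb} matches these multilinear instances bijectively with the generators of $J_n$: choosing a multilinear monomial along with the location at which to apply one of the five relations is exactly the same data as choosing a $2$v-colored binary tree together with a distinguished subtree $S_1$ of the prescribed color pattern. Hence the relation submodule on the $\Lie_2(n)$ side is precisely the image of $J_n$, and $\bar\Phi$ is injective.

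The only point that requires any care is the last paragraph, where one must check that every relation between elements of $M_n$ inside $\Lie_2(n)$ is a consequence of multilinear instances of (S1)--(MJ), with no extra relation sneaking in from non-multilinear substitutions that happen to land in the multilinear part. This is fine because the five defining relations are multihomogeneous in $X$, so passing to the multilinear summand commutes with taking the quotient. Beyond this, the proof is routine bookkeeping: the lemma is essentially a reformulation in the language of binary trees of the definition of $\Lie_2(n)$.
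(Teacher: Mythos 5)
Your proof is correct and follows essentially the same route as the paper, which simply observes that the lemma ``immediately follows'' from the bijection between $M_n$ and $\BT_n$ together with the correspondence between the defining relations of $\Lie_2(n)$ and the generators of $J_n$. You have merely spelled out the details (the surjection $\Phi$, the case check on generators, and the multihomogeneity argument for why only multilinear instances of the relations matter) that the paper leaves implicit.
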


\begin{prop}\label{vanish1}
The pairing $\llangle \beta, \alpha \rrangle$ vanishes whenever $\alpha \in J_n.$
\end{prop}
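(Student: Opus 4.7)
The plan is to verify the vanishing of $\llangle G, \alpha\rrangle$ for an arbitrary $G\in\OG_n$ and each of the three types of generators of $J_n$, using the bilinearity of the pairing. In all cases the argument is local: the three types of combinations only affect a single small subtree $S_1\subset T_i$, and edges of $G$ whose path-nadir in $T_i$ lies strictly inside one of the fixed subtrees (or strictly above $S_1$) contribute identically to each tree in the combination, so the analysis reduces to tracking the behavior of the handful of edges whose nadirs sit at the roots of $S_1$ or its right child.

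For a symmetry combination $T_1+T_2$, where $T_2$ is obtained from $T_1$ by swapping the left and right subtrees at a vertex $v$: since this swap changes neither colors of internal vertices nor the least-common-ancestor structure, the set-map $\beta_{G,T_1}$ coincides with $\beta_{G,T_2}$, and in particular both are (or are not) color-preserving bijections simultaneously. In the nontrivial case, since $\beta_{G,T_1}$ is a bijection there is exactly one edge $e^\ast$ of $G$ with nadir $v$; this $e^\ast$ has one endpoint in each subtree of $v$, so swapping the subtrees reverses the left-right direction in which its path crosses $v$, while every other edge keeps its counterclockwise status. Thus $\tau_{G,T_1}=-\tau_{G,T_2}$, and the pair cancels.

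For a Jacobi combination of type (J1) or (J2), let $v_1$ be the root of $S_1$, $v_2$ the root of $rs(S_1)$ (same color as $v_1$), and $A,B,C$ the three subtrees that are cyclically rotated through the three positions $(ls(S),ls(rs(S)),rs(rs(S)))$ in $T_1,T_2,T_3$. The edges of $G$ relevant to $v_1,v_2$ are the \emph{cross edges}: edges with endpoints in two distinct parts among $A,B,C$. A classification by cross-edge type ($AB$, $BC$, $CA$) shows that for any pair of cross edges either all three $\beta_{G,T_i}$ fail to be bijections, or exactly two of them are bijections; in the latter case the color-preservation condition forces both cross edges to carry the common color of $v_1,v_2$. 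Comparing the two nonzero trees, exactly one of the two cross edges keeps the same left/right traversal direction at its nadir while the other reverses; hence $\tau_{G,T_i}+\tau_{G,T_j}=0$, as required.

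For a mixed Jacobi combination (type (MJ)), which by construction is the sum of two suitably recolored copies of a (J1) combination, the same framework applies but the colors of $v_1$ and/or $v_2$ differ across the six summands. The analysis is parallel to the (J1) case: one again classifies the cross edges, now tracks which of the six recoloring/rotation combinations render $\beta_{G,T}$ a color-preserving bijection, and verifies that the surviving terms pair up with opposite signs. The main obstacle throughout is the sign bookkeeping in the Jacobi and mixed-Jacobi cases: one has to pin down carefully how the ``counterclockwise at nadir'' status of each cross edge transforms under a cyclic rotation of $A,B,C$ at $v_1,v_2$, so that the parity of reversals in each surviving case is exactly odd. Once this bookkeeping is set up, the cancellation is mechanical.
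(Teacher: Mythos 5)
Your proof is correct and follows essentially the same strategy as the paper: reduce to the generators of $J_n$, observe that only edges whose nadirs land at the one or two affected internal vertices matter, and show that either all terms vanish or exactly two survive with opposite signs. The only cosmetic difference is that you classify the cross-edge types ($AB$, $BC$, $CA$) exhaustively, whereas the paper deduces from connectivity of $G$ that the two cross edges share a common part and then checks the resulting configuration directly.
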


\begin{proof}
It is sufficient to check the case that $\beta = G$ is a graph in $\OG_n$ and  $\alpha$ is one of the combinations defined in Definition \ref{bcomb}. When $\alpha = T_1 + T_2$ is a symmetry combination, it is clear that $\llangle G, T_1 \rrangle$ and $\llangle G, T_1 \rrangle$ either both are equal to zero or both are nonzero and only differ by a sign. Suppose $\alpha = T_1 + T_2 + T_3$ is a Jacobi combination. Without loss of generality, we assume $\alpha$ is of type (J2). Hence, we can consider $T_1, T_2, T_3$ to be the three trees in the sum labeled (J2) in Figure \ref{btrel}, and in the same order. If the pairing between $G$ and each of the three trees is zero, then there is nothing to check. Suppose at least one of them is nonzero. Let $X_A, X_B$ and $X_C$ be the labels of the subtrees $A, B$ and $C,$ respectively, and $X_D = X \setminus (X_A \cup X_B \cup X_C)$. If $X_D$ is empty, we let $G' = G;$ otherwise, there exists an edge $e$ in $G$ such that after removing $e,$ graph $G$ breaks into two graphs $G_D$ and $G'$ on vertices $X_D$ and $X_A \cup X_B \cup X_C.$ Now we must be able to find two edges $e_1$ and $e_2$ in $G',$ such that after removing these two edges, $G'$ breaks into three graphs $G_A, G_B, G_C$ on vertices $X_A, X_B, X_C,$ respectively. Two edges connecting three graphs implies that one of the graphs is connected to both edges. Without loss of generality, we assume $G_A$ is connected to both edges. Therefore, we can assume that $e_1$ connects $G_A$ and $G_B$ and $e_2$ connects $G_A$ and $G_C.$ One checks that the $\llangle G, T_1 \rrangle = 0$ since  $p_{T_1}(e_1)$ and $p_{T_1}(e_2)$ have the same nadir, and $\llangle G, T_2 \rrangle = - \llangle G, T_3 \rrangle.$ If $\alpha$ is a mixed Jacobi combination, we can similarly show that $\llangle G, \alpha \rrangle$ vanishes.
\end{proof}

We now define relations on oriented graphs.
\begin{defn}\label{ocomb}
\begin{alist}
\itm A {\it symmetry combination} in $\Gamma_n$ is the sum of two graphs $G_1, G_2 \in \OG_n$ such that one obtains $G_2$ by switching the orientation of one edge $e$ in $G_1.$ To be consistent, we say a symmetry combination is of type (S1) or (S2) depending on the color of $e.$
\itm A {\it Jacobi combination} in $\Gamma_n$ is the sum of three graphs $G_1, G_2, G_3 \in \OG_n,$
where $G_i$ has subgraph $S_i$ for each $i,$ such that  $G_1\setminus S_1 = G_2 \setminus S_2 = G_3 \setminus S_3,$  $S_1$ is a graph with two same-colored edges $i \to j$ and $j \to k,$ for some $i, j, k \in X,$ and one can obtain $S_2,$ $S_3$ by cyclicly rotating $i, j, k.$ Again, we say a Jacobi combination is of type (J1) or (J2) depending on the color of the edges in $S_i.$
\itm A {\it mixed Jacobi combination} in $\Gamma_n$  is obtained by summing two copies of a Jacobi combination in $\Gamma_n$ of type (J1) and changing the color of one edge of $S_i$ in the first copy and the color of the other edge of $S_i$ in the second copy from red to blue. We say this combination is of type (MJ).
\end{alist}
Figure \ref{ogrel} demonstrates the subgraphs $S_i$'s in the combinations of types (S1), (J2), and (MJ) in $\Gamma_n$. 

Let $I_n \subset \Gamma_n$ be the submodule generated by symmetry combinations, Jacobi combinations and mixed Jacobi combinations, as well as the graphs with more than one edge between two vertices and disconnected graphs.
\end{defn}

\begin{figure}
\begin{center}
\input{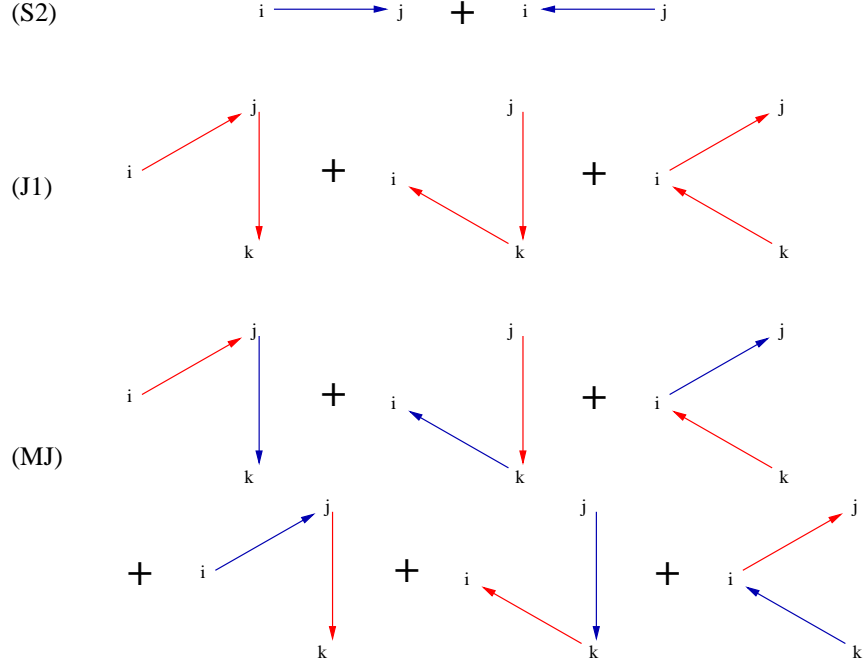}
 
\caption{Examples of (subgraphs of) elements that generate $I_n$}
\label{ogrel}
\end{center}
\end{figure}

\begin{prop}\label{vanish2}
The pairing $\llangle \beta, \alpha \rrangle$ vanishes whenever $\beta \in I_n.$
\end{prop}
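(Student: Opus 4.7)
The plan is to check vanishing on each type of generator of $I_n$ separately and, by linearity, to fix an arbitrary $T \in \BT_n$ and show $\llangle G^{(1)} + \cdots + G^{(k)}, T \rrangle = 0$ for each such generator. The degenerate generators are handled first: for $\beta_{G,T}$ to be a bijection, $G$ must have exactly $n-1$ edges (the number of internal vertices of $T$); a graph on $n$ vertices with parallel edges between two vertices $a,b$ produces two paths in $T$ with identical nadir ($= \text{LCA}_T(a,b)$), breaking injectivity, and a disconnected graph with $n-1$ edges must contain a cycle, and any cycle $v_1,\dots,v_m,v_1$ in $G$ forces at least two of the paths $p_T(e_i)$ in $T$ to share the LCA of $\{v_1,\dots,v_m\}$ as a nadir. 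In each case $\llangle G, T\rrangle = 0$.

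For a symmetry combination $G_1 + G_2$ that differs by reversing the orientation of a single edge $e$, the underlying unoriented colored graph and hence $\beta_{G_i,T}$, its bijectivity, and the color-preserving property are all unchanged. Only the direction of travel along $p_T(e)$ is reversed; this flips whether the path moves counterclockwise at its nadir, so $\tau_{G_1,T} = -\tau_{G_2,T}$ and the two terms cancel.

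For a Jacobi combination $G_1+G_2+G_3$ (of type (J1) or (J2)) with subgraphs $S_\ell$ carrying the cyclically rotated two-edge paths on $\{i,j,k\}$, I would fix $T$ and let $v = \text{LCA}_T(\{i,j,k\})$. One of $i,j,k$ (say $i$) is separated at $v$ from the other two, and $v' = \text{LCA}_T(\{j,k\})$ lies strictly below $v$. A direct case analysis on which of $i,j,k$ plays this distinguished role shows that exactly one of the three graphs has both $S_\ell$-paths landing at the same nadir (hence $\llangle G_\ell, T\rrangle = 0$), while the other two graphs both send $S_\ell$-edges bijectively onto $\{v,v'\}$, with \emph{identical} color-preserving conditions (since the two edges of every $S_\ell$ share one color, the condition becomes: the colors of $v$ and $v'$ both equal that common color). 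It then remains to compare $\tau$ for the two nonzero terms: contributions from edges outside the $S_\ell$'s agree; one $S_\ell$-edge with nadir $v'$ is the same edge in both graphs; and the two $S_\ell$-edges meeting the nadir $v$ traverse opposite directions at $v$ (one goes from the $\{i\}$-side into the $\{j,k\}$-subtree, the other goes out of it), producing opposite $\tau$-contributions. This gives the cancellation.

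For a mixed Jacobi combination the same LCA case analysis applies to the six terms: the pair of (J1)-copies is rigged so that the color constraints at $v$ and $v'$ split as ``red/blue'' versus ``blue/red'', and within each of the three LCA-cases the surviving nonzero terms pair up with opposite $\tau$-signs by exactly the same traversal argument used for (J1)/(J2). The main obstacle throughout is the bookkeeping: one must carefully track, for each LCA-case and each of the (at most six) terms, both the bijectivity/color-preservation of $\beta_{G,T}$ and the local orientation at each nadir contributing to $\tau$. This is conceptually the ``dual'' of the argument in Proposition \ref{vanish1}, so I would organize the proof in parallel to that one to minimize notation.
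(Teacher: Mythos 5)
Your proposal is correct and follows essentially the same route as the paper's proof: check each generator of $I_n$ against a fixed $T\in\BT_n$, kill the degenerate graphs via failure of bijectivity of $\beta_{G,T}$, and for the (mixed) Jacobi combinations observe that two of the three nadirs on $\{i,j,k\}$ coincide, forcing one term to vanish and the remaining terms to cancel in pairs via opposite traversal directions at the shared nadir. The only difference is that you spell out why a disconnected graph cannot pair nontrivially (edge count plus the cycle/nadir argument), a detail the paper leaves implicit.
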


\begin{proof} 
It is sufficient to check when $\alpha = T$ is a binary tree in $\BT_n$ and $\beta$ is an oriented graph $G$ with multiple edges between two vertices, a disconnected graph, or one of the combinations defined in Definition \ref{ocomb}. If $\beta = G$ with multiple edges between two vertices, or $G$ is disconnected, then $\beta_{G,T}$ cannot be a bijection. Thus, $\llangle G, T \rrangle = 0.$ When $\beta = G_1 + G_2$ is a symmetric combination, one checks that $\llangle G_1, T \rrangle$ and $\llangle G_2, T \rrangle$ either are both zero or only differ by a sign. If $\beta$ is a Jacobi combination, without loss of generality, we assume $\beta$ is of type (J1) in $\Gamma_n.$ Hence, we assume $\beta$ is the sum of three oriented two-colored graphs which  differ only on the subgraphs shown in (J1) in Figure \ref{ogrel}, and call them $G_1, G_2,$ and $G_3$ by order.  Let $v_{i,j}, v_{j,k}, v_{k,i}$ be the nadirs of the paths $p_T(i \to j), p_T(j \to k), p_T(k \to i),$ respectively. It is easy to see two of $v_{i,j}, v_{j,k}, v_{k,i}$ must agree. Without loss of generality, we assume $v_{i,j} = v_{j,k}$ Then $\llangle G_1, T \rrangle = 0$ and  $\llangle G_2, T \rrangle = -\llangle G_3, T \rrangle.$ If $\beta$ is a mixed Jacobi combination, we can similarly show that $\llangle \beta, T \rrangle$ vanishes.
\end{proof}

\begin{defn}
Let $$\Eil_2(n) = \Gamma_n/I_n.$$
\end{defn}

By Propositions \ref{vanish1} and \ref{vanish2},  the pairing  $\llangle , \rrangle$ between $\Theta_n$ and $\Gamma_n$ induces a pairing between $\Lie_2(n)$ and $\Eil_2(n).$ We still use the same notation  $\llangle , \rrangle$ to denote the pairing. We will show in one of the main results in Section \ref{perfpair} that the pairing $\llangle , \rrangle$ between $\Lie_2(n)$ and $\Eil_2(n)$ is perfect.

We have already shown that $\cB_n(X) = \{ b_G \ | \ G \in \overline{\cG}_n \}$ spans $\Lie_2(n).$ In the next section, we will define a set $\cO_n(X)$ and show it spans $\Eil_2(n).$ Then, in Section \ref{perfpair}, we show the matrix associated to the pairing between $\cB_n(X)$ and $\cO_n(X)$ is upper triangular to conclude our theorems.

\section{A basis candidate for $\Eil_2(n).$}

The elements of $\cB_n(X)$ are obtained from elements in $\overline{\cG}_n.$ It is natural to define a set in $\Gamma_n$ from $\overline{\cG}_n$ as well. Because of the relation between two-colored graphs and oriented two-colored graphs, we give the following definition.

\begin{defn} 
\begin{alist}
\itm For any oriented two-colored graph $G \in \OG_n,$ we define the {\it unoriented copy} of $G,$ denoted by $n_G,$ to be the two-colored graph obtained from $G$ by removing the orientations on the edges of $G.$

We say $G$ is a {\it tree} on $X,$ if $n_G$ is a tree on $X,$ i.e., $n_G$ is connected and acyclic. (Note $n_G$ is connected if and only if $G$ is connected. Therefore, the condition can be replaced by ``$G$ is connected and $n_G$ is acyclic''.)

\itm For any two-colored graph $G \in {\cG}_n,$ we define the {\it oriented copy} of $G,$ denoted by $o_G,$ to be the unique consistent oriented two-colored graph obtained from $G.$ In other words, for any edge $e = \{i,  j\}$ with $i < j$ in $G,$ we orient it as $i \to j$ if it is red,  and orient it as $j \to i$ if it is blue, and call the resulting oriented graph $o_G.$

In particular, we define $\cO_n(X)$ to be the set of all oriented copies of graphs in $\overline{\cG}_n = \cR_n:$ $$\cO_n(X) = \{ o_G \ | \ G \in \overline{\cG}_n \}.$$
\end{alist}
\end{defn}

We state without proof in the following lemma some easy results on the objects we just defined.
\begin{lem}\label{propogra}
\begin{ilist}
\itm For any $G_1, G_2 \in \OG_n,$ if the unoriented copy of $G_1$ is the same as the unoriented copy of $G_2,$ then $G_1$ is equal to $G_2$ or only differs by a sign in $\Eil_2(n).$
\itm The map $G \mapsto o_G$ gives a bijection between $\cG_n$ and the set of all consistent graphs in $\OG_n.$
\itm For any oriented two-colored graph $G \in \OG_n,$ $G$ is in $\cO_n(X)$ if and only if $G$ is a consistent tree on $X$, and there is a unique {\bf source} $r$ in $G,$ i.e., $r$ is the unique vertex in $G$ without incoming edges. (One can check that $r$ is in fact the root of $n_G.$)
\end{ilist}
\end{lem}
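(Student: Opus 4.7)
The plan is to dispatch each part by unpacking definitions and invoking the structural characterization of $\overline{\cG}_n$ from the earlier bijection lemma.

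For (i), I would observe that if $n_{G_1}=n_{G_2}$, then $G_2$ differs from $G_1$ only by reversing orientations on some subset $E$ of edges. Reversing a single edge $e$ produces a symmetry combination (of type (S1) or (S2) according to the color of $e$), which lies in $I_n$, so the two oriented graphs are negatives of each other in $\Eil_2(n)$. Iterating over the edges of $E$ gives $G_2=(-1)^{|E|}G_1$ in $\Eil_2(n)$, which is exactly $G_1=\pm G_2$.

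For (ii), the inverse of the map $G\mapsto o_G$ is the forgetful map $H\mapsto n_H$ restricted to the set of consistent oriented two-colored graphs: the consistent orientation is uniquely determined by the colors together with the natural order on $X$, so $o_{n_H}=H$ whenever $H$ is consistent, and $n_{o_G}=G$ for every $G\in\cG_n$ by construction. This verification is essentially tautological.

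The forward implication of (iii) is immediate from the definition $\cO_n(X)=\{o_G:G\in\overline{\cG}_n\}$ together with the earlier bijection lemma, which identifies the root of $G\in\overline{\cG}_n=\cR_n$ with the unique vertex of $o_G$ having no incoming edges. The reverse implication carries the real content: given a consistent oriented tree $G$ with unique source $r$, I would check that its unoriented copy $n_G$ avoids the three patterns of Definition \ref{2color}. A three-case inspection suffices — each forbidden pattern would place two incoming edges at some vertex of $G$ under the consistent orientation ($k$ for $1r3r2$, $i$ for $2b1b3$, $j$ for $1r2b3$). Since a tree on $n$ vertices has exactly $n-1$ edges whose contributions to incoming degree sum to $n-1$, any vertex with at least two incoming edges forces another vertex besides $r$ to have no incoming edge, contradicting uniqueness of the source. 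I expect no real obstacle here beyond the careful bookkeeping of the three patterns; this step is the same argument that proves the earlier bijection, only read in the opposite direction.
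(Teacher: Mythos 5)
Your proof is correct and complete; the paper itself states Lemma \ref{propogra} without proof (``We state without proof \dots some easy results''), and your argument is exactly the intended one. In particular, your treatment of part (iii) --- translating each forbidden pattern of Definition \ref{2color} into a vertex of in-degree at least two under the consistent orientation, and then using the in-degree count $\sum \deg^{-}(v)=n-1$ to contradict uniqueness of the source --- is the same reasoning already used in the proof of the color-map bijection between $\cR_n$ and $\overline{\cG}_n$, read in the opposite direction, as you note.
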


\begin{prop}\label{eilspan} $\cO_n(X)$ spans $\Eil_2(n).$
\end{prop}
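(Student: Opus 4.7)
The plan is to carry out a sequence of reductions modulo $I_n$, showing that every oriented two-colored graph $G \in \OG_n$ is congruent to a linear combination of elements of $\cO_n(X)$. First I would dispose of the trivial cases: disconnected graphs and graphs with multiple edges between the same vertex pair already lie in $I_n$. So we may assume $G$ is connected and simple. Using symmetry combinations (S1) and (S2), any single edge can be reversed at the cost of a sign, so by Lemma \ref{propogra}(ii) we may replace $G$ by $\pm o_H$ for some connected simple two-colored graph $H \in \cG_n$.

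Next I would reduce to the case where $H$ is a tree. If $H$ contains a cycle, choose a vertex $v$ on a shortest cycle with the two cycle-edges $e_1 = \{u,v\}, e_2 = \{v,w\}$ at $v$. Using symmetry to flip at most one of $e_1, e_2$ so that they form a directed path $u \to v \to w$, I would apply a Jacobi combination (when $e_1, e_2$ share a color) or a mixed Jacobi combination (otherwise, pairing two copies to cancel the color change). Writing $G$ as minus the other two terms, these terms route the cycle through the chord $\{u, w\}$: if $\{u, w\}$ was already an edge of $H$ (in particular when the cycle has length three), the chord creates a multi-edge so the term vanishes in $I_n$, while otherwise the new graphs have strictly shorter shortest cycle. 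Inducting on the length of the shortest cycle (triangles as the base case, where both new terms vanish), we write $G$ as a linear combination of consistent trees $o_T$ with $T \in \cG_n$.

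Finally, I would reduce consistent trees to $\cO_n(X)$. If $T \notin \overline{\cG}_n$, then $T$ contains one of the forbidden patterns $1r3r2$, $2b1b3$, $1r2b3$ on three vertices $i < j < k$. In each case the two bad edges point into a common vertex in the consistent orientation, so they fail to form a directed path. Reversing one of them by symmetry yields a path, and the appropriate (mixed) Jacobi relation replaces this pair by pairs involving whichever edge of $\{i,j\}, \{j,k\}, \{i,k\}$ is not present in $T$. After normalizing with further symmetry flips, we obtain $o_T$ as a signed sum of consistent trees $o_{T'}$ where $T'$ has the bad pair exchanged for a ``better'' pair of edges among $\{i,j,k\}$. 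An induction on a suitable measure of badness then terminates at a linear combination of elements of $\cO_n(X)$.

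The hard part is this third step: one must verify that the (mixed) Jacobi substitution produces consistent trees and that the complexity measure strictly decreases across all three patterns. Naive measures like $\sum_{e} \max(e)$ or the number of vertices with more than one incoming edge do not decrease uniformly, since patterns $1r3r2$ and $2b1b3$ push edges in opposite directions along the vertex order; one needs a more refined invariant, perhaps lexicographic in the rooted structure. The mixed-color pattern $1r2b3$ is particularly delicate because its reduction goes through the six-term mixed Jacobi identity, requiring some bookkeeping to consolidate all intermediate graphs back into consistent form.
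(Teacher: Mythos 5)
Your first two reductions (killing disconnected and multi-edge graphs, normalizing orientations by symmetry, and showing that any graph whose underlying graph has a cycle vanishes by induction on the shortest cycle length, via Jacobi for same-colored consecutive edges and mixed Jacobi otherwise) reproduce the paper's Lemma \ref{cyclic} in substance, and that part is fine, modulo the small point that what the cycle induction actually proves is that every non-tree is $0$ in $\Eil_2(n)$, and that in the mixed-color case some of the five auxiliary graphs retain a cycle of the \emph{same} length but acquire two consecutive same-colored edges, so you must fall back on the first case rather than on shorter cycles alone.

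The genuine gap is your third step, and you have in effect conceded it yourself. Reducing a consistent tree $o_T$ with a forbidden pattern ($1r3r2$, $2b1b3$, or $1r2b3$) by a (mixed) Jacobi move on the offending pair of edges does produce trees again, but you have no termination argument: as you observe, $\sum_e \max(e)$ fails for $2b1b3$ (replacing $\{i,j\},\{i,k\}$ by $\{j,k\}$ plus one survivor can increase it), $\sum_e \min(e)$ fails for $1r3r2$, and counting vertices with two incoming edges is not monotone either, since fixing one pattern can create another elsewhere. Without a concrete well-founded invariant the rewriting procedure is not a proof. The paper avoids this difficulty entirely in Lemma \ref{lcgraph} by inducting on $n$ rather than rewriting on a fixed vertex set: it deletes a single carefully chosen edge $e$ (one of three cases, keyed to leaves and to the largest letter $x_n$), applies the inductive hypothesis to the two resulting subtrees to put them into $\cO(X_1)$ and $\cO(X_2)$ form, and then reattaches; the case analysis is arranged so that each reattached graph either already lies in $\cO_n(X)$ or needs exactly one application of (J1), (J2), or (MJ) to land in an earlier case. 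If you want to salvage your rewriting approach you would need to exhibit an explicit decreasing invariant (something in the spirit of the partial orders $\le_{\ind}$ or $\le_{\op}$ the paper introduces later for the pairing argument); as written, the proposal does not establish the span.
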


We break the proof of this proposition into the following two lemmas.
\begin{lem}\label{cyclic}
For any oriented two-colored graph $G$, we have $G = 0$ in $\Eil_2(n)$ unless $G$ is a tree on $X,$ i.e., $G$ is connected and $n_G$ is acyclic. 
\end{lem}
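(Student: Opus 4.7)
The plan is to dispose of the two easy situations first and then induct on the length of a shortest cycle. If $G$ is disconnected, or if $G$ has two or more edges between the same pair of vertices, then $G\in I_n$ directly by the definition of $I_n$, and hence $G=0$ in $\Eil_2(n)$. I may therefore assume that $G$ is connected and has no multi-edges yet contains a cycle, and the goal is to reduce this case to the situations already handled.

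Let $k$ be the length of a shortest cycle $C$ in $G$; I induct on $k$. For the base case $k=3$, pigeonhole forces two of the three triangle edges to share a color, and these two edges necessarily share a vertex, so they form a V-subgraph to which (J1) or (J2) applies (after first bringing its orientation to the canonical $\{i\to j,\,j\to k\}$ form via (S1)/(S2), at the cost of a sign). The two companion terms of the Jacobi combination each introduce a new edge between the remaining two vertices of the triangle, producing a multi-edge with the third triangle edge and hence vanishing in $\Eil_2(n)$; so $G=0$.

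For the inductive step $k\ge 4$, I distinguish two subcases along $C$. In Subcase~(A), some V-subgraph on $C$ has both its edges of the same color: applying (J1) or (J2) produces two companion terms, each containing a cycle of length $k-1$ (the discarded arc of $C$ closed up by the newly introduced chord), which vanish by the induction hypothesis, so $G=0$. In Subcase~(B), every V along $C$ is two-colored, which forces the colors to strictly alternate around $C$ and in particular $k$ to be even; I apply (MJ) at any chosen V. Four of the six resulting terms introduce a new shortening edge and vanish by induction, while the remaining two are $G$ itself and a graph $G'$ differing from $G$ only by swapping the two colors on the chosen V. The (MJ) relation then reads $G+G'=0$, so it suffices to show $G'=0$. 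But the cycle in $G'$ is no longer color-alternating at the vertex adjacent to the swapped V---a monochromatic V-pair appears there---so Subcase~(A) applies to $G'$, yielding $G'=0$ and hence $G=0$.

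The main obstacle is Subcase~(B): a naive application of (MJ) does not by itself reduce the shortest cycle length, since two of its six terms still carry cycles of the original length $k$. The key observation that unblocks the induction is that recoloring a single bicolored V in a color-alternating cycle necessarily creates a monochromatic V at an adjacent vertex, after which Subcase~(A) can finish the job. The remaining bookkeeping---aligning orientations via (S1)/(S2) to match the canonical form of the Jacobi/MJ combinations, and checking that the assumption $k\ge 4$ prevents the newly added chord from coinciding with an existing edge (which would in any case merely give an even simpler multi-edge reduction)---is routine.
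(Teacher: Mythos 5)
Your proof is correct and follows essentially the same route as the paper's: induction on cycle length, using a Jacobi combination to trade a monochromatic consecutive pair for two graphs with strictly shorter cycles, and using (MJ) on an alternating cycle to reduce to the monochromatic case via the sign relation $G+G'=0$. The only differences are cosmetic --- you induct on the shortest cycle starting at $k=3$ and treat multi-edges as a separate preliminary case, whereas the paper starts its induction at $k=2$ --- and your explicit observation that recoloring one bicolored V creates a monochromatic V at a neighboring vertex is exactly the step the paper uses implicitly.
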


\begin{lem}\label{lcgraph}
For any oriented two-colored graph $G$, if $G$ is a tree on $X,$ then $G,$ as an element in $\Eil_2(n),$ can be written as a linear combination of elements in $\cO_n(X).$
\end{lem}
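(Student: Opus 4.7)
The plan is a two-stage reduction. First, using symmetry combinations, I would reduce any oriented tree $G \in \OG_n$ to its consistent oriented copy $o_{n_G}$: each inconsistent edge of $G$ can be flipped into consistency by one symmetry combination at the cost of a sign, so $G = (-1)^m o_{n_G}$ in $\Eil_2(n)$, where $m$ is the number of inconsistent edges of $G$. It therefore suffices to express every consistent oriented tree $H$ as an $R$-linear combination of elements of $\cO_n(X)$ modulo $I_n$.

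Second, given a consistent tree $H$, by Lemma \ref{propogra}(iii) together with Definition \ref{2color}, $H$ lies in $\cO_n(X)$ iff $n_H$ avoids the three forbidden patterns, equivalently iff every vertex of $H$ has in-degree at most one. If some vertex $v$ has two incoming edges, I would apply one of three specific rewriting moves, depending on the colors of a chosen pair of incoming edges at $v$. For pattern $1r3r2$ (two red incomings from $a < b < v$), flip $b \to v$ to $v \to b$ via symmetry, apply the Jacobi relation of type (J1) to the red path $a \to v \to b$, and restore consistency on the two resulting summands by further symmetry flips; the outcome is a signed sum of two consistent trees in each of which $v$'s in-degree has dropped by one (and $b$'s in-degree has risen by one). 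The $2b1b3$ pattern is handled dually with (J2), displacing an incoming edge from $v$ to a strictly larger vertex. The $1r2b3$ pattern is handled with the mixed Jacobi relation (MJ), producing a linear combination of five consistent trees in each of which the in-degree at $v$ strictly drops.

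The central technical obstacle is termination. For the $1r3r2$ rewriting, the lexicographic order on the pair $(\phi_{\max}(H), \mathrm{in}(\phi_{\max}(H)))$ --- where $\phi_{\max}(H)$ is the largest vertex of $H$ having in-degree at least two --- strictly decreases, because the displaced incoming edge migrates to a strictly smaller vertex; the $2b1b3$ case is dual. The subtle case is $1r2b3$, because the five mixed-Jacobi summands can simultaneously create fresh incoming edges at both a smaller vertex and a larger vertex of the offending triple, which may in turn generate new $1r3r2$, $2b1b3$, or $1r2b3$ defects elsewhere in the tree. I would address this by designing a compound well-founded measure that tracks all three types of defect in a carefully chosen priority order, and by verifying that each rewriting step strictly decreases this compound measure in every summand of its output. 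Establishing this measure, and checking it for each of the five summands produced by a single mixed-Jacobi rewrite, is where I would expect the main technical work.
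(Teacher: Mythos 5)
Your proposal correctly reduces to consistent trees via symmetry combinations (stage one is fine: flipping the $m$ inconsistent edges gives $G=(-1)^m o_{n_G}$ in $\Eil_2(n)$), and it correctly identifies the local rewriting moves: at a vertex of in-degree at least two, one application of (J1), (J2), or (MJ) (conjugated by symmetry flips) replaces the tree by a signed sum of consistent trees in which that vertex's in-degree has dropped. The moves also do preserve tree-ness, since each relation exchanges two edges of the triangle on $\{i,j,k\}$ for another two. But the proof is incomplete at exactly the point you flag yourself: termination is asserted to follow from ``a compound well-founded measure'' that is never constructed, and this is not a routine omission. The (J1) move displaces the offending in-edge to a strictly \emph{smaller} vertex, the (J2) move to a strictly \emph{larger} one, and the five summands of an (MJ) move create new in-edges at both the smaller and the larger vertex of the offending triple simultaneously; so neither ``largest defective vertex'' nor ``smallest defective vertex'' is monotone, and the obvious global quantity $\sum_v \max(0,\mathrm{indeg}(v)-1)$ is not strictly decreasing either (it is conserved by the red move when the receiving vertex becomes newly defective). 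Without an explicit measure verified on all five (MJ) summands, the rewriting process has not been shown to terminate, and the lemma is not proved.

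For comparison, the paper sidesteps the global termination problem entirely by inducting on $n$ rather than rewriting the whole tree: it deletes one carefully chosen edge $e$, applies the inductive hypothesis to the two components, and reassembles; the reassembled graphs $(G_1,e,G_2)$ then require at most a single application of (J1), (J2), or (MJ) at the junction, and the three cases are ordered so that the output of each falls into an earlier case. If you want to salvage your direct-rewriting approach, you would either need to exhibit and verify a genuine well-founded measure (plausibly something in the spirit of the index $\iota$ and the order $\le_{\op}$ used later in the paper), or restructure the argument as an induction on $n$ as the paper does.
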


It is clear that Proposition \ref{eilspan} follows from Lemma \ref{cyclic} and Lemma \ref{lcgraph}.

\begin{proof}[Proof of Lemma \ref{cyclic}]  
If $G$ is disconnected, then $G \in I_n,$ thus is $0$ in $\Eil_2(n).$ Hence, it is left to show that  if $n_G$ has a cycle $(i_1, i_2, \dots, i_k),$ then $G =0$ in $\Eil_2(n).$ 
We prove this by induction on the size $k$ of the cycle. If $k = 2,$ then there are at least two edges connecting some vertices $i_1$ and $i_2$ in $G.$ Thus, $G \in I_n$ and $G = 0$ in $\Eil_2(n).$ Suppose the lemma holds when $k < k_0$ (where $k_0 \ge 3$); we consider $k = k_0.$ Because of the symmetric combination,  we can assume the directions of the edges in the cycle in $G$ are $i_1 \to i_2 \to \cdots \to i_k \to i_1.$ If there are two edges consecutive in the cycle which have the same color, then without loss of generality we assume the edges $(i_1 \to i_2)$ and $(i_2 \to i_3)$ have the same color. Let $G'$ be the graph obtained from $G$ by replacing edge $(i_1  \to i_2)$ with $(i_3 \to i_1)$ and $G''$ be the graph obtained from $G$ by replacing edge $(i_2  \to i_3)$ with $(i_3 \to i_1),$ where for both cases we keep the color of the edges. Then $G + G' + G''$ is a Jacobi combination in $\Gamma_n.$ Thus, $G + G' + G'' =0$ in $\Eil_2(n).$ However, both of $n_{G'}$ and $n_{G''}$ have cycles of size $< k_0.$ Hence, $G = 0$ in $\Eil_2(n).$ If any two consecutive edges in the cycle have different colors, then by using the mixed Jacobi combination on the vertices $i_1, i_2$ and $i_3,$ we can show $G$ plus five graphs is $0$ in $\Eil_2(n),$ where the unoriented copy of each of these five graphs either has a cycle of size smaller than $k_0$, or has a cycle of size $k_0$ with consecutive same-colored edges. Therefore, by using the induction hypothesis together with the first case we proved, $G = 0$ in $\Eil_2(n).$
\end{proof}

\begin{proof}[Proof of Lemma \ref{lcgraph}] 
We prove the lemma by induction on $n.$ When $n=1,$ it is trivial. Now assuming the lemma is true when the size of the alphabet is smaller than $n,$ we will prove the lemma is true when $|X|=n$ in three cases. Recall a {\it leaf} of a tree is a vertex connected to only one edge.  
The cases are the following.
\begin{ilist}
\itm There exists an edge $e$ with two ends $x$ and $y$ in $G$ such that $x$ is a leaf and the color of $e$ is red if $y < x$ and is blue if $y > x.$
\itm There does not exist an edge $e$ satisfying the conditions in (i). There exists an red edge $e$ with two ends $x$ and $y$ in $G$, such that $x = x_n.$
\itm There does not exist an edge $e$ satisfying the conditions in (i). All the edges in $G$ that adjacent to $x_n$ are blue. Let $e$ be one of them with two ends $x = x_n$ and $y.$
\end{ilist}

In all cases, we are going to use the following idea and notation: suppose $e$ is an edge in $G$ with two ends $x$ and $y.$ By removing $e,$ we divide $G$ into two trees $G_x$ and $G_y$ on alphabets $X_1$ and $X_2$ respectively, where $x \in X_1$ and $y \in X_2.$ Let $n_i$ be the size of $X_i,$ for $i = 1,2.$
Since $n_1 + n_2 = |X| = n$ and both of $X_1$ and $X_2$ are nonempty, we have $n_i< n$, for $i = 1,2.$ Thus, by the induction hypothesis, we can write $G_x$ and $G_y$ as linear combinations of elements in $\cO_{n_1}(X_1)$ and $\cO_{n_2}(X_2),$ respectively:
$$G_x = \sum_{G_1 \in \cO_{n_1}(X_1)} \alpha_{G_1} G_1, \ \ \ G_y = \sum_{G_2 \in \cO_{n_2}(X_2)} \beta_{G_2} G_2.$$
For convenience, given two disjoint graphs $G'$ and $G''$ and an oriented two-colored edge $e',$ where one end of $e'$ is in $G'$ and the other end of $e'$ is in $G'',$ we denote by $(G', e', G'')$ the graph obtained by adding $e'$ to connect $G'$ and $G''.$ With this notation, we have $G = (G_x, e, G_y).$ Moreover, 
\begin{equation}\label{recgraph}
G =  \sum_{G_1 \in \cO_{n_1}(X_1), G_2 \in \cO_{n_2}(X_2)}  \alpha_{G_1} \beta_{G_2} \ (G_1, e, G_2).
\end{equation}

We will apply this formula to each of the three cases with respect to the $e$ given in each case. Since changing the orientation of an edge only changes the sign of the involved formula, without loss of generality, we assume $e$ is a consistent edge. 

For case (i), because $G_x = x,$ equation \eqref{recgraph} becomes
$$G =  \sum_{G_2 \in \overline{\cG}_{n_2}} \beta_{G_2} \ (x,  e, G_2).$$
It is sufficient to check that each $(x,  e, G_2)$ is in $\cO_n(X).$ However, $G_2$ is in $\cO_{n-1}(X_2),$ so according to Lemma \ref{propogra}/(iii), $G_2$ is a consistent tree on $X_2$, and there is a unique source $r.$ Because $e$ is consistent, it is clear $(x,  e, G_2)$ is a consistent tree on $X$. By  Lemma \ref{propogra}/(iii), it is left to check that $(x,  e, G_2)$ has a unique source. But the only new vertex $(x,  e, G_2)$ has is $x,$ which is connected to $y$ by the edge $e.$ Since $e$ is consistent and $e$ is red if $y < x$ and is blue if $y > x,$ we can determine the orientation of $e$ is $y \to x.$ Hence, $x$ cannot be a new source. Therefore, $r$ is the unique source in $(x,  e, G_2)$.

For cases (ii) and (iii), because we have already proved case (i), if each $(G_1, e, G_2)$ appearing in formula \eqref{recgraph} falls into case (i), then we are done. In other words, we only need to show that we can write any $(G_1, e, G_2)$ that is not covered by (i) as a linear combination of elements in $\cO_n(X).$ Again by Lemma \ref{propogra}/(iii), for $i=1, 2,$ we have that $o_{G_i}$ is a consistent tree on $X_i,$ and there is a unique source $r_i.$ If $G_1$ has a leaf $u \neq x,$ let $\tilde{e}$ be the edge adjacent to $u$ in $G_1$ and $w$ be the other end of $\tilde{e}.$ Since $G_1$ is a consistent tree, the orientation of $\tilde{e}$ is $w \to u$ and the color of $\tilde{e}$ is red if $w < u$ and is blue if $w > u.$ Since $u \neq x,$ $u$ is still a leaf in the new graph $(G_1, e, G_2)$. Hence, $(G_1, e, G_2)$ is in case (i). Similarly, if $G_2$ has a leaf $u' \neq y,$ we have that $(G_1, e, G_2)$ is in case (i) as well. Therefore, the only possibility that $(G_1, e, G_2)$ is not covered by (i) happens if, for $i = 1$ and $2,$ $o_{G_i}$ is a tree with only one leaf $x$ or $y$. In other words, $o_{G_i}$ is a directed path from $r_i$ to $x$ or $y.$ Now, we will  deal with this situation separately for  cases (ii) and (iii).

For case (ii), recall $x_n$ is the largest letter in $X,$ so we have $x > y.$ Since $e$ does not satisfy the conditions in (i), we have that $x = x_n$ is not a leaf. Hence, $n_1 = |X_1| > 1$, and there exists a unique vertex $z$ in $G_1$ that is connected to $x$ by an edge $e'.$ Since $x=x_n > z$ and $G_1$ is consistent, we conclude that $e'$ is red. Because $e$ is consistent and is colored red, the orientation of $e$ is $y \to x.$ It is easy to see that $(G_1, e, G_2)$ consists of two directed paths from $r_1$ to $x$ and from $r_2$ to $x$, and the last edge on each path are the red edges $e' = (z \to x)$ and $e = (y \to x).$  We apply (J1) (and (S1)) defined in Definition \ref{ocomb} to the subgraph of $(G_1, e, G_2)$ that consists of edges $e$ and $e',$ and we get that $(G_1, e, G_2)$ is equal to a sum of two graphs that are both covered by case (i). 

For case (iii), similarly to case (ii), we can show that $(G_1, e, G_2)$ consists of two directed paths from $r_1$ to $y$ and from $r_2$ to $y$, and the last edge on the former path is the blue edge $e = (x \to y)$ and the last edge on the latter path is an edge $e' = (z \to y)$, for some $z \in X_2.$ The color of $e'$ can be either red or blue.  If $e'$ is blue, since $e' = (z \to y) \in G_2$ is consistent, we have $z > y.$ Similarly to before,  by applying (J2) (and (S2)) defined in Definition \ref{ocomb} to the subgraph of $(G_1, e, G_2)$ that consists of edges $e$ and $e',$ we get that $(G_1, e, G_2)$ is equal to a sum of two graphs that are both covered by case (i). If $e'$ is red, the fact that $e'$ is consistent implies that $z < y.$ Thus $z < y < x = x_n.$ Applying (MJ) (and symmetry combinations) defined in Definition \ref{ocomb} to the subgraph of $(G_1, e, G_2)$ that consists of edges $e$ and $e',$ we get that $(G_1, e, G_2)$ is equal to a sum of five graphs that are covered either by case (i) or by case (ii).

\end{proof} 

\section{A perfect pairing}\label{perfpair}

We have shown that $\cB_n(X)$ spans $\Lie_2(n)$ (Proposition \ref{liespan}) and $\cO_n(X)$ spans $\Eil_2(n)$ (Proposition \ref{eilspan}). We will show in this section that the matrix of the pairing  $\llangle , \rrangle$ between $\cB_n(X)$ and $\cO_n(X)$ is nonsingular, and then conclude the following theorem.
\begin{thm}\label{main3}
The pairing $\llangle , \rrangle$ between $\Lie_2(n)$ and $\Eil_2(n)$ is perfect.
\end{thm}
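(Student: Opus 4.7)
The plan is to prove Theorem \ref{main3} by showing that the square matrix
$$ M \;=\; \bigl(\llangle o_G,\, b_{G'}\rrangle\bigr)_{G,\,G' \in \overline{\cG}_n} $$
is upper triangular with unit diagonal entries, relative to a suitable total order $\prec$ on $\overline{\cG}_n$. Since $\cB_n(X)$ spans $\Lie_2(n)$ (Proposition \ref{liespan}) and $\cO_n(X)$ spans $\Eil_2(n)$ (Proposition \ref{eilspan}), and both sets have cardinality $n^{n-1}$, the unimodularity of $M$ will simultaneously imply that $\cB_n(X)$ and $\cO_n(X)$ are $R$-bases and that the induced pairing is perfect.

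For the diagonal entries, I would show $\llangle o_G, b_G \rrangle = \pm 1$ by induction on $n$, following the recursive construction of Definition \ref{defBn}. Let $r$ be the root of $G$ and let $e^* = \{r, c\}$ be the edge selected by the recursion (the smallest red-connected $c$, or the largest neighbor if all edges at $r$ are blue). By construction the color of $e^*$ matches the color of the outermost bracket of $b_G$ (the color of the root of the binary tree), and the unique path in $b_G$ between the leaves $r$ and $c$ passes through that root, so the nadir of $p_{b_G}(e^*)$ is precisely the root of $b_G$. Removing $e^*$ splits $G$ into a subtree rooted at $r$ and a subtree rooted at $c$, which match the left and right subtrees of $b_G$; applying the induction hypothesis to each piece produces a color-preserving bijection between the remaining edges of $o_G$ and internal vertices of $b_G$, and the sign conventions combine to give $\pm 1$.

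For the off-diagonal vanishing, the key observation is that if $\llangle o_G, b_{G'}\rrangle \neq 0$, the outermost bracket of $b_{G'}$ partitions $X$ into the leaf sets $X_1'$ and $X_2'$ of its left and right subtrees. Since $G$ is a tree, there is a \emph{unique} edge $e_G(G')$ of $G$ joining $X_1'$ with $X_2'$, and this edge must be the one that $\beta_{o_G, b_{G'}}$ sends to the root of $b_{G'}$; in particular its color must agree with the color of that root. Iterating on the two subtrees, a nonzero pairing forces a rigid recursive correspondence between edge-cuts of $G$ and the binary-tree structure of $b_{G'}$. I would then define $\prec$ recursively, comparing trees first by the data of the top-level edge-cut (partition and color) and then recursively on the two halves; the choices enshrined in Definition \ref{defBn} for $G$ itself yield the ``smallest'' such data, so that $G' \prec G$ whenever $G' \neq G$ and $\llangle o_G, b_{G'}\rrangle \neq 0$.

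The main obstacle will be setting up $\prec$ carefully enough for the recursive triangularity argument to close, and then checking it against both branches of Definition \ref{defBn}/(ii): the case where a red edge is available at the root, where the \emph{smallest} red-connected neighbor is chosen, versus the all-blue case, where the \emph{largest} neighbor is chosen. In particular one must confirm that any competing partition $(X_1',X_2')$ arising from $G' \neq G$ either violates the color requirement (yielding zero) or corresponds to an edge of $G$ strictly ``later'' in the recursion than the distinguished $e^*$. Once triangularity is in place, $\det(M) = \pm 1 \in R^\times$ shows that $M$ is invertible; consequently $\cB_n(X)$ is a basis for $\Lie_2(n)$, $\cO_n(X)$ is a basis for $\Eil_2(n)$, and the pairing $\llangle\,,\rrangle$ between them is perfect, establishing Theorem \ref{main3}.
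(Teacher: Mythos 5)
Your overall architecture is the same as the paper's: both proofs reduce Theorem \ref{main3} to showing that the matrix $\bigl(\llangle o_G, b_{G'}\rrangle\bigr)$ is triangular with $\pm 1$ diagonal for a suitable order on $\overline{\cG}_n = \cR_n$, and your diagonal computation (induct on $n$, match the distinguished edge $e^*$ at the root of $G$ with the root of the binary tree $b_G$, recurse on the two pieces) is essentially the paper's proof of Proposition \ref{vanish4}/(ii). The logic that unimodularity of the matrix plus the two spanning results yields both bases and perfectness is also the paper's.

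The genuine gap is in the off-diagonal step, which you correctly flag as ``the main obstacle'' but whose sketch would not close as written. First, a small misattribution: the uniqueness of the edge of $G$ crossing the cut $(X_1', X_2')$ does not follow from $G$ being a tree (a tree can have many edges crossing a given vertex bipartition); it follows from the requirement that $\beta_{o_G, b_{G'}}$ be a bijection, since every crossing edge has its nadir at the root of $b_{G'}$. More seriously, your proposed order --- ``compare the top-level edge-cut data, then recurse on the two halves'' --- does not mesh with the recursion, because the unique crossing edge of $G$ forced by $G'$ is in general an \emph{arbitrary} edge of $G$, not incident to the root of $G$ and not the distinguished edge $e^*(G)$; so the ``two halves'' of $G$ being compared are not the halves of $G$'s own top-level decomposition, and the inductive comparison is between a decomposition of $G$ at an arbitrary edge (reassembled at the roots of the pieces) and $G'$'s decomposition at its root. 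Handling exactly this is what the paper's relation $\to_{\op}$ and Corollary \ref{opind2} are for: one must show that cutting $G$ at any edge, replacing each piece by something larger, and reattaching at the roots produces something $\ge_{\op} G$. Finally, you never verify that $\prec$ is a well-founded partial order (antisymmetry is the nontrivial point); in the paper this requires the auxiliary index order $\le_{\ind}$ and Lemma \ref{refine}. Without constructing the order and proving these two facts, the triangularity claim is asserted rather than proved.
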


We first review some terminology related to orderings on a set. (See Chapter 3 in \cite{ec1} for details.)
\begin{defn}
Given a set $S,$ we say a binary relation $\le$ on $S$ is a {\it partial order} if it satisfies the following three axioms:
\begin{ilist}
\itm Reflexivity: for all $a \in S,$ $a \le a.$
\itm Antisymmetry: if $a \le b$ and $b \le a$, then $a = b.$
\itm Transitivity: if $a \le b$ and $b \le c,$ then $a \le c.$
\end{ilist}
 Given a partial order $\le$ on a set $S,$ we use the notation $a < b$ to mean $a \le b$ and $a \neq b.$

 A partial order on $S$ is a {\it total order} if any two elements $a, b$ of $S$ are {\it comparable}, i.e., either $a \le b$ or $b \le a.$ 
 Given two binary relations $\sim_1$ and $\sim_2$ on $S,$ we say $\sim_1$ is a {\it refinement} of $\sim_2$ if for any $a, b \in S,$ $a \sim_2 b$ implies $a \sim_1 b.$ 
 If a total order $\le_1$ is a refinement of a partial order $\le_2,$ we call $\le_1$ a {\it linear extension} of $\le_2.$
 
\end{defn}

Note that both $\cB_n(X)$ and $\cO_n(X)$ are indexed by $\overline{\cG}_n = \cR_n,$ so it is natural to give the following definition. 
\begin{defn} Given a total order $\le$ on $\overline{\cG}_n = \cR_n,$ suppose under $\le$, the two-colored graphs in $\overline{\cG}_n$ (or the rooted trees in $\cR_n$) are ordered as $G_1 < G_2 < \cdots < G_{n^{n-1}}.$ We define {\it the matrix of the pairing $\llangle , \rrangle$ with respect to $\le$} between $\cB_n(X)$ and $\cO_n(X)$, denoted by $\cM_{n}^\le$, to be the $n^{n-1} \times n^{n-1}$ matrix where the $(i,j)$-entry is given by $\llangle o_{G_i}, b_{G_j} \rrangle,$ for $1 \le i, j \le n^{n-1}.$ 
\end{defn}

To show $\llangle , \rrangle$ is perfect between $\Lie_2(n)$ and $\Eil_2(n)$, it is enough to show $\cM_n^\le$ is nonsingular for some/all total order(s) $\le$ on $\overline{\cG}_n = \cR_n.$ Our goal is to find a certain total order such that it is relatively easier to show $\cM_n^\le$ is nonsingular. In fact, what we will do is to find a partial order on $\overline{\cG}_n = \cR_n$ such that any linear extension of this partial order has the desired properties. 
The plan for the rest of the section is as follows: We will define two kinds of binary relations $\le_{\ind}$ (Definition \ref{partind}) and $\le_{\op}$ (Definition \ref{partop}) on $\overline{\cG}_n = \cR_n$, where $\le_{\ind}$ is easily shown to be a partial order and $\le_{\op}$ has a close relationship to the construction of $\cB_n(X)$ (Lemma \ref{vanish3}). We then show that $\le_{\ind}$ is a refinement of $\le_{\op}$ (Lemma \ref{refine}) and use the fact that $\le_{\ind}$ is a partial order to deduce that $\le_{\op}$ is a partial order as well. Finally, we will be able to show $\cM_n^\le$ is an upper triangular matrix when $\le$ is a linear extension of $\le_{\op},$ (Proposition \ref{nonsing}), which leads to our main theorems.

\begin{defn}\label{l-level}Suppose $G$ is a two-colored rooted tree in $\overline{\cG}_n = \cR_n$ with root $r.$ 
\begin{alist}
\itm For any edge $e = \{x, y\}$ in $G$, where $x$ is closer to the root $r$ than $y,$ (or equivalently, $x$ is the parent of $y$,) we define $x$ to be the {\it tail} of $e$, and $y$ to be the {\it head} of $e.$ Note that because we can consider $G$ as a rooted tree, the map ($e \mapsto $ head of $e$) gives a bijection between the set of edges in $G$ and the set of non-root vertices in $G.$ We define the inverse map $e,$ that is, for any non-root vertex $x,$ we denote by $e(x)$ the unique edge in $G$ such that the head of $e(x)$ is $x.$ In other words, $e(x)$ is the first edge in the unique path from $x$ to the root $r$ in $G.$
\itm We define the {\it index} of $G$ to be 
$\iota(G) = (k_1, k_2, \dots, k_n),$
where $$k_i = \begin{cases}
+1, &  \mbox{if $x_i$ is the head of a red edge in $G$;} \\
-1, &  \mbox{if $x_i$ is the head of a blue edge in $G$;} \\
0, & \mbox{if $x_i = r,$ is the root of $G$.}
\end{cases}
$$
We use the reverse lexicographic order to order the indices of graphs, i.e. $\iota(G) <_{\rlex} \iota(H)$ if and only if the rightmost nonzero entry in $\iota(G) - \iota(H)$ is negative. 
\itm We call a vertex $x$ an {\it $\ell$-level vertex} of $G$ if the length of the unique path from $x$ to the root $r$ is $\ell.$ 
For any non-root vertex $x,$ by removing the edge $e(x),$ we divide $G$ into two graphs. We denote by $G(x)$ and $\tilde{G}(x)$ the subgraphs including $x$ and $r,$ respectively. By convention, we define $G(r)$ to be the original graph $G.$
Note when consider $G(x)$ as a rooted tree, $x$ is its root. We call $G(x)$ an {\it $\ell$-level subgraph} of $G$ if $x$ is an $\ell$-level vertex.
\end{alist}
\end{defn}

\begin{defn}\label{partind}
We define a binary relation $\le_{\ind}$ on $\overline{\cG}_n = \cR_n$ recursively.

For any two graphs $G_1$ and $G_2$ in $\overline{\cG}_n = \cR_n$ with root $r_1$ and $r_2,$ 
if $n=1,$  then $G_1 = G_2 = x_1,$ and we define $G_1 \le_{\ind} G_2.$

If $n > 1,$ suppose we have defined a binary relation $\le_{\ind}$ on $\overline{\cG}_m = R_m$, for any $m < n.$ There are three situations where we define $G_1 \le_{\ind} G_2$.
\begin{ilist}
\itm If $G_1$ and $G_2$ have the same numbers of blue edges and red edges and $\iota(G_1) <_{\rlex} \iota(G_2),$ we define $G_1 \le_{\ind} G_2$.
\itm If $\iota(G_1) = \iota(G_2)$ (note this implies that $r_1 = r_2,$ and $G_1$ and $G_2$ have the same numbers of blue edges, and red edges), and the number of $1$-level vertices (and/or subgraphs) of $G_1$ is less than that of $G_2,$ then we define $G_1 \le_{\ind} G_2.$
\itm If $\iota(G_1) = \iota(G_2),$ and $G_1, G_2$ have same number of $1$-level vertices (and/or subgraphs), then for $i=1$ and $2,$ let $G_{i,1}, \dots, G_{i,k}$ be $1$-level subgraphs of $G_i.$ If for any $1 \le j \le k,$ we have that $G_{1,j}$ and $G_{2,j}$ are on a same vertex set, and also $G_{1,j} \le_{\ind} G_{2,j},$ then we define $G_1 \le_{\ind} G_2.$
\end{ilist}
\end{defn}

\begin{lem}
$\le_{\ind}$ is a well-defined partial order on $\overline{\cG}_n = \cR_n$.
\end{lem}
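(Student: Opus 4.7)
I would argue by strong induction on $n = |X|$ that $\le_{\ind}$ is well-defined and satisfies the three partial-order axioms on $\overline{\cG}_n = \cR_n$. The base $n=1$ is immediate since $\overline{\cG}_1$ has a single element. So fix $n > 1$ and assume the statement for all $m < n$. For \emph{well-definedness}, each recursive invocation in case (iii) is on a $1$-level subgraph, whose vertex set is a proper subset of $X$, so the recursion descends strictly in alphabet size and the induction hypothesis applies. To interpret ``$G_{1,j}$ and $G_{2,j}$ are on a same vertex set'' unambiguously, I would list the $1$-level subgraphs $G_{i,1}, \ldots, G_{i,k}$ of $G_i$ by increasing minimum vertex; the hypothesis of case (iii) then forces the partitions of $X \setminus \{r\}$ determined by $G_1$ and $G_2$ to coincide, after which the matching of the $G_{1,j}$ to the $G_{2,j}$ is unique. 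The three cases (i), (ii), (iii) are pairwise mutually exclusive, since they place incompatible constraints on $\iota$ and on the count of $1$-level vertices, so no ambiguity arises. For \emph{reflexivity}, the choice $G_1 = G_2 = G$ falls immediately into case (iii): the indices, $1$-level counts, and individual $1$-level subgraphs coincide, and the induction hypothesis supplies $G(x) \le_{\ind} G(x)$ for each $1$-level vertex $x$.

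For \emph{antisymmetry}, suppose both $G_1 \le_{\ind} G_2$ and $G_2 \le_{\ind} G_1$. Case (i) cannot witness either direction since $<_{\rlex}$ is a strict order, and case (ii) cannot either because its strict inequality on the $1$-level vertex count is symmetry-breaking. Both relations must therefore hold by case (iii); this forces $\iota(G_1) = \iota(G_2)$ (so $G_1, G_2$ share the root $r$), the partitions of $X \setminus \{r\}$ into $1$-level vertex sets to coincide, and each pair $(G_{1,j}, G_{2,j})$ to satisfy the two-sided inequality. The induction hypothesis yields $G_{1,j} = G_{2,j}$ for every $j$, and reassembling the subgraphs at the shared root recovers $G_1 = G_2$.

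For \emph{transitivity}, suppose $G_1 \le_{\ind} G_2$ is witnessed by case $C$ and $G_2 \le_{\ind} G_3$ by case $C'$. The key observation is that cases (ii) and (iii) preserve the index exactly while (i) advances it strictly in $<_{\rlex}$, and that (iii) preserves the $1$-level vertex count while (ii) strictly decreases it. A nine-way case split collapses along these monotonicity properties as follows. If either of $C, C'$ is (i), then $\iota(G_1) <_{\rlex} \iota(G_3)$ with matching red/blue edge counts throughout the chain, so case (i) gives $G_1 \le_{\ind} G_3$. If both $C, C' \in \{\mathrm{(ii)}, \mathrm{(iii)}\}$ with at least one equal to (ii), then the indices satisfy $\iota(G_1) = \iota(G_2) = \iota(G_3)$ and the $1$-level vertex counts obey a strict inequality along the chain, so case (ii) applies. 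Finally, if both $C$ and $C'$ are (iii), then the $1$-level vertex partitions of $X \setminus \{r\}$ agree across all three graphs, and applying the induction hypothesis to each matched triple $(G_{1,j}, G_{2,j}, G_{3,j})$ produces $G_{1,j} \le_{\ind} G_{3,j}$, so (iii) applies to conclude $G_1 \le_{\ind} G_3$.

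\textbf{Main obstacle.} The delicate point is the consistent identification of $1$-level subgraphs when chaining case (iii) instances; once the canonical enumeration by minimum vertex is in place, the partition-matching condition propagates unambiguously through both the antisymmetry argument and the transitivity case analysis, and all subcases reduce cleanly to the induction hypothesis.
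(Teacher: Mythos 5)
Your proof is correct and follows essentially the same strategy as the paper's: induction on $n$, with antisymmetry and transitivity reduced to the $1$-level subgraphs via the induction hypothesis, and the three defining cases separated by whether the index or the $1$-level vertex count changes. The extra care you take in canonically enumerating the $1$-level subgraphs and in spelling out reflexivity is a harmless elaboration of details the paper leaves implicit.
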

\begin{proof}
We prove the lemma by induction on $n$, the size of the alphabet $X.$ When $n=1,$ there is only one element in $\overline{\cG}_n,$ so $\le_{\ind}$ is well-defined. Assuming that for any alphabet of size smaller than $n,$ we have $\le_{\ind}$ well-defined, we check the case $|X|= n.$ 

It is clear that $\le_{\ind}$ is reflexive. 

If $G_1 \le_{\ind} G_2$ and $G_2 \le_{\ind} G_1$, by the definition of $\le_{\ind}$ we must have $\iota(G_1) = \iota(G_2),$ and $G_1, G_2$ have same number of $1$-level vertices (or/and subgraphs). In addition, suppose  $G_{i,1}, \dots, G_{i,k}$ are $1$-level subgraphs of $G_i$ for $i =1,2$; then $G_{1,j} \le_{\ind} G_{2,j}$ and $G_{2,j} \le_{\ind} G_{1,j}$ for any $1 \le j \le k.$ By the induction hypothesis, $G_{1,j}= G_{2,j}$ for all $j.$ In particular, $G_{1,j}$ and $G_{2,j}$ has the same root, say, $r_j.$ Since $\iota(G_1) = \iota(G_2),$ the edges connecting $r_j$ and $r$ in $G_1$ and $G_2$ have the same color. Therefore, $G_1 = G_2.$ Thus, $\le_{\ind}$ is antisymmetric.

If $G_1 \le_{\ind} G_2$ and $G_2 \le_{\ind} G_3,$ we discuss three possibilities. If $\iota(G_1) <_{\rlex} \iota(G_2)$ or $\iota(G_2) <_{\rlex} \iota(G_3),$ we must have $\iota(G_1) <_{\rlex} \iota(G_3)$. Thus, $G_1 \le_{\ind} G_3.$ Now we can assume $\iota(G_1) = \iota(G_2) = \iota(G_3).$ If  the number of $1$-level vertices of $G_1$ is less than that of $G_2$ or the number of $1$-level vertices of $G_2$ is less than that of $G_3,$ then similarly we have $G_1 \le_{\ind} G_3.$ If the numbers of $1$-level vertices of $G_1, G_2$ and $G_3$ are the same, then for all $j,$ $G_{1,j}, G_{2,j}, G_{3,j}$ are on the same vertex set, and $G_{1,j} \le_{\ind} G_{2,j}$ and $G_{2,j} \le_{\ind} G_{3,j},$ where $G_{i,j}$'s are the $1$-level subgraphs of $G_i.$ By the induction hypothesis, $G_{1,j} \le_{\ind} G_{3,j}.$ Hence, $G_1 \le_{\ind} G_3.$ Thus, $\le_{\ind}$ is transitive. 

Therefore, $\le_{\ind}$ is a well-defined partial order on $\overline{\cG}_n = \cR_n$.
\end{proof}


Let $X_1 \cup X_2$ be a disjoint partition of $X.$ Suppose $G_i \in {\cG}_{|X_i|}$ and $x_i \in X_i$ for $i=1,2.$ Let $e$ be an edge $\{x_1, x_2\}$ with some color (red or blue). We denote by $(G_1, e,   G_2)$ the graph obtained by adding $e$ to connect $G_1$ and $G_2.$ We state without proof in the following lemma two results on $(G_1, e, G_2)$ when $G_i \in \overline{\cG}_{|X_i|} = \cR_{|X_i|}$ for each $i.$

\begin{lem}\label{connectRT}
Suppose $G_i \in \overline{\cG}_{|X_i|} = \cR_{|X_i|}$ and $x_i \in X_i$ for $i=1,2.$ Let $e$ be an edge $\{x_1, x_2\}$ with some color (red or blue). Then we have the following results.
\begin{ilist}
\itm If $x_i$ is the root of $G_i$ for each $i,$ then the two-colored graph $(G_1, e,   G_2)$ is in $\overline{\cG}_n = \cR_n.$ In particular, when we consider it as a rooted tree, its root is $\min(x_1, x_2)$ if $e$ is red and is $\max(x_1, x_2)$ if $e$ is blue.
\itm If $x_1$ is a non-root vertex of $G_1$ and $x_2$ is the root of $G_2,$ then the two-colored graph $(G_1, e,   G_2)$ is in $\overline{\cG}_n = \cR_n$ if and only if $x_1 < x_2$ and $e$ is red, or $x_1 > x_2$ and $e$ is blue.
\end{ilist}
\end{lem}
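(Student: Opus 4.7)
My approach rests on the structural interpretation of $\overline{\cG}_n = \cR_n$ established in Section~2: a two-colored graph $T$ lies in $\overline{\cG}_n$ precisely when $T$ is a connected acyclic graph in which each edge is consistent with its color (red = increasing, blue = decreasing) and each vertex has at most one parent edge. The three forbidden patterns $1r3r2$, $2b1b3$, $1r2b3$ are nothing but the three ways in which a single vertex could acquire two parent edges: two red edges with a common larger endpoint, two blue edges with a common smaller endpoint, and one red plus one blue sharing the same ``child'' vertex. Since $G_1$ and $G_2$ already lie in their respective $\overline{\cG}_{|X_i|}$ and are pattern-free, any forbidden pattern in $(G_1, e, G_2)$ must involve the new edge $e$; this is the key reduction I would use throughout.

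For part (i), both $x_1$ and $x_2$ are roots of their respective subtrees and carry no parent edges in $G_1 \cup G_2$. Adding $e$ confers a parent on exactly one of them and leaves the other a root. If $e$ is red, the increasing-edge rule forces the smaller of $x_1, x_2$ to be the parent, so the combined graph is still a rooted tree with root $\min(x_1, x_2)$; the case of $e$ blue is symmetric, yielding $\max(x_1, x_2)$ as the root. A short verification eliminates the three forbidden patterns: the only candidate pattern uses $e$ together with some other edge incident to $x_1$ or $x_2$, but the only old edges incident to $x_i$ sit inside $G_i$ and, since $x_i$ is the root of $G_i$, they make $x_i$ the parent (not the child), so no vertex ends up with two incoming edges.

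For part (ii), the essential new feature is that $x_1$ already has a parent edge $e(x_1)$ in $G_1$, so any choice of $e$ that attempts to make $x_1$ a child of $x_2$ manufactures a second incoming edge at $x_1$, hence a forbidden pattern. I plan to dispatch the four combinations of (color of $e$) $\times$ (sign of $x_2 - x_1$) separately. The two ``good'' cases are exactly those in which the coloring of $e$ is consistent with $x_1$ being the parent of $x_2$, namely $e$ red with $x_1 < x_2$, or $e$ blue with $x_1 > x_2$; in both, $e$ plays the role of a parent edge for $x_2$ (still a root of $G_2$), and the same type of pattern check from part~(i) applies without change. In the two ``bad'' cases, $e$ is colored so as to orient into $x_1$; combining $e$ with the existing parent edge $e(x_1)$ yields two incoming edges at $x_1$, and one inspects case-by-case that whether $e(x_1)$ is red or blue, the resulting pair realizes one of $1r3r2$, $2b1b3$, or $1r2b3$. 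The only real obstacle is the bookkeeping of these sub-cases, but this becomes mechanical once one remembers that the three forbidden patterns are exactly the parent-conflict configurations, so every case either produces a conflict at $x_1$ or produces no conflict anywhere.
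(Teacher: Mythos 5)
Your proof is correct. The paper explicitly states Lemma \ref{connectRT} without proof, and your argument is precisely the intended one: it reduces both parts to the characterization, established in the bijection lemma of Section~2, that membership in $\overline{\cG}_n$ for a connected acyclic two-colored graph is equivalent to every vertex having at most one incoming edge under the canonical orientation (red oriented small-to-large, blue large-to-small), after which the case analysis at $x_1$ and $x_2$ is exactly the bookkeeping you describe.
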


Lemma \ref{connectRT}/(i) states that if we connect two two-colored rooted trees (on two disjoint alphabets) by adding an edge connecting their roots, then the new two-colored graph is still a two-colored rooted tree. We will use this result in the construction of the second partial order $\le_{\op}$ on $\overline{\cG}_n$.

\begin{defn}
Suppose $G$ is a two-colored graph (or rooted tree) in $\overline{\cG}_n = \cR_n$ with root $r.$ 
\begin{alist}
\itm Let $y$ be a non-root vertex of $G$ and $e = e(y).$ Let $e' = \{r,y\}$ with the same color as $e.$ We define {\it the graph operated from $G$ with respect to $y$} to be the graph $$H := (\tilde{G}(y), e', G(y)).$$
Figure \ref{oper} shows how we construct $H$ from $G$ and $y$. By Lemma \ref{connectRT}/(i), we have that $H$ is in $\overline{\cG}_n.$

\begin{figure}
\begin{center}
\input{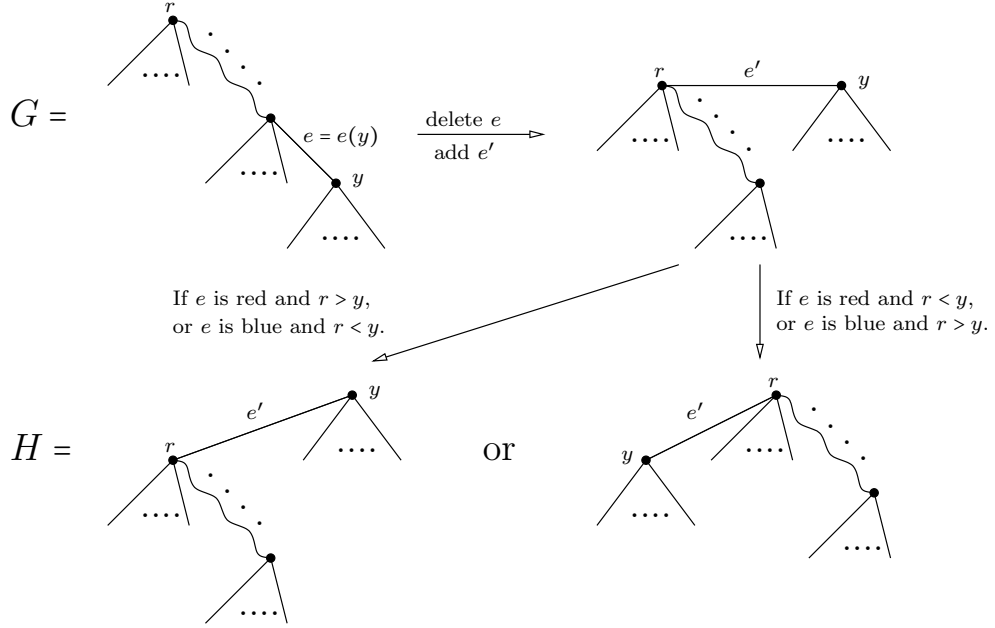}
\caption{$H$ is the graph operated from $G$ with respect to $y.$ Note that the color of $e'$ is the same as $e.$}
\label{oper}
\end{center}
\end{figure}



\itm We define a binary relation $\to_{\op}$ on $\overline{\cG}_n = \cR_n$ recursively. For any distinct two-colored graphs (or rooted trees) $G, H$ in $\overline{\cG}_n = \cR_n,$ we write $G \to_{\op} H$ if one of the following is satisfied.
\begin{itemize}
\item $H$ is operated from $G$ with respect to $y$, for some non-root vertex $y.$
\item If there exists a $1$-level vertex $x$ of $G,$ such that $H$ is obtained from $G$ by replacing the $1$-level subgraph $G(x)$ with $H',$ where $G(x) \to_{\op} H'.$ In other words, $H = (\tilde{G}(x), e', H'),$ where $e' $ is an edge connecting $r,$ the original root of $G,$ and the root $y$ of $H',$ with the same color as $e = e(x).$ Figure \ref{oper3} shows how we obtain $H$ from $G$ in this case. In Figure \ref{oper3}, we do not explicitly mark the root of $H$.  The root is determined by Lemma \ref{connectRT}/(i) as shown in Figure \ref{oper}.

\begin{figure}
\begin{center}
\input{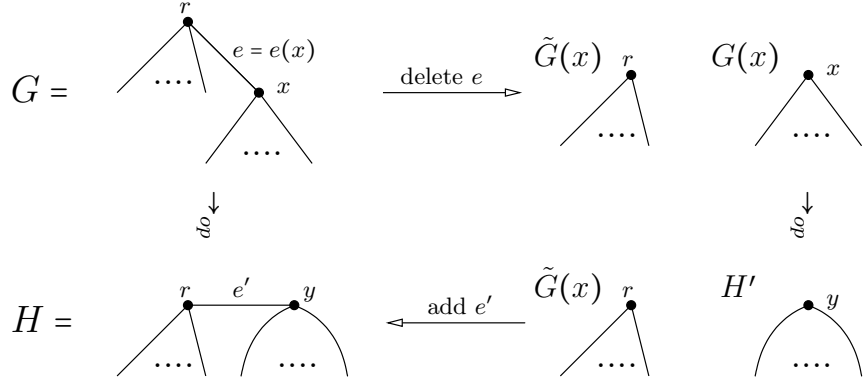}
\caption{$G \to_{\op} H,$  corresponding to a $1$-level vertex $x.$ Note the color of $e'$ is the same as $e.$}
\label{oper3}
\end{center}
\end{figure}
\end{itemize}
\end{alist}
\end{defn}

\begin{rem}\label{non1level}
In our definition of $G \to_{\op} H,$ we require $G$ and $H$ to be different graphs. Therefore, we do not have $G \to_{\op} G$ for any $G \in \overline{\cG}_n.$ Thus, if $H$ is operated from $G$ with respect to $y$ for some non-root vertex $y,$ then $G \to_{\op} H$ if and only if $y$ is not a $1$-level vertex. However, it is possible to modify our definition to include ``$G \to_{\op} G$'', i.e., removing the requirement that $G \neq H$. All the results related to this binary relation $\to_{\op}$ still hold under this modification. We choose to exclude ``$G \to_{\op} G$'' from our definition of $\to_{\op}$ to avoid trivial relations.
\end{rem}

\begin{ex} Figure \ref{exop} shows how the binary relation $\to_{\op}$ is defined on $\overline{\cG}_3 = \cR_3$ together with the index of each graph. 
\begin{figure}
\begin{center}
\input{exop.pstex_t}
\caption{The nine graphs in $\overline{\cG}_3 = \cR_3$, their indices, and the binary relation $\to_{\op}.$}
\label{exop}
\end{center}
\end{figure}
\end{ex}

\begin{defn}\label{partop}
We define a binary relation $\le_{\op}$ on $\overline{\cG}_n$ generated by $\to_{\op}:$ for any $G, H \in \overline{\cG}_n,$ $G \le_{\op} H$ if there exist $k \ge 0$ and a sequence of graphs $G_0 = G, G_1, \dots, G_{k-1}, G_k = H$ in $\overline{\cG}_n$ such that 
$$G_0 \to_{\op} G_1 \to_{\op} \cdots \to_{\op} G_{k-1} \to_{\op} G_k.$$
\end{defn}

\begin{rem}
Because we allow $k = 0$ in the definition of $\le_{\op},$ we have $G \le_{\op} G,$ for any $G \in \overline{\cG}_n.$ Therefore, if $H$ is operated from $G$ with respect to some non-root vertex $y,$ we always have $G \le_{\op} H,$ even if $y$ is a $1$-level vertex.
\end{rem}

\begin{lem}\label{wldfop}
$\le_{\op}$ is a well-defined partial order on $\overline{\cG}_n = \cR_n$. 
\end{lem}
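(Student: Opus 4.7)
The plan is to reduce the verification to antisymmetry, the other two axioms being immediate from the construction of $\le_{\op}$ as the reflexive-transitive closure of $\to_{\op}$. Reflexivity ($G \le_{\op} G$) holds by taking $k=0$ in Definition \ref{partop}, while transitivity follows by concatenating a chain $G = G_0 \to_{\op} \cdots \to_{\op} G_k = H$ with a chain $H = H_0 \to_{\op} \cdots \to_{\op} H_\ell = K$ into a chain from $G$ to $K$.

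For antisymmetry, I would use the refinement approach foreshadowed in the preceding discussion: establish first, as the separate Lemma \ref{refine}, that $\le_{\ind}$ is a refinement of $\le_{\op}$, i.e., $G \le_{\op} H$ implies $G \le_{\ind} H$. Granted this, if $G \le_{\op} H$ and $H \le_{\op} G$, then $G \le_{\ind} H$ and $H \le_{\ind} G$, so $G = H$ by the antisymmetry of $\le_{\ind}$ already established.

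The refinement lemma in turn reduces, by transitivity of $\le_{\ind}$, to the single-step claim: whenever $G \to_{\op} H$, we have $G \le_{\ind} H$. I would prove this by induction on $n = |X|$, splitting along the two clauses in Definition \ref{partop}. In the recursive case (the operation happens inside a $1$-level subgraph $G(x)$), the index $\iota$ and the number of $1$-level vertices are both preserved, and the induction hypothesis applied to $G(x) \to_{\op} H'$ gives $G(x) \le_{\ind} H'$; clause (iii) of Definition \ref{partind} then yields $G \le_{\ind} H$. In the remaining case (operation at a non-$1$-level, non-root vertex $y$, so that $e = e(y) = \{p, y\}$ with $p \neq r$ is replaced by $e' = \{r, y\}$ of the same color), only the $\iota$-entries at positions $r$ and $y$ can change; a short case analysis over the color of $e$ versus whether $r < y$ or $r > y$, using Lemma \ref{connectRT}/(i) to locate the root of $H$, shows that either $\iota(G) <_{\rlex} \iota(H)$ (invoking clause (i)) or $\iota(G) = \iota(H)$ together with $y$ becoming a new $1$-level vertex of $H$ without displacing any existing one (invoking clause (ii)).

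The main obstacle will be this final case analysis: verifying across the four color-versus-order subcases that the index either strictly increases in reverse-lex, or is preserved together with a strict increase in the count of $1$-level vertices. The key ingredient is the color-consistency constraint on $\overline{\cG}_n$ — red edges increasing, blue edges decreasing — which ties the color of $e$ to the direction of the inequality between $r$ and $y$ and makes the comparison go in the right direction.
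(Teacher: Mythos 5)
Your proposal is correct and follows essentially the same route as the paper: reflexivity and transitivity are immediate from the definition of $\le_{\op}$ as the reflexive--transitive closure of $\to_{\op}$, and antisymmetry is deduced from Lemma \ref{refine} together with the already-established antisymmetry of $\le_{\ind}$. Your further sketch of the refinement lemma also mirrors the paper's proof of Lemma \ref{refine} (induction on $n$, splitting along the two clauses of $\to_{\op}$ and tracking $\iota$ and the count of $1$-level vertices), though note that in the recursive case the index need not be preserved --- one must still allow $\iota(G)<_{\rlex}\iota(H)$ there, which is exactly what the paper's Lemma \ref{operprop} supplies.
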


Before we prove Lemma \ref{wldfop}, we will investigate the connection between the binary relation $\to_{\op}$ and the index function $\iota( \ )$ on two-colored rooted trees, and then conclude a relationship between $\le_{\op}$ and $\le_{\ind}.$

\begin{lem}\label{operprop}
If $G, H \in \overline{\cG}_n = \cR_n$ and $G \to_{\op} H,$ then we have the following:
\begin{ilist}
\itm $G$ and $H$ have the same number of red edges and blue edges.
\itm $\iota(G) \le_{\rlex} \iota(H).$ In particular, if $\iota(G) <_{\rlex} \iota(H),$ then $\iota(G)$ and $\iota(H)$ differ at exactly two coordinates.
\end{ilist}
\end{lem}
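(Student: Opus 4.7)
The plan is to induct on $n = |X|$; the base case $n = 1$ is vacuous because there is no $G \neq H$ in $\overline{\cG}_1$. For the inductive step, assume the lemma holds for alphabets of size smaller than $n$ and suppose $|X| = n$ with $G \to_{\op} H$. By the definition of $\to_{\op}$, either (a) $H$ is operated from $G$ with respect to some non-root vertex $y$, or (b) $H = (\tilde{G}(x), e', H')$ for some $1$-level vertex $x$ of $G$ and some $H'$ with $G(x) \to_{\op} H'$.

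In case (a), part (i) is immediate since the removed edge $e(y)$ and the new edge $e' = \{r,y\}$ share the same color $c$. For part (ii), Lemma \ref{connectRT}(i) determines the root of $H$: it equals $r$ exactly when $c$ is red and $r < y$, or $c$ is blue and $r > y$; otherwise it equals $y$. If the root is preserved, every vertex's parent-edge color is unchanged and $\iota(G) = \iota(H)$. If the root shifts to $y$, then $\iota(G)$ and $\iota(H)$ differ only at the two positions $r$ and $y$, with $\iota(G)_r = 0$, $\iota(H)_r = c$ and $\iota(G)_y = c$, $\iota(H)_y = 0$; a short sign check using the root-shift condition shows that the rightmost nonzero entry of $\iota(G) - \iota(H)$ is negative.

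In case (b), let $c$ be the common color of $e = e(x)$ and $e'$, and let $y$ denote the root of $H'$. I will apply the induction hypothesis to $G(x) \to_{\op} H'$ (valid since $|X_2| < n$, where $X_2$ is the vertex set of $G(x)$), yielding part (i) for $G, H$ and $\iota(G(x)) \le_{\rlex} \iota(H')$ with at most two differing coordinates. A multiset argument, using that both $\iota(G(x))$ and $\iota(H')$ have exactly one zero entry (at their respective root), shows that when the inequality is strict the differing coordinates $\{u, v\}$ satisfy either $\{u, v\} = \{x, y\}$ with $x \neq y$ and values swapping $0$ and some color $c'$, or $\{u, v\} \cap \{x, y\} = \emptyset$ with $x = y$ and values swapping $+1$ and $-1$. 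I then compare $\iota(G)$ and $\iota(H)$ using the decomposition $X = X_1 \sqcup X_2$, where $X_1$ is the vertex set of $\tilde{G}(x)$: the indices always agree on $X_1 \setminus \{r\}$, and on $X_2$ one has $\iota(G)|_{X_2} = \iota(G(x)) + c\,\mathbf{1}_x$ and $\iota(H)|_{X_2} = \iota(H') + c\,\mathbf{1}_y$ when the root of $H$ is $r$, with an additional $c\,\mathbf{1}_r$ correction in the $X_1$ part when the root of $H$ is $y$. Splitting on the root of $H$ via Lemma \ref{connectRT}(i) and on the three multiset subcases above, the algebra produces $\iota(G) - \iota(H)$ supported on $\{u, v\}$, $\{x, y\}$, or $\{r, y\}$ respectively; in the middle case it simplifies to $(c-c')(\mathbf{1}_x - \mathbf{1}_y)$ (vanishing when $c = c'$), and in the last case Lemma \ref{connectRT}(i) combined with the sub-operation's root-shift signs forces $c = c'$, collapsing the support to $\{r, y\}$.

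The main obstacle will be the bookkeeping in case (b): verifying, in every subcase, that the rightmost nonzero coordinate is actually negative requires combining several elementary order facts, namely $r < x$ or $r > x$ depending on $c$, the $r$-versus-$y$ constraint from the root-of-$H$ subcase, and the sub-operation's root-shift rule relating $x$ and $y$. I expect most of the effort to go into tabulating the four possible $(c, c') \in \{\pm 1\}^2$ patterns and confirming the desired sign in each, while the overall structure is a fairly straightforward induction.
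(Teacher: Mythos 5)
Your proposal is correct and follows essentially the same route as the paper's proof: induction on $n$, a case split according to the two clauses defining $\to_{\op}$, tracking the root of $H$ via Lemma \ref{connectRT}(i), and explicit coordinate comparisons of $\iota(G)$ and $\iota(H)$ (your subcases organized by the root of $H$ and the colors $c,c'$ match the paper's subcases organized by the position of $r$ relative to $x$ and $y$). The only cosmetic differences are your vacuous base case at $n=1$ versus the paper's explicit check at $n=3$, and your cleaner difference-vector bookkeeping.
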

\begin{proof}
It is trivial to prove (i) by checking the definition of $\to_{\op}.$ We show (ii) by induction on $n$, the size of alphabet $X.$ Note that $\to_{\op}$ is only defined when $n \ge 3.$

When $n =3,$ one checks that the indices of the nine graphs in Figure \ref{exop} satisfy (ii). Now assume that $n > 3,$ and that (ii) holds when the size of $X$ is smaller than $n.$ 

Suppose $H$ is operated from $G$ with respect to $y$, for some vertex $y.$ If $e$ is red and $r >y$, or $e$ is blue and $r <y,$ then the root of $H$ is $y.$ The only difference between $\iota(G)$ and $\iota(H)$ is on the coordinates corresponding to $r$ and $y.$ One checks that $\iota(G) <_{\rlex} \iota(H)$ in both cases. If $e$ is red and $r < y$, or $e$ is blue and $r>y,$ then the root of $H$ is still $r,$ and it is easy to see that $\iota(G) = \iota(H).$ Thus, (ii) holds.

Suppose there exists a $1$-level vertex $x$ of $G,$ such that $H$ is obtained from $G$ by replacing the $1$-level subgraph $G(x)$ with $H',$ where $G(x) \to_{\op} H'.$ Let $y$ be the root of $H',$ and $r$ be the root of $G;$ then we have $H = (\tilde{G}(x), e', H'),$ where $e'=\{r, y\}$ has the same color as $e = e(x)$ in $G.$ The size of the alphabet of $G(x)$ and $H'$ is smaller than $n.$ By the induction hypothesis, we have $\iota(G(x)) \le_{\rlex} \iota(H'),$ and if $\iota(G(x)) <_{\rlex} \iota(H'),$ then $\iota(G(x))$ and $\iota(H')$ differ at exactly two coordinates. If $\iota(G(x)) = \iota(H'),$ then we have $x = y$ and $e' = e.$ Hence, the root of $H$ is still $r$ and $\iota(G) = \iota(H).$ Now we assume $\iota(G(x)) <_{\rlex} \iota(H'),$ and $\iota(G(x))$ and $\iota(H')$ differ at exactly two coordinates. If $x = y,$ then $e' = e$ and $r$ is the root of $H.$ Thus, $\iota(G)$ and $\iota(H)$ differ at exactly the same places as $\iota(G(x))$ and $\iota(H')$ differ. Therefore, (ii) holds.  If $x \neq y,$ it is clear that $\iota(G(x))$ and $\iota(H')$ differ at exactly the coordinates corresponding to $x$ and $y.$ The situation in this case is complicated; we only check the case when $x < y,$ and the case when $x >y$ is analogous. 

Since $x <y$, and $\iota(G(x))$ and $\iota(H')$ only differ at the coordinates corresponding to $x$ and $y$, we only write the two coordinates corresponding to $x$ and $y$ in the order of $(x, y)$ to present $\iota(G(x))$ and $\iota(H').$ Because $x$ and $y$ are the roots of $G(x)$ and $H'$ respectively, and $G(x)$ and $H$ have the same number of blue edges and red edges,  we have $\iota(G(x)) = (0, \epsilon),$ and $\iota(H') = (\epsilon, 0),$ where $\epsilon = \pm 1.$  We know $\iota(G(x)) = (0, \epsilon) <_{\rlex}(\epsilon, 0) = \iota(H').$ Hence $\epsilon = -1.$ Clearly, the only possible places $\iota(G)$ and $\iota(H)$ could differ are the coordinate corresponding to $r,$ $x$ and $y,$ so we only look at these three coordinates. We discuss the three cases according to the position of $r$ comparing with $x$ and $y.$ 
\begin{enumerate}
\item If $r < x < y,$ then the colors of $e$ and $e'$ are red, so $r$ is the root of $H.$ The coordinates of $\iota(G)$ and $\iota(H)$ corresponding to $r, x, y$ (in this order) are $(0, 1, -1)$ and $(0, -1, 1).$
\item If $x < r < y,$ then the colors of $e$ and $e'$ are blue, so $y$ is the root of $H.$ The coordinates of $\iota(G)$ and $\iota(H)$ corresponding to $x, r, y$ (in this order) are $(-1, 0, -1)$ and $(-1, -1, 0).$
\item If $x < y < r,$ then the colors of $e$ and $e'$ are blue, so $r$ is the root of $H.$ The coordinates of $\iota(G)$ and $\iota(H)$ corresponding to $x, y, r$ (in this order) are $(-1, -1, 0)$ and $(-1, -1, 0).$
\end{enumerate}
Therefore, in all cases, (ii) holds.

\end{proof}

\begin{lem}\label{refine}
$\le_{\ind}$ is a refinement of $\le_{\op}.$
\end{lem}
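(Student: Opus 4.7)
The plan is to reduce the claim to showing $G \to_{\op} H \Rightarrow G \le_{\ind} H$; once this is established, the transitivity of the already-proved partial order $\le_{\ind}$ promotes it to the chain statement $G \le_{\op} H \Rightarrow G \le_{\ind} H$ built into Definition \ref{partop}. I would prove the one-step version by induction on $n = |X|$, with $n \le 2$ trivial since $\to_{\op}$ is vacuous in that range.

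For the inductive step, Lemma \ref{operprop} supplies that $G$ and $H$ have the same numbers of red and blue edges and that $\iota(G) \le_{\rlex} \iota(H)$. If the inequality is strict, clause (i) of Definition \ref{partind} immediately gives $G \le_{\ind} H$. Otherwise $\iota(G) = \iota(H)$, and I would split according to which clause of the definition of $\to_{\op}$ is responsible. If $H$ is operated from $G$ with respect to a non-root, non-$1$-level vertex $y$, inspecting the case analysis in the proof of Lemma \ref{operprop} shows that $\iota(G) = \iota(H)$ forces $r$ to remain the root of $H$. In the resulting $H$, the vertex $y$ becomes a new $1$-level vertex, while every old $1$-level vertex of $G$ survives (it lies in $\tilde{G}(y)$, outside the moved subtree $G(y)$). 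Hence $H$ has strictly more $1$-level vertices than $G$, and clause (ii) of $\le_{\ind}$ applies.

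In the remaining clause, $H$ is obtained by replacing a $1$-level subgraph $G(x)$ by $H'$ with $G(x) \to_{\op} H'$; let $y$ be the root of $H'$. When $x = y$, the root of $H$ is still $r$ and the partition of $X \setminus \{r\}$ into $1$-level subgraphs of $H$ matches that of $G$ at every block except the $x$-block, where $G(x)$ is replaced by $H'$. The induction hypothesis applied to $G(x) \to_{\op} H'$ on the smaller alphabet $X(x)$ yields $G(x) \le_{\ind} H'$, and clause (iii) of $\le_{\ind}$ delivers $G \le_{\ind} H$. When $x \neq y$, the requirement $\iota(G) = \iota(H)$ combined with the subcase analysis in the proof of Lemma \ref{operprop} (the case $x < y < r$ with both $e(x)$ and $e'$ blue, together with its mirror image $x > y > r$) pins down the color/order configuration so that $r$ is still the root of $H$. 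One then checks that the partitions of $X \setminus \{r\}$ into $1$-level subgraphs of $G$ and of $H$ coincide---with $G(x)$ paired to $H(y) = H'$ over the common vertex set $X(x)$, and every other $1$-level subgraph unchanged. The induction hypothesis supplies $G(x) \le_{\ind} H'$, so clause (iii) applies again.

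The main obstacle is this final $x \neq y$ case: one has to track precisely which vertex is the root of $H$ and verify that the natural partition-matching puts $G(x)$ opposite $H'$ over the same vertex set, so that clause (iii) of $\le_{\ind}$ is even applicable. The accounting is essentially a bookkeeping exercise driven by the same color/order case split already carried out for Lemma \ref{operprop}.
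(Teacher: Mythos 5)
Your proposal is correct and follows essentially the same route as the paper's proof: reduce to the one-step implication $G \to_{\op} H \Rightarrow G \le_{\ind} H$ via transitivity, induct on $n$, dispatch the strict case $\iota(G) <_{\rlex} \iota(H)$ with clause (i), count $1$-level vertices for the ``operated with respect to $y$'' case, and invoke the induction hypothesis plus clause (iii) for the $1$-level-subgraph replacement case. The extra bookkeeping you flag for $x \neq y$ is carried out (somewhat more tersely) in the paper's proof of Lemma \ref{operprop} and then cited, exactly as you propose.
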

\begin{proof}

Because $\le_{\op}$ is generated by the relation $\to_{\op},$ and $\le_{\ind}$ is a partial order and thus is transitive, it is sufficient to show that for any $G, H \in \overline{\cG}_n = \cR_n,$ $$G \to_{\op} H \mbox{ implies } G \le_{\ind} H.$$ We will show this by induction on $n,$ the size of the alphabet $X.$ 
When $n = 1,2,$ $\to_{\op}$ is not defined between any two distinct graphs. Therefore, our assertion that $G \to_{\op} H \mbox{ implies } G \le_{\ind} H$ is tautologically true.
Assuming $G \to_{\op} H \Rightarrow G \le_{\ind} H$ when $|X| < n,$ we consider when $|X|=n.$ 
Given $G \to_{\op} H,$ by Lemma \ref{operprop}, we have $\iota(G) \le_{\rlex} \iota(H).$ If $\iota(G) <_{\rlex} \iota(H),$ together with Lemma \ref{operprop}/(i), we already have $G \le_{\ind} H.$ Therefore, we can assume $\iota(G) = \iota(H).$ Suppose $r$ is the root of $G.$

Suppose $H$ is the graph operated from $G$ with respect to some non-root vertex $y.$ Let $e := e(y)$ be the edge connecting $y$ and its parent in $G.$ It is clear that $\iota(G) = \iota(H)$ only when $e$ is red and $r < y$, or $e$ is blue and $r>y,$ where the root of $H$ is still $r.$ As we mentioned in Remark \ref{non1level}, if $y$ is a $1$-level vertex of $G,$ then $G = H$ and we do not have that $G \to_{\op} H.$ Hence, we assume that $y$ is not a $1$-level vertex of $G.$ Then the number of $1$-level vertices in $G$ is one less than that in $H,$ so $G \le_{\ind} H.$  Therefore, we have $G \le_{\ind} H.$

Suppose there exists a $1$-level vertex $x$ of $G$ such that $H$ is obtained from $G$ by replacing the $1$-level subgraph $G(x)$ with $H',$ where $G(x) \to_{\op} H'.$ The size of the alphabet of $G(x)$ and $H'$ is smaller than $n,$ so by the induction hypothesis, we have that $G(x) \le_{\ind} H'.$ Hence, we have $\iota(G) = \iota(H)$, all but one $1$-level subgraphs of $G$ and $H$ are the same, and the different ones are $G(x) \le_{\ind} H'.$ We conclude that $G \le_{\ind} H.$

\end{proof}

\begin{proof}[Proof of Lemma \ref{wldfop}] 
It is clear that $\le_{\op}$ is reflexive and transitive, so it's left to show it is antisymmetric. If $G \le_{\op} H$ and $H \le_{\op} G$, by Lemma \ref{refine}, we have $G \le_{\ind} H$ and $H \le_{\ind} G.$ Since $\le_{\ind}$ is a well-defined partial order, so is antisymmetric, we have $G = H.$ Therefore, $\le_{\op}$ is antisymmetric as well.
\end{proof}

\begin{lem}\label{vanish3}
For any $G, H \in \overline{\cG}_n = \cR_n,$
we have the following:
\begin{ilist} 
\itm $\llangle o_G, b_H \rrangle = 0$ unless $G \le_{\op} H.$
\itm $\llangle o_G, b_H \rrangle = \pm 1$ if $G = H.$
\end{ilist}
\end{lem}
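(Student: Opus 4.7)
The plan is to prove (i) and (ii) simultaneously by induction on $n=|X|$, with the base case $n=1$ being trivial (both $G$ and $H$ consist of the single vertex $x_1$, there are no edges, and the empty bijection gives pairing $+1$). The key object in the inductive step is the root internal vertex $v_0$ of $b_H$ and the bipartition $(X_L, X_R)$ of $X$ it determines. By Definition \ref{defBn}, either $v_0$ is red and $b_H = [b_{H \setminus H_i}, b_{H_i}]$ with $X_L = V(H \setminus H_i) \ni r_H$ and $X_R = V(H_i)$ (Case A), or $v_0$ is blue and $b_H = \langle b_{H_k}, b_{H \setminus H_k}\rangle$ with $X_L = V(H_k)$ and $X_R = V(H \setminus H_k) \ni r_H$ (Case B). Let $H_L^*, H_R^*$ denote the two halves of $H$ induced by this split.

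Suppose $\llangle o_G, b_H \rrangle \neq 0$. Then $\beta_{o_G, b_H}$ is a color-preserving bijection, so exactly one edge $\tilde{e}$ of $G$ maps to $v_0$. This edge is the unique crossing of $(X_L, X_R)$ in $G$, and it has the color of $v_0$. All other edges of $G$ lie within $X_L$ or within $X_R$, so $G|_{X_L} \in \overline{\cG}_{|X_L|}$ and $G|_{X_R} \in \overline{\cG}_{|X_R|}$, and restriction of $\beta$ produces the multiplicative factorization
$$\llangle o_G, b_H \rrangle = \pm\,\llangle o_{G|_{X_L}}, b_{H_L^*} \rrangle \cdot \llangle o_{G|_{X_R}}, b_{H_R^*} \rrangle.$$
A short source-count using Lemma \ref{propogra}(iii), together with the uniqueness of $\tilde{e}$ as a crossing edge, pins down the roots: in Case A, $r_G \in X_L$ is the root of $G|_{X_L}$ while the root of $G|_{X_R}$ is the $X_R$-endpoint of $\tilde{e}$; Case B is symmetric. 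Part (ii) is then immediate: for $G=H$, the unique crossing edge of $H$ has the color of $v_0$, and each factor in the product equals $\pm 1$ by the induction hypothesis.

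For part (i), the nonvanishing of the factorization together with the induction hypothesis gives $G|_{X_L} \le_{\op} H_L^*$ and $G|_{X_R} \le_{\op} H_R^*$. I would then assemble a chain $G \le_{\op} H$ in three stages. First, apply at most one $\to_{\op}$ step by operating $G$ with respect to the $X_R$-endpoint of $\tilde{e}$ (unless that endpoint is already a $1$-level vertex), producing a graph $G_1$ in which the whole $X_R$-part appears as a $1$-level subgraph joined to the root by an edge of the color of $v_0$. Second, lift the sub-chain $G|_{X_R} \le_{\op} H_R^*$ atomic step by atomic step via clause (ii) of $\to_{\op}$, modifying this $1$-level subgraph repeatedly until it becomes $H_R^*$. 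Third, lift the sub-chain $G|_{X_L} \le_{\op} H_L^*$: each atomic move of the sub-chain is mirrored by a corresponding operation on the full graph with respect to a vertex in $X_L$, so that the $X_L$-part becomes $H_L^*$ while the already-correct $H_R^*$-side remains undisturbed, yielding $H$.

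The main obstacle is controlling the root throughout these operations, since a single $\to_{\op}$ step can change the root of the full graph (as when operating with respect to $y$ with $y < r$ along a red edge). One must invoke Lemma \ref{connectRT}(i) at each stage to check that the intermediate graphs remain in $\overline{\cG}_n$ and that the final root after Stage 3 is precisely $r_H$. Stage 3 is particularly delicate: the $X_L$-part contains the current root of the graph and is therefore never a $1$-level subgraph, so each atomic modification of the $X_L$-side must be realized through clause (i) operations on the full graph, with the preserved edge joining the two halves carefully tracking the possibly changing root of the $X_L$-side through each lifted move.
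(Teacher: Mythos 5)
Your overall strategy (split $b_H$ at its root vertex $v_0$, locate the unique crossing edge $\tilde e$ of $G$, factor the pairing, induct on the two halves, then reassemble the relation $G\le_{\op}H$ by a chain of $\to_{\op}$ moves) is essentially the strategy of the paper, which carries it out via Proposition \ref{vanish4} together with Lemma \ref{opind1} and Corollary \ref{opind2}. However, there is a genuine error in your reassembly: the claim that the source count ``pins down the roots'' so that $r_G$ lies in the side of the bipartition containing $r_H$ (i.e.\ $r_G\in X_L$ in Case A) is false. Take $n=3$, $H=\sG([x_2,\langle x_1,x_3\rangle])$ (root $x_2$, red edge $\{x_2,x_3\}$, blue edge $\{x_1,x_3\}$) and $G=\sG(\langle[x_1,x_2],x_3\rangle)$ (root $x_3$, blue edge $\{x_1,x_3\}$, red edge $\{x_1,x_2\}$). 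Here $\llangle o_G,b_H\rrangle=\pm1$, the bipartition is $X_L=\{x_2\}\ni r_H$, $X_R=\{x_1,x_3\}$, the crossing edge is the red edge $\tilde e=(x_1\to x_2)$ with tail $x_1\in X_R$, and $r_G=x_3\in X_R$. Consequently your Stage 1 breaks: the $X_R$-endpoint of $\tilde e$ is $x_1$, but $e(x_1)$ in $G$ is the blue edge $\{x_1,x_3\}$, not $\tilde e$, so ``operating with respect to the $X_R$-endpoint of $\tilde e$'' does not detach $G|_{X_R}$ as a $1$-level subgraph (indeed $G(x_1)=\{x_1,x_2\}\neq G|_{X_R}$). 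The correct single move here is to operate with respect to $x_2$, the \emph{head} of $\tilde e$, which lies in $X_L$.

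The fix is exactly what the paper does: split $G$ at the head $y$ of the crossing edge, wherever it lies, setting $G_1=\tilde G(y)\ni r_G$ and $G_2=G(y)$, and match these against the two halves of $H$ \emph{by alphabet} rather than by the left/right position in $b_H$ (the paper's ``without loss of generality'' merely relabels which subtree of the binary tree is called $T_1$, which is harmless since $\sG(T)$ and the target relation do not depend on that order). Then the one legitimate first move is the operation with respect to $y$, producing $(G_1,e'',G_2)$ with $e''$ joining the two roots, and the two induction hypotheses are lifted through this construction by Lemma \ref{opind1}; your Stages 2 and 3 are morally that lemma, and your worry about the root migrating during Stage 3 is precisely the case analysis in its proof. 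As written, though, your argument as stated does not cover configurations like the example above, so part (i) is not established. (A smaller remark: the paper actually proves the stronger Proposition \ref{vanish4} for arbitrary monomials $s$ with $H$ replaced by $\sG(s)$, and deduces this lemma via Lemma \ref{basesprop}; your part (ii) and the factorization of the pairing are fine.)
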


We will show a stronger result (Proposition \ref{vanish4}) than this lemma and prove this lemma as a corollary of that result in the next section.




Assuming Lemma \ref{vanish3}, we have the following proposition.

\begin{prop}\label{nonsing} Let $\le$ be a linear extension of $\le_{\op}$ on $\overline{\cG}_n = \cR_n.$ Then $\cM_n^\le$ is an upper triangular matrix with invertible entries on the diagonal, and thus is nonsingular.
\end{prop}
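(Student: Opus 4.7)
My plan is to show this essentially follows by combining Lemma \ref{vanish3} with the fact that $\le$ is a linear extension of $\le_{\op}$, with no additional combinatorial work required at this stage.

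First I would unpack what upper-triangularity means for $\cM_n^\le$: the $(i,j)$-entry is $\llangle o_{G_i}, b_{G_j}\rrangle$, so I need to verify that this entry vanishes whenever $i > j$, and that the diagonal entries $\llangle o_{G_i}, b_{G_i}\rrangle$ are units in $R$. The second point is immediate from Lemma \ref{vanish3}(ii), which gives $\llangle o_{G_i}, b_{G_i}\rrangle = \pm 1$, and $\pm 1$ is a unit in any commutative ring with unit.

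For the vanishing below the diagonal, I would argue by contrapositive using Lemma \ref{vanish3}(i). Suppose the entry $\llangle o_{G_i}, b_{G_j}\rrangle$ is nonzero. Then Lemma \ref{vanish3}(i) forces $G_i \le_{\op} G_j$. Since $\le$ is a linear extension of $\le_{\op}$ (so $G_i \le_{\op} G_j$ implies $G_i \le G_j$), we conclude $G_i \le G_j$, which by our indexing convention $G_1 < G_2 < \cdots < G_{n^{n-1}}$ means $i \le j$. The contrapositive is precisely the claim that $i > j$ forces the entry to be zero.

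Putting these together, $\cM_n^\le$ is upper triangular with each diagonal entry equal to $\pm 1$, hence its determinant is $\pm 1$, which is a unit in $R$. This makes $\cM_n^\le$ nonsingular, completing the proof. The only nontrivial ingredient here is Lemma \ref{vanish3} itself, whose proof (deferred to the next section via Proposition \ref{vanish4}) is the real technical heart of the argument; once that is in hand, Proposition \ref{nonsing} is essentially a bookkeeping consequence of the definitions of $\le_{\op}$ and linear extension.
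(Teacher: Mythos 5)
Your argument matches the paper's own proof: both deduce upper-triangularity from Lemma \ref{vanish3}(i) together with the fact that a linear extension of $\le_{\op}$ turns $G_i \le_{\op} G_j$ into $i \le j$, and both get invertible diagonal entries from Lemma \ref{vanish3}(ii). The proposal is correct and takes essentially the same route.
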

\begin{proof}
By Lemma \ref{vanish3}/(i) and the definition of linear extension, we have that $\llangle o_G, b_H \rrangle = 0$ unless $G \le H.$ Hence, $\cM_n^\le$ is an upper triangular matrix. Lemma \ref{vanish3}/(ii) implies that the diagonal entries of $\cM_n^\le$ are $\pm 1,$ thus are invertible.
\end{proof}

Theorems \ref{main1} and \ref{main3} immediately follow from Proposition \ref{nonsing}. We also have the following corollaries.

\begin{cor}
$\cB_n(X)$ is a basis for $\Lie_2(n)$.
\end{cor}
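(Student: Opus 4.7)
The plan is to deduce this corollary almost immediately from what has just been established. Proposition \ref{liespan} already tells us that $\cB_n(X)$ spans $\Lie_2(n),$ and by construction $|\cB_n(X)| = |\overline{\cG}_n| = n^{n-1}.$ What remains is linear independence, and Proposition \ref{nonsing} is tailor-made to supply this.

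First I would set up the pairing argument. Suppose that $\sum_{H \in \overline{\cG}_n} c_H \, b_H = 0$ in $\Lie_2(n),$ with coefficients $c_H \in R.$ Fix a linear extension $\le$ of $\le_{\op},$ and for each $G \in \overline{\cG}_n$ pair the relation with $o_G$ to obtain
\[
0 \;=\; \Bigllangle o_G, \sum_{H} c_H\, b_H \Bigrrangle \;=\; \sum_{H} c_H \,\llangle o_G, b_H \rrangle.
\]
As $G$ ranges over $\overline{\cG}_n,$ these equations can be packaged as $\cM_n^{\le} \cdot \mathbf{c} = 0,$ where $\mathbf{c}$ is the column vector of the $c_H.$ By Proposition \ref{nonsing}, $\cM_n^{\le}$ is upper triangular with $\pm 1$ on the diagonal, hence invertible over $R,$ which forces $\mathbf{c} = 0.$ Thus $\cB_n(X)$ is linearly independent, and combined with the spanning result this makes it a basis.

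I do not expect any serious obstacle here, since all the substantive work has been done: the spanning is Proposition \ref{liespan}, the counting is built into the construction of $\cB_n(X)$ from $\overline{\cG}_n = \cR_n,$ and the nonsingularity of the pairing matrix is Proposition \ref{nonsing}. The only small point to be careful about is to use the pairing on $\Lie_2(n) \times \Eil_2(n)$ induced from $\Theta_n \times \Gamma_n$ (Propositions \ref{vanish1} and \ref{vanish2}) rather than just on representatives, so that the input relation $\sum_H c_H b_H = 0$ in $\Lie_2(n)$ is actually being paired with a well-defined class $o_G \in \Eil_2(n).$ Once that is noted, the corollary follows in one short paragraph.
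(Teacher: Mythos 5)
Your proof is correct and takes essentially the same route as the paper: spanning from Proposition \ref{liespan}, and linear independence extracted from the nonsingularity of the pairing matrix $\cM_n^\le$ in Proposition \ref{nonsing}, which is invertible over $R$ because it is upper triangular with unit ($\pm 1$) diagonal entries. The paper states the corollary as an immediate consequence of Proposition \ref{nonsing} without writing out the matrix equation $\cM_n^\le \cdot \mathbf{c} = 0$, but your unpacking (including the remark that one must pair with the induced pairing on $\Lie_2(n) \times \Eil_2(n)$) is exactly the intended argument.
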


\begin{cor}\label{eilrank}
$\Eil_2(n)$ is free of rank $n^{n-1},$ and $\cO_n(X)$ is a  basis for $\Eil_2(n).$
\end{cor}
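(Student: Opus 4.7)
The plan is to piggy-back on the machinery already assembled for the proof of Theorem \ref{main3}: spanning of $\cO_n(X)$ is supplied by Proposition \ref{eilspan}, and linear independence will come essentially for free from the nonsingularity of the pairing matrix $\cM_n^\le$.

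Concretely, I would fix a linear extension $\le$ of the partial order $\le_{\op}$ on $\overline{\cG}_n = \cR_n$, and suppose we have a relation
\[
\sum_{G \in \overline{\cG}_n} \gamma_G\, o_G = 0 \quad \text{in } \Eil_2(n)
\]
with coefficients $\gamma_G \in R$. Pairing both sides against each basis element $b_H \in \cB_n(X)$ of $\Lie_2(n)$ (which is a basis by the previous corollary) gives, for every $H \in \overline{\cG}_n$,
\[
\sum_{G \in \overline{\cG}_n} \gamma_G\, \llangle o_G, b_H \rrangle = 0.
\]
Writing $\gamma = (\gamma_G)_{G \in \overline{\cG}_n}$ as a column vector of length $n^{n-1}$, this is the linear system $\cM_n^\le \cdot \gamma = 0$.

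By Proposition \ref{nonsing}, $\cM_n^\le$ is upper triangular with diagonal entries $\pm 1$, hence invertible over the arbitrary base ring $R$. Therefore $\gamma = 0$, so $\cO_n(X)$ is $R$-linearly independent. Combined with Proposition \ref{eilspan}, this shows $\cO_n(X)$ is a basis for $\Eil_2(n)$; since $|\cO_n(X)| = |\overline{\cG}_n| = n^{n-1}$, the module $\Eil_2(n)$ is free of rank $n^{n-1}$.

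There is essentially no obstacle here: all the genuine work was done in proving Proposition \ref{nonsing}, and the only thing to be careful about is that perfectness of the pairing is being used in a very specific direction (the pairing of a putative nontrivial relation in $\Eil_2(n)$ with the known basis $\cB_n(X)$ of $\Lie_2(n)$), which is exactly the content of $\cM_n^\le$ being invertible --- no appeal to Theorem \ref{main3} in its full strength is actually needed.
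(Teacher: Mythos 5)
Your argument is correct and is exactly the route the paper intends: the paper derives this corollary as an immediate consequence of Proposition \ref{nonsing}, i.e.\ from the nonsingularity of the pairing matrix between the spanning set $\cO_n(X)$ and the spanning set $\cB_n(X)$, which is precisely the linear algebra you spell out. (Strictly speaking your relation yields $(\cM_n^\le)^T\gamma=0$ rather than $\cM_n^\le\gamma=0$ given the paper's indexing, but this changes nothing since the transpose is equally invertible.)
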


\section{More bases for $\Lie_2(n)$}
In this section, we will show that we can obtain more bases for $\Lie_2(n)$ from $\overline{\cG}_n = \cR_n.$ We will discuss a property of $\cB_n(X)$ and show that having this particular property is enough to guarantee that a subset of $M_n$ (the set of all monomials in $\Lie_2(n)$) is a basis for $\Lie_2(n).$

The map $G \to b_G$ (defined in Definition \ref{defBn}) gives a map from $\overline{\cG}_n = \cR_n$ to $M_n.$ As we discussed earlier, we can consider $M_n$ and $\BT_n$ to be the same sets. We define a natural inverse map from $M_n = \BT_n$ to $\overline{\cG}_n = \cR_n$ as follows. Recall we defined the graphical root $\gr(m)$ of a monomial $m$ in Definition \ref{root}.

\begin{defn}\label{mapG1}
For any monomial $m \in M_n,$ we define {\it the two-colored graph corresponding to $m$}, denoted by $\sG(m)$, recursively:
\begin{ilist}
\itm If $m = x$ a single variable, let $\sG(m) := x;$
\itm If $m = \{m_1, m_2\},$ let $\sG(m) := (\sG(m_1), e, \sG(m_2)),$ where $e$ is an edge connecting $\gr(m_1)$ and $\gr(m_2)$ with color red if $\{ \cdot , \cdot \} = \brkone$ or blue if $\{ \cdot , \cdot \} = \brktwo.$
\end{ilist}
\end{defn}

\begin{ex}
Let $G$ be the second two-colored graph shown in the first row of Figure \ref{exbn}. When $m = [[x_1, x_3], x_2], [[x_3, x_1], x_2],$ or $[[x_1, x_2], x_3],$ we have $\sG(m) = G.$ 
\end{ex}

We have the following lemma about $\sG(m),$ which can be shown by induction on $n$ and by using Lemma \ref{connectRT}/(i) recursively. We omit the details of the proof.
\begin{lem}\label{sG}
For any monomial $m \in M_n = \BT_n,$ we have that $\sG(m)$ is in $\overline{\cG}_n = \cR_n.$ In particular, the root of $\sG(m)$ is exactly the graphical root $\gr(m)$ of $m.$
\end{lem}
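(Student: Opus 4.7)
The plan is to prove both assertions simultaneously by induction on $n = |X|$, using Lemma \ref{connectRT}/(i) for the inductive step. The base case is immediate: when $n = 1$, we have $m = x$ and $\sG(m) = x$, a single-vertex rooted tree, which lies in $\overline{\cG}_1 = \cR_1$ with root $x = \gr(m)$.

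For the inductive step, assume the statement holds whenever the supporting alphabet has size less than $n$, and suppose $m \in M_n$ with $n \ge 2$. Then $m = \{m_1, m_2\}$ with $\{\cdot,\cdot\} = \brkone$ or $\brktwo$, and the supporting alphabets $X_1, X_2$ of $m_1, m_2$ form a disjoint partition of $X$ with $|X_i| < n$. By the inductive hypothesis, $\sG(m_i) \in \overline{\cG}_{|X_i|} = \cR_{|X_i|}$ and its root is $\gr(m_i)$, for $i = 1, 2$. Since $\gr(m_1)$ and $\gr(m_2)$ are the roots of $\sG(m_1)$ and $\sG(m_2)$ respectively, the hypotheses of Lemma \ref{connectRT}/(i) are met, so $\sG(m) = (\sG(m_1), e, \sG(m_2))$ lies in $\overline{\cG}_n = \cR_n$.

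It remains to identify the root. By Lemma \ref{connectRT}/(i), the root of $\sG(m)$ is $\min(\gr(m_1), \gr(m_2))$ if $e$ is red and $\max(\gr(m_1), \gr(m_2))$ if $e$ is blue. By the construction in Definition \ref{mapG1}, $e$ is red precisely when $\{\cdot,\cdot\} = \brkone$ and blue precisely when $\{\cdot,\cdot\} = \brktwo$. Comparing with Definition \ref{root}, we see that in the red case the root equals $\min\{\gr(m_1), \gr(m_2)\} = \gr([m_1, m_2]) = \gr(m)$, and in the blue case it equals $\max\{\gr(m_1), \gr(m_2)\} = \gr(\langle m_1, m_2 \rangle) = \gr(m)$. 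This completes the induction.

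There is no real obstacle here: the whole argument is a bookkeeping exercise matching the recursive clauses of $\sG$ and $\gr$ against the two cases of Lemma \ref{connectRT}/(i). The only thing worth noting is that we must feed both halves of the inductive hypothesis into Lemma \ref{connectRT}/(i) at the same time, since the lemma requires $\gr(m_i)$ to be the root of $\sG(m_i)$ before we can even apply it to conclude that $\sG(m) \in \cR_n$.
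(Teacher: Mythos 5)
Your proof is correct and follows exactly the route the paper indicates: the paper omits the details but states that the lemma "can be shown by induction on $n$ and by using Lemma \ref{connectRT}/(i) recursively," which is precisely your argument. The bookkeeping matching the red/blue cases of Lemma \ref{connectRT}/(i) against the $\min$/$\max$ clauses of Definition \ref{root} is handled correctly.
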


Hence, the map $\sG: m \to \sG(m)$ gives a map from $M_n = \BT_n$ to $\overline{\cG}_n = \cR_n.$ If we restrict the map $\sG$ to the set $\cB_n(X) \subset M_n,$ it is clear that $\sG$ and $G \to b_G$ are inverse to one another. Hence, we have the following lemma.

\begin{lem}\label{basesprop}
The map $\sG$ induces a bijection between $\cB_n(X)$ and $\overline{\cG}_n.$
\end{lem}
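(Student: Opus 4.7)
The plan is to verify directly that $\sG(b_G) = G$ for every $G \in \overline{\cG}_n$, by induction on $n$, using the recursive definitions of $b_G$ and $\sG$. Since the paper has already noted that different trees $G$ give different monomials $b_G$ (so $G \mapsto b_G$ is injective onto $\cB_n(X)$), and since $|\cB_n(X)| = |\overline{\cG}_n| = n^{n-1}$, proving $\sG(b_G) = G$ would show $\sG|_{\cB_n(X)}$ is a two-sided inverse of $G \mapsto b_G$, hence a bijection.

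For the base case $n = 1$, both maps are the identity on the single variable $x$. For the inductive step, fix $G \in \overline{\cG}_n$ with root $r$, children $c_1 < \cdots < c_k$ of $r$, and subtrees $G_1, \ldots, G_k$. I would split according to the two cases of Definition \ref{defBn}: if $r < c_k$, pick the smallest $c_i$ with $\{r,c_i\}$ red and use $b_G = [b_{G \setminus G_i}, b_{G_i}]$; otherwise all edges at $r$ are blue and $b_G = \langle b_{G_k}, b_{G \setminus G_k} \rangle$. Applying $\sG$ to either outer bracket per Definition \ref{mapG1} produces a graph obtained from $\sG(b_{G\setminus G_i})$ and $\sG(b_{G_i})$ (or $\sG(b_{G_k})$, $\sG(b_{G\setminus G_k})$) by joining their graphical roots with an edge of the appropriate color (red for $[\cdot,\cdot]$, blue for $\langle \cdot,\cdot\rangle$).

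Two ingredients close the step. First, the inductive hypothesis, applied to $G \setminus G_i$ and $G_i$ (each of size $< n$ and each in the corresponding $\overline{\cG}_{|\cdot|}$, since any subtree of a rooted tree in $\overline{\cG}_n$ is again in $\overline{\cG}$), gives $\sG(b_{G\setminus G_i}) = G\setminus G_i$ and $\sG(b_{G_i}) = G_i$. Second, Lemma \ref{sG} tells us that the graphical roots $\gr(b_{G\setminus G_i})$ and $\gr(b_{G_i})$ are precisely the roots of $G\setminus G_i$ and $G_i$, namely $r$ and $c_i$. Thus the edge inserted by $\sG$ is the edge $\{r, c_i\}$ with the correct color (red, since $r < c_i$ by the choice of $c_i$), and the reassembled graph is exactly $G$. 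The blue case is identical, with $\{r, c_k\}$ blue since $r > c_k$.

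There is no real obstacle here: the recursions of Definitions \ref{defBn} and \ref{mapG1} are mirror images of each other, with the color/bracket dictionary (red $\leftrightarrow \brkone$, blue $\leftrightarrow \brktwo$) matching by construction, and with Lemma \ref{sG} guaranteeing that graphical roots and tree roots track together. The only mild care needed is checking that in Case 1 the edge chosen by $\sG$ is indeed the \emph{same} red edge $\{r, c_i\}$ selected in Definition \ref{defBn} — this follows because $\sG$ always connects at the graphical root, which is $r$ on the $G \setminus G_i$ side and $c_i$ on the $G_i$ side, and because $c_i$ was defined as the smallest red neighbor of $r$, which is precisely how the decomposition $b_G = [b_{G \setminus G_i}, b_{G_i}]$ was set up.
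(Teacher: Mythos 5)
Your proof is correct and follows essentially the same route as the paper, which simply asserts that $\sG$ and $G \mapsto b_G$ are mutually inverse on $\cB_n(X)$ and $\overline{\cG}_n$; you have supplied the inductive verification of $\sG(b_G)=G$ that the paper leaves to the reader, correctly matching the mirror-image recursions of Definitions \ref{defBn} and \ref{mapG1} via the graphical-root bookkeeping of Lemma \ref{sG}. (Note that once $\sG(b_G)=G$ is established for all $G$, the bijection follows immediately without the separate cardinality and injectivity inputs, though invoking them does no harm.)
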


\begin{ex}
In Figure \ref{exbn}, $\sG$ maps each monomial to the two-colored rooted tree shown above it. This demonstrates the bijection between $\cB_3(X)$ and $\overline{\cG}_3$ given by $\sG$.
\end{ex}

It turns out the property described in Lemma \ref{basesprop} is a sufficient condition for a subset of $M_n = \BT_n$ to be a basis for $\Lie_2(n).$

\begin{thm}\label{main4}
For any subset $S$ of $M_n = \BT_n$, if the map $\sG$ induces a bijection between $S$ and $\overline{\cG}_n,$ then $S$ is a basis for $\Lie_2(n).$

\end{thm}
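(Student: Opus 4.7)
The plan is to mimic the argument that gave Theorem \ref{main1} via the pairing $\llangle\,,\,\rrangle$, but applied to the set $S$ in place of $\cB_n(X)$. The crux is that the key pairing estimate Lemma~\ref{vanish3} was indexed by $b_H$ because $\sG(b_H)=H$, but (as the excerpt promises) the stronger statement proved in Section 7 is that for \emph{every} monomial $m\in M_n$ one has $\llangle o_G, m \rrangle = 0$ unless $G \le_{\op} \sG(m)$, and $\llangle o_{\sG(m)}, m\rrangle = \pm 1$. I will invoke Proposition \ref{vanish4} in this form.

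Write $S=\{s_G : G\in\overline{\cG}_n\}$ using the hypothesized bijection $\sG\colon S\risom \overline{\cG}_n$, so that $\sG(s_G)=G$. Let $\le$ be any linear extension of the partial order $\le_{\op}$ on $\overline{\cG}_n$ (which exists by Lemma~\ref{wldfop}), and order the rows and columns of the $n^{n-1}\times n^{n-1}$ matrix
$$\cN_n^{\le}:=\bigl(\llangle o_G,\, s_H\rrangle\bigr)_{G,H\in\overline{\cG}_n}$$
accordingly. By Proposition~\ref{vanish4}, the $(G,H)$-entry vanishes unless $G\le_{\op}\sG(s_H)=H$, and the diagonal entries are $\pm1$. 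Hence $\cN_n^{\le}$ is upper triangular with invertible diagonal, so it is invertible over $R$. This step is the direct analogue of Proposition~\ref{nonsing}, with $S$ replacing $\cB_n(X)$.

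To conclude, I use that $\cB_n(X)$ is already known to be a basis for $\Lie_2(n)$ (Corollary after Proposition~\ref{nonsing}) and $\cO_n(X)$ is a basis for $\Eil_2(n)$ (Corollary~\ref{eilrank}). Expand each $s_H\in S$ in the basis $\cB_n(X)$, say $s_H=\sum_{G} c_{H,G}\, b_G$. Then
$$\cN_n^{\le} \;=\; C\cdot \cM_n^{\le},$$
where $C=(c_{H,G})$ is the transition matrix from $\cB_n(X)$ to $S$, and $\cM_n^{\le}$ is the pairing matrix of Proposition~\ref{nonsing}. Both $\cN_n^{\le}$ and $\cM_n^{\le}$ have unit determinant (up to sign), so $C\in GL_{n^{n-1}}(R)$. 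Therefore $S$ is also a basis for $\Lie_2(n)$.

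The only nontrivial input is Proposition~\ref{vanish4} from Section~7; every other step is essentially a bookkeeping translation of the argument in Section~7 from the particular basis $\cB_n(X)$ to an arbitrary $S$ satisfying the bijection hypothesis. If one preferred to avoid quoting the transition matrix, one could alternatively invoke the perfectness of the pairing (Theorem~\ref{main3}) together with invertibility of $\cN_n^{\le}$ to deduce directly that $S$ is $R$-linearly independent, and then use $|S|=n^{n-1}=\mathrm{rank}\,\Lie_2(n)$ to upgrade to a basis. The main obstacle is therefore purely verifying that the formulation and proof of Proposition~\ref{vanish4} applies to arbitrary $m\in M_n$, not just to $m=b_H$.
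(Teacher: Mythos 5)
Your proposal is correct and follows essentially the same route as the paper: index $S$ by $\overline{\cG}_n$ via $\sG$, invoke Proposition \ref{vanish4} to see that the pairing matrix between $S$ and $\cO_n(X)$ (ordered by a linear extension of $\le_{\op}$) is upper triangular with $\pm 1$ diagonal, and conclude nonsingularity exactly as in Proposition \ref{nonsing}. Your transition-matrix step (which, up to a harmless transpose, should read $\cN_n^{\le}=\cM_n^{\le}\cdot C^{T}$) just makes explicit the paper's closing observation that a nonsingular pairing against $\cO_n(X)$ forces an $(n^{n-1})$-subset to be a basis.
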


\begin{rem}
For each $G \in \overline{\cG}_n,$ we let $\sG^{-1}(G)$ be the set of monomials $m \in M_n$ satisfying $\sG(m) = G.$ The condition that the map $\sG$ induces a bijection between $S$ and $\overline{\cG}_n$ is equivalent to having that $|S \cap \sG^{-1}(G)| = 1$ for each $G \in \overline{\cG}_n.$ 
\end{rem}

\begin{ex}$\cB_3(X),$ the set of the 9 monomials shown in Figure \ref{exbn}, is a basis for $\Lie_2(3).$ 

Let $G$ be the second two-colored rooted tree shown in the first row of Figure \ref{exbn}. Both $b_G = [[x_1, x_3], x_2]$ and $m = [[x_1, x_2], x_3]$ are in $\sG^{-1}(G).$ Therefore, if we let $S$ be the set obtained from $\cB_3(X)$ by replacing $b_G$ by $m,$  then by Theorem \ref{main4}, $S$ is a basis for $\Lie_2(3)$ as well.
\end{ex}

It will be shown later that Theorem \ref{main4} and Lemma \ref{vanish3} are corollaries of the following key result of this section.

\begin{prop}\label{vanish4}
For any $s \in M_n = \BT_n$ and any $G \in \overline{\cG}_n = \cR_n,$ we have the following:
\begin{ilist}
\itm $\llangle o_G, s \rrangle = 0$ unless $G \le_{\op} \sG(s).$
\itm $\llangle o_G, s \rrangle = \pm 1$ if $G = \sG(s).$
\end{ilist}
\end{prop}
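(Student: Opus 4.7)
I would prove Proposition~\ref{vanish4} by induction on $n=|X|$, establishing (i) and (ii) simultaneously. The base case $n=1$ is immediate. For the inductive step, write $s = \{s_1,s_2\}$ with outer bracket either $\brkone$ or $\brktwo$, let $X_i$ be the leaf set of $s_i$, and let $v$ denote the root of $s$ (the internal vertex corresponding to the outer bracket).

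For (ii), when $G = \sG(s) = (\sG(s_1), e', \sG(s_2))$ with $e'$ connecting $\gr(s_1)$ and $\gr(s_2)$ in the color dictated by the outer bracket, $\beta_{o_G,s}$ decomposes naturally: by the inductive hypothesis applied to $s_i$, the edges of $\sG(s_i)$ biject color-preservingly with the internal vertices of $s_i$, and the nadirs computed inside $s_i$ remain nadirs in the larger tree $s$. Meanwhile the endpoints of the new edge $e'$ lie in $X_1$ and $X_2$ respectively, so the path in $s$ between them has nadir $v$, with colors matching by construction. Hence $\beta_{o_{\sG(s)},s}$ is a color-preserving bijection and $\llangle o_{\sG(s)}, s \rrangle = \tau_{o_{\sG(s)},s} = \pm 1$.

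For (i), suppose $\llangle o_G, s \rrangle \neq 0$, so $\beta_{o_G,s}$ is a color-preserving bijection. Let $e$ be the unique edge of $G$ mapped to $v$; since a simple path in $s$ has nadir $v$ iff its endpoints lie on opposite sides of $v$, $e$ must connect some $a \in X_1$ to some $b \in X_2$. Deleting $e$ splits $G$ into trees $G_1 \in \overline{\cG}_{|X_1|}$ and $G_2 \in \overline{\cG}_{|X_2|}$, which inherit pattern-avoidance from $G$. The restriction of $\beta_{o_G,s}$ to the edges of $G_i$ is a color-preserving bijection with the internal vertices of $s_i$ (nadirs in $s_i$ agree with nadirs in $s$ for paths that stay inside $s_i$), so $\llangle o_{G_i}, s_i \rrangle \neq 0$, and the inductive hypothesis yields $G_i \le_{\op} \sG(s_i)$.

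The main step---and the main obstacle---is promoting these two inequalities to $G = (G_1, e, G_2) \le_{\op} (\sG(s_1), e', \sG(s_2)) = \sG(s)$. My plan is to work in two stages, assuming without loss of generality that the root of $G$ lies in $X_1$. In stage one, if $a$ is not the root of $G_1$, a single $\to_{\op}$-step ``operated with respect to $b$'' (legitimate because $b$ is then a non-root, non-$1$-level vertex) re-attaches $e$ at the root of $G_1$, making $G_2$ a $1$-level subgraph of the updated tree; the chain $G_2 \to_{\op} \cdots \to_{\op} \sG(s_2)$ provided by induction then lifts step-by-step via the recursive ``$1$-level subgraph'' clause of $\to_{\op}$, terminating at $(G_1, e'', \sG(s_2))$ with $e''$ joining the root of $G_1$ to $\gr(s_2)$ in the correct color. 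Stage two performs the analogous transformation on the $X_1$-side, replacing $G_1$ by $\sG(s_1)$. The delicate point is that $G_1$ sits on the root side of the ambient tree and is not itself a $1$-level subgraph, so the recursive clause cannot be invoked on $G_1$ directly; instead, each step $K_j \to_{\op} K_{j+1}$ in the chain $G_1 \le_{\op} \sG(s_1)$ must be lifted individually---a ``non-root-vertex'' operation at $y \in G_1$ lifts to the same operation performed in the ambient tree (with $\sG(s_2)$ remaining part of the fixed portion), and a recursive step on a $1$-level subgraph of $G_1$ lifts to a recursive step on the same subgraph viewed inside the ambient tree. The bookkeeping of which vertex is the current root after each operation is controlled by Lemma~\ref{operprop}, and checking that the final ambient graph is exactly $\sG(s)$ (with the correct outer root) yields the $\to_{\op}$-chain witnessing $G \le_{\op} \sG(s)$, completing the induction.
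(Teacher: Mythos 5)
Your overall strategy coincides with the paper's: induct on $n$, split $G$ at the unique edge $e$ whose path in $s$ has nadir at the root, apply the inductive hypothesis to the two pieces, and then promote $G_1 \le_{\op} \sG(s_1)$ and $G_2 \le_{\op} \sG(s_2)$ to $G \le_{\op} \sG(s)$. The paper isolates this promotion step as Lemma \ref{opind1} and Corollary \ref{opind2}, proved by reducing to a single step $G_1 \to_{\op} H_1$ and doing a case analysis on where the root of the re-glued ambient tree sits; your two-stage plan is a re-derivation of exactly that, and part (ii) is argued identically.

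The one place your sketch would fail as written is stage one. After re-attaching $e$ at the root of $G_1$, you assert that $G_2$ is a $1$-level subgraph of the ambient tree and that the entire chain $G_2 \to_{\op} \cdots \to_{\op} \sG(s_2)$ then lifts via the recursive $1$-level-subgraph clause. But a $\to_{\op}$ step can change the root of the $X_2$-part (see Lemma \ref{operprop} and Figure \ref{exop}), and by Lemma \ref{connectRT}/(i) the root of the ambient tree is recomputed from the roots of the two pieces and the color of the connecting edge; so partway through the chain the ambient root can migrate into the $X_2$-side, at which point the $X_2$-part is no longer a $1$-level subgraph and the recursive clause does not apply to the next step. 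The cure is precisely the per-step case analysis you already describe for stage two (and which the paper carries out in the proof of Lemma \ref{opind1}): lift one $\to_{\op}$ step at a time, in each step either invoking the recursive clause when the modified piece is a $1$-level subgraph, or, when the modified piece contains the ambient root, performing the same elementary operation inside the ambient tree and then re-attaching the connecting edge to the new roots by one further ``operated with respect to'' move; transitivity of $\le_{\op}$ then finishes. With that correction both stages go through and your argument matches the paper's.
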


Because the pairing $\llangle \ , \ \rrangle$ is defined between $\OG_n$ and $\BT_n,$ when we show Proposition \ref{vanish4}, it is more convenient if we consider $s$ to be an element in $\BT_n.$ Therefore, for easy reference, we rewrite Definition \ref{mapG1} in terms of $\BT_n$. 

\begin{defn}\label{mapG2}
For any 2v-colored binary tree $T \in \BT_n$ we define {\it the two-colored graph corresponding to $T$}, denoted by $\sG(T)$, recursively:
\begin{ilist}
\itm If $T = x$ has only one vertex, let $\sG(T) := x;$
\itm If $T_1$ and $T_2$ are the left subtree and right subtree of $T,$  let $\sG(T) := (\sG(T_1), e, \sG(T_2)),$ where $e$ is an edge connecting 
the roots of $\sG(T_1)$ and $\sG(T_2)$
with the same color as the root of $T.$ 
\end{ilist}
\end{defn}

\begin{rem}
It is easy to verify that Definition \ref{mapG2} is equivalent to Definition \ref{mapG1} when we consider $M_n = \BT_n.$ Therefore, we still have $\sG(T) \in \overline{\cG}_n = \cR_n.$ Also, by Lemma \ref{sG}, we are able to use ``roots of $\sG(T_1)$ and $\sG(T_2)$'' instead of ``$\gr(T_1)$ and $\gr(T_2)$'' in the description of the definition.

\end{rem}

We need the following lemma and its corollary to prove Proposition \ref{vanish4}.

\begin{lem}\label{opind1}
Let $X_1 \cup X_2$ be a disjoint partition of $X.$ Suppose $G_i$ and $H_i$ are in $\overline{\cG}_{|X_i|} = \cR_{|X_i|}$ with roots $x_i$ and $y_i$ in $X_i,$ for $i=1,2.$ Let $e = \{x_1, x_2\}$ be an edge connecting the roots of $G_1$ and $G_2$ of color $\kappa,$ where $\kappa$ is blue or red. Let $G := (G_1, e, G_2).$ Then we have the following:
\begin{ilist}
\itm If $G_1 \le_{\op} H_1,$ then $G \le_{\op} (H_1, e', G_2),$ where $e' = \{y_1, x_2\}$ is an edge connecting the roots of $H_1$ and $G_2$ of color $\kappa.$
\itm If $G_2 \le_{\op} H_2,$ then $G \le_{\op} (G_1, e', H_2),$ where $e' = \{x_1, y_2\}$ is an edge connecting the roots of $G_1$ and $H_2$ of color $\kappa.$
\itm If $G_1 \le_{\op} H_1$ and $G_2 \le_{\op} H_2,$ then $G \le_{\op} (H_1, e', H_2),$ where $e' = \{y_1, y_2\}$ is an edge connecting the roots of $H_1$ and $H_2$ of color $\kappa.$
\end{ilist}
\end{lem}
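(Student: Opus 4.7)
The plan is to deduce (iii) from (i) and (ii) by transitivity: part (i) gives $(G_1,e,G_2) \le_{\op} (H_1, \{y_1,x_2\}_\kappa, G_2)$, and then (ii), applied with $H_1$ now playing the role of ``$G_1$'', yields $(H_1, \{y_1,x_2\}_\kappa, G_2) \le_{\op} (H_1, \{y_1,y_2\}_\kappa, H_2)$. Part (ii) is entirely symmetric to (i) (swap the two alphabets throughout), so the main task is to prove (i). Since $\le_{\op}$ is generated by $\to_{\op}$, a short induction on the length $k$ of a chain $G_1 = G_1^{(0)} \to_{\op} \cdots \to_{\op} G_1^{(k)} = H_1$ reduces (i) to the single-step claim: whenever $G_1 \to_{\op} G_1'$ with root $r'$, one has $(G_1,e,G_2) \le_{\op} (G_1', e^*, G_2)$ where $e^* = \{r',x_2\}_\kappa$.

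To prove this single-step claim I would split on whether the root of $G = (G_1,e,G_2)$ is $x_1$ or $x_2$; by Lemma \ref{connectRT}/(i) it must be one of these. If the root of $G$ equals $x_2$, then $G_1$ is itself the 1-level subgraph of $G$ at $x_1$, so Clause 2 in the definition of $\to_{\op}$ applies to $G$ at this 1-level vertex with the replacement $G_1 \to_{\op} G_1'$, and the resulting graph is exactly $(G_1',e^*,G_2)$. When the root of $G$ equals $x_1$, I split further on which clause produced $G_1'$ from $G_1$: under Clause 1 (operating at a non-root, non-1-level vertex $y$), the vertex $y$ has the same parent in $G$ as in $G_1$, so it is also non-1-level in $G$, and the analogous operation on $G$ at $y$ yields $G \to_{\op} G^*$; under Clause 2 (with 1-level vertex $x$ of $G_1$ and sub-replacement $G_1(x) \to_{\op} H'$), the vertex $x$ is also 1-level in $G$, and performing the same sub-replacement inside $G$ yields $G \to_{\op} G^\#$. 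In both sub-cases the intermediate graph has edge set exactly $\mathrm{edges}(G_1') \cup \mathrm{edges}(G_2) \cup \{e\}$, and its root, as computed via Lemma \ref{connectRT}/(i), equals the root $r'$ of $G_1'$.

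If $r' = x_1$ then $e^* = e$ and the intermediate graph is already $(G_1',e^*,G_2)$, so a single application of $\to_{\op}$ suffices. Otherwise $r' \ne x_1$: the intermediate graph has root $r'$ but still carries $e = \{x_1,x_2\}_\kappa$ in place of the desired $e^* = \{r',x_2\}_\kappa$, and inside it $x_2$ has parent $x_1 \ne r'$, so $x_2$ is non-1-level. A second application of $\to_{\op}$, operating at $x_2$, then swaps $e$ for $e^*$ and lands precisely at $(G_1',e^*,G_2)$, completing the proof of the single-step claim. The main obstacle is the book-keeping: in each of the sub-cases one must verify that the operation on $G$ produces the intended edge set, that the root formula from Lemma \ref{connectRT}/(i) for the intermediate graph really agrees with the root of $G_1'$, and that $x_2$ is non-1-level at the point where we need to operate there, so that each invoked $\to_{\op}$ is nontrivial in the sense of Remark \ref{non1level}.
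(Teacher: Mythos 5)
Your proposal is correct and follows essentially the same route as the paper: reduce (iii) to (i) and (ii), get (ii) from (i) by symmetry, reduce (i) to a single $\to_{\op}$ step, split on whether the root of $G$ is $x_2$ (where Clause 2 applies directly) or $x_1$ (where one passes through the intermediate graph $(G_1',e,G_2)$ and then operates at $x_2$ to replace $e$ by $e^*$). The only difference is that you spell out the sub-case analysis behind the paper's ``one checks that $G_1 \to_{\op} H_1$ implies $G \to_{\op} H'$,'' which is harmless extra detail.
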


\begin{proof}
(iii) follows from (i) and (ii). Also, (i) and (ii) are symmetric. Hence, it is enough to show (i). Because $\le_{\op}$ is generated by $\to_{\op}$ and is transitive, it is sufficient to show (i) when we assume $G_1 \to_{\op} H_1.$ Let $H :=  (H_1, e', G_2).$ We discuss the two possibilities for the root of $G.$ 
\begin{itemize}
\item If the root of $G$ is $x_2,$ the root of $G_2,$ then the root of $G_1$ is a $1$-level vertex in $G.$ Thus, $G_1 \to_{\op} H_1$ implies that $G \to_{\op} H.$ So $G \le_{\op} H.$ 
\item If the root of $G$ is $x_1,$ the root of $G_1,$ then by Lemma \ref{connectRT}/(i), we have that $x_1 < x_2$ and $e$ is red, or $x_1 > x_2$ and $e$ is blue. 
Let $H' = (H_1, e, G_2).$  Note that the root of $H_1$ is not necessarily to be $x_1.$ If the root of $H_1$ is $x_1,$ then $H' \in \overline{\cG}_n$ by Lemma \ref{connectRT}/(i); otherwise, we still have $H' \in \overline{\cG}_n$ according to Lemma \ref{connectRT}/(ii). One checks that $G_1 \to_{\op} H_1$ implies that $G \to_{\op} H'.$ However, $H$ is operated from $H'$ with respect to $x_2.$ Therefore, $G \le_{\op} H.$
\end{itemize}

\end{proof}

\begin{cor}\label{opind2}
For any $G \in \overline{\cG}_n = \cR_n,$ let $y$ be a non-root vertex of $G$ and $e=e(y).$ Suppose the color of $e$ is $\kappa,$ where $\kappa$ is blue or red. Let $G_1 = \tilde{G}(y)$ and $G_2=G(y)$  be the graphs obtained from $G$ by removing the edge $e.$ Let $H_i$ be a two-colored rooted tree on the same alphabet as $G_i,$ for $i=1,2.$ Let $H = (H_1, e', H_2),$ where $e'$ is an edge connecting the roots of $H_1$ and $H_2$ of color $\kappa.$

If $G_1 \le_{\op} H_1$ and $G_2 \le_{\op} H_2,$ then $G \le_{\op} H.$
\end{cor}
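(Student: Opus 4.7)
The plan is to reduce Corollary \ref{opind2} to Lemma \ref{opind1}(iii) by first performing a single ``operate" step that brings the edge $e$ to join the roots of $G_1$ and $G_2$, and then invoking Lemma \ref{opind1}(iii) to propagate the inequalities $G_1 \le_{\op} H_1$ and $G_2 \le_{\op} H_2$ across the joined graphs. The mismatch between the two setups is precisely that in $G = (G_1, e, G_2)$ the edge $e = e(y)$ attaches the root $y$ of $G_2$ to the parent of $y$ in $G_1$ (a vertex that need not be the root $r$ of $G_1$), whereas Lemma \ref{opind1} is formulated for an edge joining the roots of the two pieces. The ``operate'' construction built into $\to_{\op}$ is tailor-made to bridge exactly this discrepancy.

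Concretely, I would proceed in three steps. First, let $H^\ast := (G_1, \tilde e, G_2)$, where $\tilde e$ is the edge $\{r, y\}$ of color $\kappa$; by the definition of the operate construction, $H^\ast$ is the graph operated from $G$ with respect to the non-root vertex $y$, and it lies in $\overline{\cG}_n$ by Lemma \ref{connectRT}(i). Second, observe that $G \le_{\op} H^\ast$: if $y$ is not a $1$-level vertex of $G$ then $G \to_{\op} H^\ast$ directly, and if $y$ is a $1$-level vertex then $H^\ast = G$ and reflexivity of $\le_{\op}$ applies (this is the content of the remark preceding the corollary). Third, $H^\ast$ now has exactly the shape required by Lemma \ref{opind1}(iii), namely an edge of color $\kappa$ connecting the roots of the two summands, so applying that lemma with the hypotheses $G_1 \le_{\op} H_1$ and $G_2 \le_{\op} H_2$ yields $H^\ast \le_{\op} H$. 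Transitivity of $\le_{\op}$ (Lemma \ref{wldfop}) then gives $G \le_{\op} H$, completing the argument.

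There is essentially no obstacle beyond getting the bookkeeping right: all the work is concentrated in Lemma \ref{opind1}, which has already been proved. The only point that requires a moment of care is verifying that the intermediate graph $H^\ast$ really is in $\overline{\cG}_n$ regardless of whether $y$ is or is not a $1$-level vertex of $G$, which is handled uniformly by Lemma \ref{connectRT}(i) together with the convention in Definition \ref{partop} that allows the chain length $k=0$. Given that, the proof is a one-line composition of ``operate at $y$'' followed by ``apply Lemma \ref{opind1}(iii)''.
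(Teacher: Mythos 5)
Your proof is correct and is essentially identical to the paper's: both form the intermediate graph $(G_1,\tilde e,G_2)$ obtained by operating on $G$ with respect to $y$, note $G \le_{\op}$ this graph, and then apply Lemma \ref{opind1} and transitivity. Your extra care about the $1$-level case and membership in $\overline{\cG}_n$ is consistent with the paper's remarks and adds nothing problematic.
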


\begin{proof}
Let $H' = (G_1, e'', G_2),$ where $e''$ is an edge connecting the roots of $G_1$ and $G_2$ of color $\kappa.$ $H'$ is operated from $G$ with respect to $y.$ Thus, $G \le_{\op} H'.$ But by Lemma \ref{opind1}, we have that $H' \le_{\op} H.$ Thus, $G \le_{\op} H.$
\end{proof}

\begin{proof}[Proof of Proposition \ref{vanish4}]

If $n=1,$ it is trivial to check that the proposition is true. Hence, we assume $n \ge 2.$

We prove (i) first. $\llangle o_G, T \rrangle = 0$ unless $\beta_{o_G, T}$ is a color-preserving bijection. It is enough to show that
$$\beta_{o_G, T}: \{ \mbox{edges of $o_G$} \} \to \{ \mbox{internal vertices of $T$} \}$$
being a color-preserving bijection implies that $G \le_{\op} \sG(T).$ Recall the map $\beta_{o_G,T}$ is defined in Definition \ref{pairing}. In fact, the definition of this map has nothing to do with the orientation of the edges of $o_G.$ Therefore, we can define an equivalent map in terms of $G,$ the unoriented copy of $o_G:$
$$\tilde{\beta}_{G,T}: \{ \mbox{edges of $G$} \} \to \{ \mbox{internal vertices of $T$} \}$$
sends an edge $e = \{i, j\}$ in $G$ to the nadir of the shortest path $p_T(e)$ between $i$ and $j$ on $T.$ 
Our goal becomes to show that 
\begin{equation*}
\tag{$\triangle$} \tilde{\beta}_{G,T} \mbox{ is a color-preserving bijection } \Rightarrow G \le_{\op} \sG(T).
\end{equation*}
We will show $(\triangle)$ by induction on $n,$ the size of the alphabet $X.$ When $n= 2,$ one checks $ \tilde{\beta}_{G,T}$ is a color-preserving bijection if and only if  $G = \sG(T).$ Assuming $(\triangle)$ holds when $|X| < n,$ we will show $(\triangle)$ holds when $|X|=n.$ Suppose $\tilde{\beta}_{G,T}$ is a color-preserving bijection. Let $e = \{x, y\}$ be the edge of $G$ that is in color-preserving bijection with the root of $T$ under $\tilde{\beta}_{G,T}.$ Without loss of generality, we assume $x$ is the parent of $y$ in $G.$ Let $G_1 = \tilde{G}(y)$ and $G_2 = G(y)$ be the two graphs obtained by removing the edge $e$ in $G.$ Suppose $G_i$ is on alphabet $X_i,$ for $i=1,2.$ Let $T_1$ and $T_2$ be the left subtree and the right subtree of $T$. Without loss of generality,  we assume $x$ is a leaf of $T_1$ and $y$ is a leaf of $T_2,$ respectively. Since $\tilde{\beta}_{G,T}$ is a color-preserving bijection, we must have, for $i=1$ and $2,$ that the leaves of $T_i$ are labeled by $X_i$, and $\tilde{\beta}_{G_i,T_i}$ is a color-preserving bijection. The size of $X_i$ is smaller than $n,$ so by the induction hypothesis, $G_i \le_{\op} \sG(T_i),$ for $i =1,2.$ Applying Corollary \ref{opind2}, we get $G \le_{\op} \sG(T).$

Now we will prove (ii). Note that $\llangle o_G, T \rrangle = \pm 1$ if and only if $\beta_{o_G, T}$ is a color-preserving bijection if and only if $\tilde{\beta}_{G, T}$ is a color-preserving bijection. Hence, it is enough to show that 
\begin{equation*}
\tag{$\square$} G = \sG(T)  \Rightarrow  \tilde{\beta}_{G,T} \mbox{ is a color-preserving bijection}.
\end{equation*}
We show $(\square)$ by induction on $n.$ As we stated earlier, when $n= 2,$ we have that $ \tilde{\beta}_{G,T}$ is a color-preserving bijection if and only if  $G = \sG(T).$ Assuming $(\square)$ holds when $|X| < n,$ we will show $(\square)$ holds when $|X|=n.$ We still let $T_1$ and $T_2$ be the left subtree and the right subtree of $T.$ Let $G_1 = \sG(T_1)$ with root $r_1,$ $G_2 = \sG(T_2)$ with root $r_2,$ and $e = \{r_1, r_2\}$ is an edge with the same color as the root of $T.$ Then $G = (G_1, e, G_2).$ By induction hypothesis, $\tilde{\beta}_{G_i,T_i}$ is a color-preserving bijection, for $i =1,2.$ Let $e'$ be an edge in $G.$ If $e'$ is in $G_i$ for $i=1$ or $2,$ then $\tilde{\beta}_{G,T}$ sends $e'$ to $\tilde{\beta}_{G_i,T_i}(e');$ otherwise, $e' = e = \{r_1, r_2\},$ then $\tilde{\beta}_{G,T}$ sends $e'$ to the root of $T,$ which has the same color as $e'.$ Therefore, $\tilde{\beta}_{G,T}$ is a color-preserving bijection.

\end{proof}

Lemma \ref{vanish3} follows from Lemma \ref{basesprop} and Proposition \ref{vanish4}. Because we had assumed Lemma \ref{vanish3} in the proof of Proposition \ref{nonsing}, only now can we consider the proofs of Theorem \ref{main1} and Theorem \ref{main3}, as well as of the two corollaries stated at the end of the last section, to be truly complete.

Since we know that the rank of $\Lie_2(n)$ is $n^{n-1}$, for any $(n^{n-1})$-subset of monomials $S$ of $M_n,$ if the matrix of the pairing $\llangle , \rrangle$ between $S$ and $\cO_n(X)$ is nonsingular, then we can conclude that $S$ is a basis for $\Lie_2(n).$ Using this observation, we are able to prove Theorem \ref{main4}.

\begin{proof}[Proof of Theorem \ref{main4}]
$\sG$ induces a bijection between $S$ and $\overline{\cG}_n,$ so the cardinality of $S$ is $n^{n-1}.$ Also, we can index the elements in $S$ by $\overline{\cG}_n:$ $$S = \{ s_G \ | \ G \in \overline{\cG}_n \},$$ where $s_G$ is the element in $S$ that maps to $G$ under $\sG,$ i.e., $\sG(s_G) = G.$

Let $\le$ be a linear extension of $\le_{\op}$ on $\overline{\cG}_n.$ Suppose under $\le,$ the graphs in $\overline{\cG}_n$ are ordered by $G_1 < G_2 < \cdots < G_{n^{n-1}}.$ We define the matrix of the pairing $\llangle , \rrangle$ with respect to $\le$ between $S$ and $\cO_n(X)$, denoted by $\cM_{n}^\le(S)$, to be the $n^{n-1} \times n^{n-1}$ matrix where the $(i,j)$-entry is given by $\llangle o_{G_i}, s_{G_j} \rrangle,$ for $1 \le i, j \le n^{n-1}.$ Similarly to the proof of Proposition \ref{nonsing}, we can show that $\cM_{n}^\le(S)$ is an upper triangular matrix with invertible entries on the diagonal. Hence, $\cM_{n}^\le(S)$ is nonsingular. Therefore, $S$ is a basis for $\Lie_2(n).$
\end{proof}

\section{Equivalence of Theorem \ref{main1} and Theorem \ref{main2}}
In this section, we will establish a connection between the bases for $\Lie_2(n)$ and the bases for $\sP_2(n)$ and show that Theorem \ref{main1} and Theorem \ref{main2} are equivalent to one another. 

\begin{prop}\label{com}
Fix the alphabet $X.$ 
Suppose for any subset $Y$ of $X,$ we have a basis $\sB(Y)$ for  $\Lie_2(|Y|)$ on the alphabet $Y$.  
We define $\sB_n^{\Com}(X)$ to be the set of products (under the commutative multiplication in $\sP_2(n)$) $b_{1} b_{2} \cdots b_{k},$ where each $b_i$ is in the basis $\sB(X_i)$ for $\Lie_2(|X_i|)$ (on the alphabet $X_i$),  and $\bigcup_{i=1}^k X_i$ is a partition of $X$ with $\max(X_1) < \cdots < \max(X_k).$ Then
$\sB_n^{\Com}(X)$ is a basis for $\sP_2(n).$
\end{prop}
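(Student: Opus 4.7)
The plan is to establish the proposition in two stages: spanning and linear independence. For spanning, I would show that every monomial of $\sP_2(n)$ is a linear combination of products $L_1 L_2 \cdots L_k$ where each $L_i$ is a pure Lie monomial on a disjoint subalphabet $X_i \subseteq X$ and $X = X_1 \sqcup \cdots \sqcup X_k$. The reduction uses the derivation relations (D1) and (D2): whenever a bracket contains a commutative subproduct, (D1) or (D2) rewrites it as a sum of terms in which that bracket is applied only to the individual factors, strictly decreasing the natural statistic counting commutative products that occur strictly inside some bracket. After finitely many applications, all commutative multiplications appear only at the top level. Commutativity of the product then lets me reorder the factors so that $\max(X_1) < \cdots < \max(X_k)$, and finally each Lie factor $L_i$ is expanded in the given basis $\sB(X_i)$, proving that $\sB_n^{\Com}(X)$ spans $\sP_2(n)$.

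For linear independence, I would extend the Eilenberg-dual construction of Sections 5--7 from trees to forests. Let $\Eil_2^{\Com}(n)$ be the quotient of the free $R$-module on oriented two-colored \emph{forests} on $X$ by the submodule generated by symmetry, Jacobi, and mixed Jacobi combinations taking place within a single component, together with forests containing multiple edges between two vertices. (Crucially, disconnected forests are \emph{not} killed here, since they correspond to genuine products in $\sP_2(n)$.) Extend the pairing so that $\llangle F, T_1 T_2 \cdots T_k \rrangle$ factors as $\prod_i \llangle F_i, T_i \rrangle$ when the component $F_i$ and the factor $T_i$ have matching vertex sets $X_i$ (ordered by $\max(X_i)$), and vanishes otherwise. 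A natural basis candidate for $\Eil_2^{\Com}(n)$ is then $\cO_n^{\Com}(X)$, the set of oriented forests obtained as disjoint unions of elements of $\cO_{|X_i|}(X_i)$ indexed by partitions with $\max(X_1) < \cdots < \max(X_k)$, and its cardinality matches that of $\sB_n^{\Com}(X)$.

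To finish, I would show that the pairing matrix between $\sB_n^{\Com}(X)$ and $\cO_n^{\Com}(X)$ is upper triangular with $\pm 1$ on the diagonal, using the componentwise partial order induced by $\le_{\op}$. Since the pairing is multiplicative across components when the partitions agree, this step reduces immediately to the already-established Lemma \ref{vanish3} inside each block. The main obstacle is the verification that the derivation relations (D1) and (D2) are annihilated by the extended pairing: these relations mix the bracket and the commutative product in a single equation and thus are genuinely new compared to the relations handled in Proposition \ref{vanish1}, so the combinatorial correspondence between a commutative product and a component-decomposition of the dual forest has to be traced carefully to confirm the cancellation.
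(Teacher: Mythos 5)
Your overall strategy coincides with the paper's: spanning is obtained by pushing commutative products to the top level via (D1)/(D2) and then expanding each Lie factor in $\sB(X_i)$, and independence is obtained from a perfect pairing with a ``forest-dual'' space --- your $\Eil_2^{\Com}(n)$ is exactly the paper's $\sQ_2(n)$, defined as $\Gamma_n/I_n^{\Com}$ where $I_n^{\Com}$ omits the disconnected graphs from the generators of $I_n$ --- together with a block structure indexed by partitions of $X$ that reduces each block to the already-proved perfect pairing between $\Lie_2(|X_i|)$ and $\Eil_2(|X_i|)$ (the paper does the block step via Kronecker products of the matrices $\cM_i$; your ``upper triangular with $\pm1$ diagonal under the componentwise $\le_{\op}$'' is an equivalent packaging).

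There is, however, a concrete gap at the point you yourself flag as the main obstacle. You define the extended pairing only by decreeing that $\llangle F, T_1\cdots T_k\rrangle$ factors as $\prod_i \llangle F_i, T_i\rrangle$ over matching components, i.e., only on \emph{products of Lie monomials}. But to prove that the pairing descends to $\sP_2(n)$ you must check that it annihilates the derivation relations, and (D1)/(D2) involve monomials such as $[x_1, x_2x_3]$ in which a commutative product sits strictly inside a bracket; these are single-factor monomials of $\sP_2(n)$ that are \emph{not} Lie monomials, so your pairing is simply not defined on them and the required verification cannot even be formulated. (The same problem afflicts the vanishing on non-matching partitions: this is a lemma to be proved about a pairing defined on the ambient free modules, not something one can build into the definition.) The paper fills this hole by encoding \emph{every} monomial of $\sP_2(n)$ as a $2$v-colored quasi-binary tree in $\QBT_n$ and defining $\llangle G, T\rrangle_{\Com}$ via nadirs of paths on such trees (Definition \ref{pairing2}); only then can one check vanishing on derivation combinations (Proposition \ref{vanish1com}) and derive the block-diagonal structure by counting internal odd-level vertices (Lemma \ref{vanish3com}). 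Your proposal needs this intermediate combinatorial model --- or some substitute definition of the pairing on all of $M_n^{\Com}$ --- before the independence argument is complete.
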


The reason we define this set is natural: it is easy to prove by induction that each monomial in $\sP_2(n)$ can be written as a linear combinations of elements of the form of $m_1 m_2 \cdots m_k,$ where $m_i$ is a monomial in $\Lie_2(|X_i|)$ for each $i,$ and $\bigcup_{i=1}^k X_i$ is a partition of $X$ with $\max(X_1) < \cdots < \max(X_k).$ Therefore, we have the following proposition.
\begin{prop}
$\sB_n^{\Com}(X)$ spans $\sP_2(n).$
\end{prop}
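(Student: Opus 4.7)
The plan is to follow the hint in the remark and reduce to the claim that every monomial in $\sP_2(n)$ can be written as a linear combination of \emph{Lie products} $m_1 m_2 \cdots m_k$, where each $m_i$ is a monomial in $\Lie_2(|X_i|)$ and $X_1 \sqcup \cdots \sqcup X_k = X$. Once this reduction is established, the rest is formal: the commutativity of the multiplication in $\sP_2(n)$ lets us reorder the factors of any Lie product so that $\max(X_1) < \cdots < \max(X_k)$ (the maxima are pairwise distinct since the $X_i$ are disjoint), and then expanding each $m_i$ in the given basis $\sB(X_i)$ of $\Lie_2(|X_i|)$ writes the original monomial as a linear combination of elements of $\sB_n^{\Com}(X)$.

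To prove the reduction I would induct on the number of operations used to build $m \in \sP_2(n)$. A single letter is a Lie product of length one. Otherwise $m = m' \star m''$ for some $\star \in \{\brkone, \brktwo, \cdot\}$, and by the inductive hypothesis together with the bilinearity of all three operations we may assume that $m' = p_1 \cdots p_a$ and $m'' = q_1 \cdots q_b$ are individual Lie products, supported on alphabets $X'_1, \ldots, X'_a$ and $Y''_1, \ldots, Y''_b$ respectively. If $\star$ is the commutative multiplication, concatenation immediately produces a Lie product on the combined partition. If $\star = \brkone$, an inner sub-induction on $a+b$ using (D1) (and one application of (S1) to distribute on the left-hand argument) gives
$$[p_1 \cdots p_a,\, q_1 \cdots q_b] \;=\; \sum_{i=1}^{a}\sum_{j=1}^{b} \Bigl(\prod_{i' \ne i} p_{i'}\Bigr)\Bigl(\prod_{j' \ne j} q_{j'}\Bigr)\,[p_i, q_j],$$
and each summand is a Lie product because $[p_i, q_j]$ is itself a monomial in $\Lie_2(|X'_i \cup Y''_j|)$. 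The case $\star = \brktwo$ is identical using (D2) and (S2).

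The main obstacle, such as it is, lies in verifying the iterated derivation identity cleanly; notably none of the Jacobi-type relations (J1), (J2), (MJ) is needed for this step, and the only fact we use from the earlier sections is that $\sB(Y)$ spans $\Lie_2(|Y|)$ for each $Y \subseteq X$. Hence the proposition is ultimately a formality once the reduction to Lie products is carried out, essentially the inductive unfolding the authors indicate in the preceding sentence.
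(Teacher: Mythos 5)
Your proof is correct and follows essentially the same route as the paper, which merely asserts that ``it is easy to prove by induction that each monomial in $\sP_2(n)$ can be written as a linear combination of elements of the form $m_1 m_2 \cdots m_k$'' and then expands each factor in the given basis. Your iterated derivation identity (via (D1), (D2) and antisymmetry) is exactly the induction the paper leaves to the reader, and the reordering and basis-expansion steps match the paper's intent.
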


Therefore, $\sB_n^{\Com}(X)$ is a basis candidate for $\sP_2(n).$ As usual, the proof of independence is more complicated. Proposition \ref{com} can be proved directly by more abstract methods; see Corollary 1 in \cite{dot-kho}. To make our paper self-contained, however, we include a different proof. We put the proof of independence in the next section, so that the uninterested reader can easily skip it.

Assuming Proposition \ref{com}, we can immediately construct a basis for $\sP_2(n)$ from $\cB_n(X),$ a basis for $\Lie_2(n).$
\begin{cor}\label{bncom}
Let $\cB_n^\Com(X)$ be the set of  products $b_{G_1} \cdots b_{G_k},$ where $G_1, \dots, G_k$ are components (or rooted trees) in the forest of rooted trees on $X$ with $\max(G_1) < \cdots < \max(G_k).$ Then $\cB_n^\Com(X)$ is a basis for $\sP_2(n).$
\end{cor}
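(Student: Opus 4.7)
The plan is to apply Proposition \ref{com} directly, with the specific choice $\sB(Y) := \cB_{|Y|}(Y)$ for every subset $Y \subseteq X$. The (unlabeled) corollary at the end of Section \ref{perfpair} establishes that $\cB_n(X)$ is a basis for $\Lie_2(n)$; the construction of $\cB_n(X)$ from $\overline{\cG}_n = \cR_n$ depends only on the linear order on the alphabet, so the identical argument, applied to the restriction of our order to $Y$, shows that $\cB_{|Y|}(Y)$ is a basis for the copy of $\Lie_2(|Y|)$ on the alphabet $Y$. Thus the hypothesis of Proposition \ref{com} is satisfied by this family of bases.

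Once that is in place, I would verify that the set $\sB_n^{\Com}(X)$ produced by Proposition \ref{com} coincides with the set $\cB_n^\Com(X)$ described in the statement. By definition, $\sB_n^{\Com}(X)$ consists of products $b_1 b_2 \cdots b_k$ with $b_i \in \cB_{|X_i|}(X_i)$, where $X = X_1 \sqcup \cdots \sqcup X_k$ is a partition satisfying $\max(X_1) < \cdots < \max(X_k)$. Since each element of $\cB_{|X_i|}(X_i)$ is of the form $b_{G_i}$ for a unique $G_i \in \cR_{|X_i|}$ on the vertex set $X_i$, the tuple $(G_1, \dots, G_k)$ is precisely a forest of rooted trees on $X$ whose components have vertex sets $X_1, \dots, X_k$. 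Because $\max(G_i) = \max(X_i)$, the ordering condition $\max(G_1) < \cdots < \max(G_k)$ in the statement of the corollary matches the ordering condition in Proposition \ref{com} verbatim, so $\sB_n^{\Com}(X) = \cB_n^\Com(X)$.

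There is no genuine obstacle here: the corollary reduces entirely to Proposition \ref{com} combined with the fact, proved in Section \ref{perfpair}, that $\cB_m(Y)$ is a basis for $\Lie_2(m)$ on any alphabet $Y$ of size $m$. The only step requiring care is the bookkeeping that identifies a partition of $X$ together with a choice of rooted tree on each block with a forest of rooted trees on $X$ ordered by the maxima of the component vertex sets, and this identification is immediate.
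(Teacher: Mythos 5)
Your proposal is correct and is essentially the paper's own argument: the paper derives Corollary \ref{bncom} immediately from Proposition \ref{com} by taking $\sB(Y) = \cB_{|Y|}(Y)$ for each $Y \subseteq X$ and identifying a partition of $X$ equipped with a rooted tree on each block with a forest of rooted trees on $X$ ordered by the maxima of its components. Your explicit bookkeeping of that identification is a slight elaboration of what the paper leaves implicit, but the route is the same.
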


Below is another corollary to Proposition \ref{com}.

\begin{cor}\label{equiv}
Theorem \ref{main1} is equivalent to Theorem \ref{main2}.
\end{cor}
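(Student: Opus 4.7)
The plan is to use Corollary \ref{bncom} (itself a direct consequence of Proposition \ref{com}) together with the combinatorial identity
\[
\sum_{\pi} \prod_{B \in \pi} |B|^{|B|-1} = (n+1)^{n-1},
\]
where $\pi$ runs over set-partitions of $X$. The left-hand side counts rooted forests on $X$ grouped by their underlying set-partition of components, while the right-hand side is the standard count via the bijection attaching a new vertex to every root, which produces a tree on $n+1$ labeled vertices, combined with Cayley's formula. This identity is the arithmetic heart of the corollary, since it matches $|\cB_n^{\Com}(X)|$ with $\rk\,\sP_2(n)$ precisely when $\rk\,\Lie_2(m) = m^{m-1}$ for all $m \le n$.

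For the direction Theorem \ref{main1} $\Rightarrow$ Theorem \ref{main2} I would argue directly. Theorem \ref{main1} supplies bases $\cB_{|Y|}(Y)$ for every $\Lie_2(|Y|)$, so Corollary \ref{bncom} assembles these into a basis $\cB_n^{\Com}(X)$ of $\sP_2(n)$ of cardinality $(n+1)^{n-1}$ by the identity above, giving Theorem \ref{main2}.

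The reverse direction requires more care and proceeds by induction on $n$, with the trivial base $n=1$. Granting that $\Lie_2(m)$ is free of rank $m^{m-1}$ for $m < n$, Proposition \ref{liespan} (applied block by block) together with the proposition preceding Corollary \ref{bncom} shows that $\cB_n^{\Com}(X)$ spans $\sP_2(n)$, and the combinatorial identity gives $|\cB_n^{\Com}(X)| = (n+1)^{n-1} = \rk\,\sP_2(n)$ by Theorem \ref{main2}. A spanning set of rank-many elements in a finitely generated free module over a commutative ring is automatically a basis (surjective endomorphisms of $R^r$ are injective), so $\cB_n^{\Com}(X)$ is a basis. In particular the subset $\cB_n(X) \subset \cB_n^{\Com}(X)$ coming from single-block partitions is linearly independent in $\sP_2(n)$.

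The main subtlety, and the step I expect to need the most care, is transferring this independence from $\sP_2(n)$ back to $\Lie_2(n)$. I would use the natural module homomorphism $\phi : \Lie_2(n) \to \sP_2(n)$, which is well-defined because the defining relations (S1), (S2), (J1), (J2), (MJ) of $\Lie_2(n)$ are included among those of $\sP_2(n)$, and which carries each $b_G$ to the element of $\sP_2(n)$ with the same name. The image $V := \phi(\Lie_2(n))$ is generated by the linearly independent set $\cB_n(X)$, so $V$ is free of rank $n^{n-1}$ with basis $\cB_n(X)$. The composition $R^{n^{n-1}} \twoheadrightarrow \Lie_2(n) \xrightarrow{\phi} V$, where the first surjection is determined by the spanning set $\cB_n(X)$, sends standard basis vectors to a basis of $V$ and is therefore an isomorphism; this forces each factor to be an isomorphism, and we conclude $\Lie_2(n) \cong R^{n^{n-1}}$, completing the induction.
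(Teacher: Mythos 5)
Your proof is correct, but it takes a genuinely different route from the paper's in one direction. The paper deduces from Proposition \ref{com} the rank recursion $p(|X|) = \sum l(|X_1|)\cdots l(|X_k|)$ over ordered partitions, packages it as the exponential-generating-function identity $P(x) = e^{L(x)}$, and then cites the standard fact that under this relation $l(n)=n^{n-1}$ holds for all $n$ if and only if $p(n)=(n+1)^{n-1}$ does; both implications come for free and the forest/tree enumeration is hidden inside the EGF formalism. Your forward direction is the same in substance, with the bijective identity $\sum_{\pi}\prod_{B\in\pi}|B|^{|B|-1}=(n+1)^{n-1}$ (attach a new vertex to every root and apply Cayley) replacing the generating-function computation. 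Your reverse direction, however, is built differently and is arguably more careful: the paper's argument implicitly presupposes that $\Lie_2(n)$ is free with a basis (that hypothesis is baked into Proposition \ref{com} and into writing down $l(n)$ at all), whereas you use only the spanning statements (Proposition \ref{liespan} and the proposition preceding Corollary \ref{bncom}), the cardinality bound $|\cB_n^{\Com}(X)|\le (n+1)^{n-1}=\rk\,\sP_2(n)$, the fact that a generating set of a free module over a commutative ring whose cardinality equals the rank is automatically a basis, and then a transfer of independence along the natural map $\phi:\Lie_2(n)\to\sP_2(n)$ (well-defined since the relations of $\Lie_2(n)$ are among those of $\sP_2(n)$) combined with surjectivity of $R^{n^{n-1}}\to\Lie_2(n)$. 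This costs you a longer argument but buys a reverse implication that does not presuppose freeness of $\Lie_2(n)$, and it yields the extra conclusion that $\cB_n(X)$ is a basis. Both approaches are valid; since the paper only ever uses the forward direction (Theorem \ref{main1} having been proved independently), the extra care in your reverse direction is not needed for the paper's logic, but it does make the stated equivalence stand on its own.
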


\begin{proof}Let $l(n)$ and $p(n)$ be the ranks of $\Lie_2(n)$ and $\sP_2(n),$ respectively. By convention, we set $l(0) = 0$ and $p(0) =1.$ Define the exponential generating functions of $l(n)$ and $p(n)$ to be 
$$L(x) = \sum_{n=0}^\infty l(n) \frac{x^n}{n!}, \  P(x) = \sum_{n=0}^\infty p(n) \frac{x^n}{n!}.$$
By Proposition \ref{com}, we have
$$p(|X|) = \sum l(|X_1|) l(|X_2|) \cdots l(|X_k|),$$
where the sum is over all partitions $\bigcup_{i=1}^k X_i$ of $X$ with $\max(X_1) < \cdots < \max(X_k).$ 
Then by Corollary 5.1.6 of \cite{ec2}, we have $$P(x) = e^{L(x)}.$$
It is well known that if two exponential generating functions $L(x)$ and $P(x)$ satisfy the above formula, then $l(n) = n^{n-1}$ if and only if $p(n) = (n+1)^{n-1}.$ (See Section 5.3 of \cite{ec2} for a proof.)
\end{proof}

Since we already proved Theorem \ref{main1}, Theorem \ref{main2} follows from this corollary.

\section{Another perfect pairing and Quasi-binary trees}

The basic idea of the proof of the independence of $\sB_n^\Com(X)$ is the same as for the independence of $\cB_n(X):$ we use a perfect pairing. We first need to describe $\sP_2(n)$ in terms of combinatorial objects.

\begin{defn}
A {\it quasi-binary tree} is a rooted tree with root $r$ such that all leaves are odd-level vertices, and with the following orderings and degree restrictions:
\begin{alist}
\itm For any non-leaf odd-level vertex, it has degree two, and its children are ordered. In other words, we distinguish its left child and right child. If we switch the order of the left child and right child of an odd level vertex, we consider the newly obtained tree to be different from the original one.
\itm For any even-level vertex, it can have any nonzero degrees, and its children are not ordered. 
\end{alist}

Here, we use the same definition of the {\it level} of a vertex as in Definition \ref{l-level}/c): a vertex $x$ of $T$ is an $\ell$-level vertex if the unique path from $x$ to $r$ has length $\ell.$


A {\it $2$v-colored quasi-binary tree} is a quasi-binary tree whose odd-level vertices are colored by red or blue. We denote by $\QBT_n$ the set of all $2$v-colored quasi-binary trees whose leaves are labeled by $X.$

\end{defn}

\begin{rem}
We denote by $M_n^\Com$ the set of all the monomials in $\sP_2(n).$ Similarly to the case of $\BT_n$ and $M_n,$ there is a canonical bijection between $\QBT_n$ and $M_n^\Com:$ given a $2$v-colored quasi-binary tree, each leaf denotes a letter in $X,$ and we can construct a monomial in $M_n^\Com$ recursively by interpreting each odd-level vertex as a bracket of the left and right subtrees, with red vertices corresponding to $\brkone$ and blue vertices corresponding to $\brktwo,$ and interpreting each even-level vertex as a commutative product of its children.


\end{rem}

\begin{ex}[Example of the bijection between $\QBT_n$ and $M_n^\Com$]
Figure \ref{exmncom} shows the 2v-colored quasi-binary tree corresponding to the monomial $x_1 [x_2 x_3 x_4, \langle x_5, x_6 x_7 \rangle ].$ We use dashed lines to indicate edges below even-level vertices and solid lines to indicate edges below odd-level verrtices. Black vertices are even-level vertices.
\begin{figure}
\begin{center}
 
\input{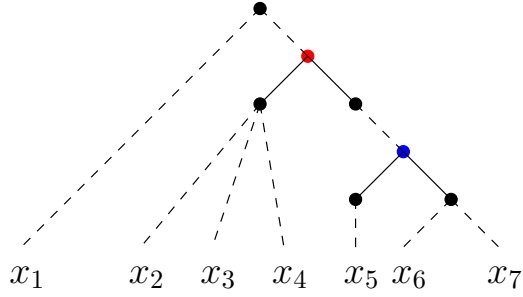}
 
\caption{The 2v-colored quasi-binary tree corresponds to the monomial $x_1 [x_2 x_3 x_4, \langle x_5, x_6 x_7 \rangle ].$}
\label{exmncom}
\end{center}
\end{figure}
\end{ex}

Since we can consider $\Lie_2(n)$ to be a submodule of $\sP_2(n),$ $BT_n = M_n$ is a subset of $M_n^\Com.$ Under the canonical bijection between $\QBT_n$ and $M_n^\Com,$ it is easy to see that $BT_n = M_n$ is in bijection with the set of trees $T$ in $\QBT_n$ satisfying
\begin{equation*}
\tag{$\diamond$} 
\mbox{each even-level vertex of $T$ has exact one child.}
\end{equation*}
In fact, for any tree $T$ in $\QBT_n$ satisfying ($\diamond$), if we contract all the edges below even-level vertices, then we obtain exactly the corresponding tree in $\BT_n.$ Therefore, we use the same notation $\BT_n$ to denote the set of trees $T$ in $\QBT_n$ satisfying ($\diamond$), and we can consider $\BT_n$ to be a subset of $\QBT_n.$ The graphs in Figure \ref{btqbt} show the 2v-colored binary tree and quasi-binary tree corresponding to the monomial $\langle [x_2, x_3], x_1 \rangle.$ It is easy to see one obtains the left graph by contracting all the edges below even-level vertices (or the dashed edges) in the right graph.

It is natural to extend the pairing we defined between $\BT_n$ and $\OG_n$ (Definition \ref{pairing}) to a pairing between $\QBT_n$ and $\OG_n.$

\begin{defn}\label{pairing2}
Given a $2$v-colored quasi-binary tree $T$ in $\QBT_n$ and an oriented two-colored graph $G$ in  $\OG_n,$ define 
$$\beta_{G,T}: \{ \mbox{edges of $G$} \} \to \{ \mbox{internal vertices of $T$} \}$$
by sending an edge $e: i \to j$ in $G$ to the nadir of the shortest path $p_T(e)$ from $i$ to $j$ on $T.$ Let $\tau_{G,T} = (-1)^N,$ where $N$ is the number of edges $e$ in $G$ for which $p_T(e)$ travles counterclockwise at its nadir. We say $\beta_{G,T}$ is {\it color-preserving} if for any edge $e \in G,$ $\beta_{G,T}(e)$ is an odd-level vertex of $T$ and the color of $\beta_{G,T}(e)$ is the same as the color of $e.$

Define the {pairing} of $G, T$ as
$$\llangle  G, T \rrangle_\Com =
\begin{cases}\tau_{G,T}, & \mbox{if $\beta_{G,T}$ is color-preserving, and gives a bijection between} \\
 & \mbox{\{edges of $G$\} and \{internal odd-level vertices of $T$\};} \\
0, & \mbox{otherwise.}
\end{cases}$$
\end{defn}

It is easy to check that this definition of $\beta_{G,T}$ is consistent with the one we defined in Definition \ref{pairing} when $T \in \BT_n.$ We immediately conclude the following lemma.

\begin{lem}\label{consist}
For any $T \in \BT_n$ and any $G \in \OG_n,$ we have
$$\llangle  G, T \rrangle_\Com = \llangle  G, T \rrangle.$$
\end{lem}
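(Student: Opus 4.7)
The plan is to exploit the fact that when $T \in \BT_n$ is embedded into $\QBT_n$ via condition ($\diamond$), every even-level vertex of $T$ has exactly one child, which forces the two pairings to compute the same thing. First I would make precise the identification between the data in the two settings: contracting the dashed edges (those emanating from even-level vertices) converts the embedded quasi-binary tree $T$ back to the binary tree, and in this process each even-level vertex is merged with its unique odd-level child. This yields a natural bijection between the internal vertices of $T$ viewed as an element of $\BT_n$ and the internal odd-level vertices of $T$ viewed as an element of $\QBT_n$, which preserves colors and preserves the left/right ordering of children at each such vertex.

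The key observation is that, for any edge $e = \{i, j\}$ of $G$, the nadir of $p_T(e)$ computed in the quasi-binary view must be an odd-level vertex. Indeed, the nadir is the least common ancestor of $i$ and $j$ in $T$, and a vertex can serve as the LCA of two of its descendants only if those descendants lie in subtrees rooted at two distinct children. Since every even-level vertex of our embedded $T$ has exactly one child, no even-level vertex can be an LCA. Hence the nadir of $p_T(e)$ in the quasi-binary view corresponds, under the identification above, to the nadir in the binary view, and $\beta_{G,T}$ computed via Definition \ref{pairing2} matches $\beta_{G,T}$ computed via Definition \ref{pairing}.

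Once the nadirs are seen to correspond, the remaining pieces of the two pairings agree tautologically: the color-preserving condition becomes the same statement on both sides (the image of $\beta_{G,T}$ is automatically odd-level, so requiring a color match recovers the condition of Definition \ref{pairing}); the surjectivity/bijectivity condition in Definition \ref{pairing2} onto internal odd-level vertices corresponds under the identification to the bijection condition of Definition \ref{pairing} onto internal vertices of the binary tree; and the local notion of ``travels counterclockwise at its nadir'' depends only on the left/right ordering at the nadir, which is preserved by contraction. Combining these observations gives $\llangle G, T \rrangle_\Com = \llangle G, T \rrangle$. The only substantive step is the nadir-is-odd-level observation, and even that is immediate from the single-child property of even-level vertices in the embedding, so I do not anticipate a genuine obstacle.
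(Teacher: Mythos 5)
Your proof is correct and takes essentially the same approach as the paper: the paper simply asserts that the two definitions of $\beta_{G,T}$ agree when $T \in \BT_n$ and states the lemma as an immediate consequence, while you supply the routine verification (in particular the key observation that an even-level vertex, having a unique child in the embedded tree, can never be the nadir of a simple leaf-to-leaf path). Nothing further is needed.
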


\begin{defn} 
Let $\Theta_n^\Com$ be the free $R$-module generated by the $2$v-colored quasi-binary trees in $\QBT_n$ and recall that $\Gamma_n$ is the free $R$-module generated by the oriented two-colored graphs in $\OG_n.$ Extend the pairing $\llangle \ , \ \rrangle_\Com$ defined in Definition \ref{pairing2} to one between $\Theta_n^\Com$ and $\Gamma_n$ by linearity.
\end{defn}

$\Theta_n^\Com$ is not isomorphic to $\sP_2(n),$ because we did not define relations between the elements of $\QBT_n.$ We now define a submodule of $\Theta_n^\Com$ which corresponds to the relations in $\sP_2(n).$ 
Besides symmetry combinations, Jacobi combinations, and mixed Jacobi combinations, the extra combinations we need to define are the ones corresponding to the derivation relations (D1) and (D2). 

We will not formally define symmetry combinations, Jacobi combinations, and mixed Jacobi combinations for $\Theta_n^\Com,$ because they are very similar to the combinations we defined for $\Theta_n.$ In fact, the pictures of symmetry combinations in $\Theta_n^\Com$ look exactly the same as those in $\Theta_n.$ The pictures of (mixed) Jacobi combinations look very similar.  In Figure \ref{qbtrel}, we show what a Jacobi combination corresponding to the relation (J2) in $\sP_2(n)$ looks like. 
Comparing with the (J2) in Figure \ref{btrel}, the only difference is that there is an extra (dashed) edge between the two involved colored vertices.

\begin{figure}
\begin{center}
 
\input{qbtrelshort.pstex_t}
 
\caption{Examples of elements that generate $J_n^\Com$}
\label{qbtrel}
\end{center}
\end{figure}

\begin{defn} \label{qbcomb}

A {\it derivation combination} in $\Theta_n^\Com$ has the form $T_1 - T_2 - T_3,$ where $T_1, T_2, T_3 \in \QBT_n$ satisfy the following: there exists an even-level vertex $v$ of $T_1,$ and another vertex $w$ which is the right child of a child of $v,$ such that we can divide the subtrees under $w$ into two groups, say $B$ and $C$ such that $T_2$ is obtained from $T_1$ by removing all the subtrees in group $C$ and connecting all of them under vertex $v,$ and $T_3$ is obtained from $T_1$ by removing all the subtrees in group $B$ and connecting all of them under vertex $v.$ We say a derivation combination is of type (D1) or (D2), depending on the color of the parent of $w.$ In Figure \ref{qbtrel}, we show what a derivation combination of type (D1)  looks like. 

Let $J_n^\Com \subset \Theta_n^\Com$ be the submodule generated by symmetry combinations, Jacobi combinations, mixed Jacobi combinations and derivation combinations. 
\end{defn} 

We now can describe $\sP_2(n)$ in terms of $\Theta_n^\Com$ and $J_n^\Com.$

\begin{lem}
$$\sP_2(n) \cong \Theta_n^\Com/J_n^\Com.$$
\end{lem}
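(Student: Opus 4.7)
The plan is to mirror the argument already sketched (implicitly) for the analogous identification $\Lie_2(n) \cong \Theta_n/J_n$, upgrading it to the Poisson setting. The free Poisson algebra with two compatible brackets is by definition the free $R$-module on all formal monomials $M_n^\Com$ built from elements of $X$, two bracket operations, and a commutative associative product, modulo the submodule generated by all instances of (S1), (S2), (J1), (J2), (MJ), (D1), (D2), together with the relations asserting commutativity and associativity of the product. Its multilinear part $\sP_2(n)$ is cut out by restricting to monomials that use each $x_i$ exactly once.

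First I would make precise the canonical bijection $\Phi\colon \QBT_n \longrightarrow M_n^\Com$ already sketched before the statement: reading a $2$v-colored quasi-binary tree from the root, each odd-level vertex becomes $\brkone$ or $\brktwo$ according to its color (applied to the ordered pair of left and right subtrees), and each even-level vertex becomes the commutative product of its (unordered) children. The conventions that odd-level internal vertices have ordered children and that even-level vertices have unordered children precisely encode the noncommutativity of the brackets and the commutativity of the product, while the requirement that children of an even-level vertex be grouped as a single factor under its parent bracket encodes associativity of the product. Thus $\Phi$ extends linearly to a surjective $R$-module map $\widetilde{\Phi}\colon \Theta_n^\Com \twoheadrightarrow \sP_2(n)$.

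Next I would check that $\widetilde{\Phi}$ descends to the quotient $\Theta_n^\Com/J_n^\Com$, i.e.\ that $J_n^\Com \subset \ker \widetilde{\Phi}$. This is a direct inspection, relation by relation: a symmetry combination of type (S$i$) at a subtree $S$ corresponds, via $\Phi$, to an application of (S1) or (S2) inside the monomial at the bracket labeled by the root of $S$; a Jacobi combination of type (J$i$) corresponds to an instance of (J1) or (J2); a mixed Jacobi combination corresponds to (MJ) (this is exactly why the combinatorial definition was rigged to sum two Jacobi shapes with matching color flips on the right child and on the root); and a derivation combination of type (D$i$) at the configuration described in Definition \ref{qbcomb} corresponds to the relation $[x,yz]-[x,y]z-y[x,z]=0$ (or its $\brktwo$ analogue) applied at the bracket below the even-level vertex $v$, where grouping the children of $w$ into $B$ and $C$ is what realizes the factorization $yz$ on the right side of the bracket.

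Finally, I would verify surjectivity of $\widetilde{\Phi}$ (trivial from $\Phi$) and the reverse inclusion $\ker \widetilde{\Phi} \subset J_n^\Com$. For this, observe that by construction $\sP_2(n)$ is the quotient of the free $R$-module on $M_n^\Com$ by the submodule generated by all instances of (S1), (S2), (J1), (J2), (MJ), (D1), (D2); using the bijection $\Phi$ to pull these back to $\Theta_n^\Com$ yields exactly the generators of $J_n^\Com$ listed in Definition \ref{qbcomb}. Hence $\widetilde{\Phi}$ induces an isomorphism $\Theta_n^\Com/J_n^\Com \risom \sP_2(n)$. The only nontrivial bookkeeping is the derivation case: one must be careful that the combinatorial operation of splitting the children of $w$ into groups $B$ and $C$ under a new even-level vertex $v$ realizes all instances of (D1)/(D2), including repeated application yielding $[x, y_1 y_2 \cdots y_k] = \sum_i y_1 \cdots [x,y_i] \cdots y_k$; this is where I expect the only real work, and it follows by induction on the number of children of $v$ using the basic two-factor derivation combination.
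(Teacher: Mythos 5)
Your proposal is correct and follows exactly the route the paper intends: the paper states this lemma without proof, treating it as immediate from the canonical bijection between $\QBT_n$ and $M_n^\Com$ together with the one-to-one correspondence between the generators of $J_n^\Com$ and the relations (S1), (S2), (J1), (J2), (MJ), (D1), (D2), precisely as in the earlier identification $\Lie_2(n)\cong\Theta_n/J_n$. Your additional care about the derivation case --- that grouping the children of $w$ into $B$ and $C$ captures every instance of (D1)/(D2) with composite factors, with the multi-factor Leibniz rule then following by iteration --- is a correct filling-in of the detail the paper leaves implicit.
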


\begin{prop}\label{vanish1com}
The pairing $\llangle \beta, \alpha \rrangle_\Com$ vanishes whenever $\alpha \in J_n^\Com.$
\end{prop}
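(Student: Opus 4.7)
The plan is to argue exactly as in Proposition \ref{vanish1}: by linearity it suffices to check $\llangle G, \alpha \rrangle_\Com = 0$ for every $G \in \OG_n$ and every generator $\alpha$ of $J_n^\Com$. The symmetry, Jacobi, and mixed Jacobi combinations in $\Theta_n^\Com$ differ from their $\Theta_n$ counterparts only by the presence of dashed (even-level) edges attached above the affected colored subtree; these extra edges contribute nothing to the nadir of any path whose endpoints lie in the permuted subtrees, so the treatments of (S1), (S2), (J1), (J2), and (MJ) from the proof of Proposition \ref{vanish1} go through verbatim once one translates through the formal device of Lemma \ref{consist}.

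The genuinely new case is a derivation combination $\alpha = T_1 - T_2 - T_3$. I would fix the even-level vertex $v$ of $T_1$, its colored child $c$ whose right child is the even-level vertex $w$, the left subtree $L$ of $c$, and the partition $B = \{B_1, \dots, B_p\}$, $C = \{C_1, \dots, C_q\}$ of the subtrees rooted at the children of $w$; let $V_U$ denote the remaining leaves (those in a sibling subtree of $c$ under $v$ or outside the subtree of $v$). The core step is an edge-by-edge comparison of nadirs. Organized by where the two endpoints of an edge sit, the bookkeeping shows: any edge with both endpoints in the same piece among $L, B_i, C_j, V_U$ has identical nadir in all three trees; an $L$-to-$B_i$ edge has nadir $c$ in $T_1$ and $T_2$ but the even-level vertex $v$ in $T_3$; an $L$-to-$C_j$ edge has nadir $c$ in $T_1$ and $T_3$ but $v$ in $T_2$; any edge between two different subtrees among $\{B_i, C_j\}$ has an even-level nadir (one of $w$, $w'$, $w''$, or $v$) in every $T_k$; and every edge meeting $V_U$ has the same nadir across the three trees.

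The conclusion then follows by cases. If $\llangle G, T_1\rrangle_\Com \ne 0$, every nadir is odd-level, so $G$ contains no cross-edge among the $\{B_i, C_j\}$, and the unique edge of $G$ mapped to $c$ is either of type $L$-to-$B_i$ (forcing $\llangle G, T_2\rrangle_\Com = \llangle G, T_1\rrangle_\Com$ and $\llangle G, T_3\rrangle_\Com = 0$) or of type $L$-to-$C_j$ (yielding the symmetric conclusion). If instead $\llangle G, T_1\rrangle_\Com = 0$, the same nadir table shows that a nonvanishing $\llangle G, T_2\rrangle_\Com$ or $\llangle G, T_3\rrangle_\Com$ would translate back into a nonvanishing $\llangle G, T_1\rrangle_\Com$, a contradiction. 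In every subcase $\llangle G, T_1 - T_2 - T_3\rrangle_\Com = 0$.

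The main obstacle I anticipate is not the nadir bookkeeping itself but verifying that the orientation sign $\tau_{G,T_k} = (-1)^N$ is preserved under the identifications between pairings. Since the left and right children of $c$ play the same roles in $T_1$ and $T_2$ (respectively $T_3$), and every other odd-level nadir sits inside a subtree whose internal structure is unchanged across the three trees, the counterclockwise count of each contributing path cannot change when one passes from $T_k$ to $T_\ell$; making this sign invariance explicit at the modified vertex $c$ is the one place where a direct argument replaces an appeal to the nadir table.
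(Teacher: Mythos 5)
Your proposal is correct and follows exactly the route the paper intends: the paper omits this proof, stating only that it is analogous to that of Proposition \ref{vanish1}, and your argument carries out that analogy, with the nadir-by-nadir case analysis for a derivation combination $T_1 - T_2 - T_3$ (the unique edge of $G$ mapped to the colored vertex $c$ is $L$-to-$B$ or $L$-to-$C$, forcing one of $\llangle G,T_2\rrangle_\Com$, $\llangle G,T_3\rrangle_\Com$ to equal $\llangle G,T_1\rrangle_\Com$ and the other to vanish) being precisely the new content that needs checking. The sign verification at $c$ that you flag is indeed the only delicate point, and your observation that the left/right assignments at every odd-level nadir are unchanged across $T_1,T_2,T_3$ settles it.
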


We omit the proof of this Proposition, which can be shown analogously to the proof of Proposition \ref{vanish1}.

We next define a space $\sQ_2(n)$ corresponding to $\sP_2(n),$ and then show that $\llangle \ , \ \rrangle_\Com$ is a perfect paring between $\sP_2(n)$ and $\sQ_2(n).$
\begin{defn}
Let $I_n^\Com \subset \Gamma_n$ be the submodule generated by symmetry combinations, Jacobi combinations and mixed Jacobi combinations (defined in Definition \ref{ocomb}), as well as the graphs with more than one edge between two vertices. Let $$\sQ_2(n) := \Gamma_n/I_n^\Com.$$
\end{defn}

Note that $I_n^\Com$ is a submodule of $I_n.$ The difference between them is that $I_n$ contains disconnected graphs. Hence, $\Eil_2(n)$ is a submodule of $\sQ_2(n).$

We have the following proposition and lemma on $I_n^\Com$ and $\sQ_2(n).$ We omit the proofs of them, which are very similar to those of Proposition \ref{vanish2} and Lemma \ref{cyclic}.
\begin{prop}\label{vanish2com}
The pairing $\llangle \beta, \alpha \rrangle_\Com$ vanishes whenever $\beta \in I_n^\Com.$
\end{prop}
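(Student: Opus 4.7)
The plan is to mimic the proof of Proposition \ref{vanish2} virtually verbatim, using bilinearity to reduce to the case $\alpha = T \in \QBT_n$ and $\beta$ ranging over the generators of $I_n^\Com$. Note that compared with $I_n$, the submodule $I_n^\Com$ drops the disconnected-graph generators but keeps the multi-edge graphs and the symmetry, Jacobi, and mixed Jacobi combinations of Definition \ref{ocomb}. So I would just step through the remaining three (really four) types of generators.

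If $\beta = G$ has two or more edges joining the same pair of vertices $i,j$, then every such edge is sent by $\beta_{G,T}$ to the same nadir of the unique path from $i$ to $j$ in $T$, so $\beta_{G,T}$ cannot be injective and $\llangle G,T\rrangle_\Com = 0$. If $\beta = G_1 + G_2$ is a symmetry combination, $G_1$ and $G_2$ differ only in the orientation of a single edge $e$; reversing $e$ does not affect the image of $\beta_{G_i,T}$ or the color of the nadir, but flips the parity of the count of counterclockwise crossings by exactly one. Hence the two pairings are either both zero or negatives of each other, and they cancel.

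For a Jacobi combination $\beta = G_1+G_2+G_3$, say of type (J1) in $\Gamma_n$, the three summands agree outside of a ``triangle'' whose edges lie over the three pairs $\{i,j\}, \{j,k\}, \{k,i\}$. Let $v_{i,j}, v_{j,k}, v_{k,i}$ denote the nadirs of the three corresponding paths in $T$. A standard tree lemma forces at least two of these nadirs to coincide; say $v_{i,j} = v_{j,k}$. Then in whichever of the three graphs both of the edges $i\to j$ and $j\to k$ occur simultaneously, $\beta_{G_\ell,T}$ fails to be injective and contributes $0$, while the remaining two summands pair to values of equal magnitude and opposite sign once one keeps careful track of the nadir orientation. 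A mixed Jacobi combination is handled by running the same nadir-coincidence argument on the two superimposed copies of a (J1)-type combination.

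The only point of caution in transporting the proof from the binary setting of Proposition \ref{vanish2} to the quasi-binary setting is that Definition \ref{pairing2} additionally requires each nadir $\beta_{G,T}(e)$ to be an \emph{odd-level} colored vertex of $T$ whose color matches that of $e$. These extra conditions are preserved by reversing an edge orientation (color and level of the nadir are unchanged) and by cyclically rotating $i,j,k$ on a triangle (the nadirs, and a fortiori their levels and colors, are the same combinatorial data), so they do not disturb the sign-cancellation. No new ideas beyond those already used for Proposition \ref{vanish2} are required, which is presumably why the authors choose to omit the detailed write-up.
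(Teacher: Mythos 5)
Your proposal is correct and matches the paper's intent exactly: the paper omits this proof, stating only that it is "very similar to" that of Proposition \ref{vanish2}, and your argument is precisely that transfer, with the right observation that dropping the disconnected-graph generators is what makes the remaining cases go through and that the extra odd-level/color conditions of Definition \ref{pairing2} are constant across the summands of each combination and so do not disturb the cancellations.
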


\begin{lem}\label{cycliccom}
For any oriented two-colored graph $G$, we have $G = 0$ in $\sQ_2(n)$ unless $G$ is a forest of trees on $X.$
\end{lem}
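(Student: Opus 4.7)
The plan is to mirror the proof of Lemma \ref{cyclic}, observing that the cycle-killing argument there never relied on disconnected graphs being zero. The only elements of $I_n$ used in the proof of Lemma \ref{cyclic} to eliminate graphs whose underlying graph contains a cycle were: (a) graphs with multiple edges between two vertices, (b) symmetry combinations, (c) Jacobi combinations, and (d) mixed Jacobi combinations. All four types of generators are also in $I_n^\Com$ by definition. Hence the same reduction argument goes through without modification in $\sQ_2(n)$, with the weaker conclusion that $G = 0$ only when $n_G$ contains a cycle, rather than whenever $G$ is disconnected.

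Concretely, I would proceed as follows. First I would reduce to the case where $n_G$ is connected but contains a cycle (when $n_G$ is a forest, there is nothing to prove, since forests are not required to be zero in $\sQ_2(n)$; disconnected non-forest $G$ can be handled componentwise once the connected case is settled, because cycles live in one connected component). Then I would run induction on the length $k$ of a chosen cycle $(i_1, i_2, \dots, i_k)$ in $n_G$. For $k = 2$, the graph has two edges between $i_1$ and $i_2$, which is a multi-edge generator of $I_n^\Com$, so $G = 0$ in $\sQ_2(n)$. For the inductive step, after using a symmetry combination in $\Gamma_n$ to reorient all cycle edges as $i_1 \to i_2 \to \cdots \to i_k \to i_1$, I would split into two subcases exactly as in Lemma \ref{cyclic}: if two consecutive cycle edges share a color, apply a Jacobi combination at that corner to rewrite $G$ as a signed sum of two graphs whose underlying cycles have length strictly less than $k$; if consecutive edges always alternate in color, apply a mixed Jacobi combination at a corner, which rewrites $G$ as a signed sum of five graphs, each of which either has a strictly shorter cycle or has a cycle of length $k$ that now contains two consecutive same-colored edges. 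In both cases, the induction hypothesis (together with the already-proved same-color case) yields $G = 0$ in $\sQ_2(n)$.

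The step most worth thinking carefully about is the bookkeeping in the mixed Jacobi reduction: one must verify that in each of the five summands produced, either the underlying cycle length strictly decreases or we land in the ``two consecutive same-colored edges'' subcase, so that the induction is well-founded. This is precisely the verification Liu sketches in the proof of Lemma \ref{cyclic}, and no new ingredient is required here. Finally, to dispose of disconnected $G$ that are not forests, I would observe that at least one connected component of $n_G$ contains a cycle; applying the connected case to the graph formed by that component (viewed as sitting inside $G$) is not literally what is needed, but the same Jacobi/mixed Jacobi moves can be performed on $G$ itself since those local generators only modify edges within the cycle. So the whole argument reduces essentially verbatim to the cycle-killing portion of the proof of Lemma \ref{cyclic}, with ``$\Eil_2(n)$'' replaced by ``$\sQ_2(n)$'' throughout.
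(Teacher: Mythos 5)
Your proposal is correct and follows exactly the route the paper intends: the paper omits the proof of Lemma \ref{cycliccom}, stating only that it is ``very similar to'' that of Lemma \ref{cyclic}, and your observation that the cycle-killing induction there uses only multi-edge generators, symmetry, Jacobi, and mixed Jacobi combinations --- all of which lie in $I_n^\Com$ --- is precisely the point, with the connectivity hypothesis correctly dropped since $I_n^\Com$ omits disconnected graphs.
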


\begin{rem}
It is not true that $\llangle \beta, \alpha \rrangle_\Com$ vanishes whenever $\beta \in I_n,$ because when $\alpha = T \in \QBT_n \setminus \BT_n$ and $\beta = G$ is a disconnected graph in $\OG_n,$ the map $\beta_{G,T}$ could give a bijection between  \{edges of $G$\} and \{internal odd-level vertices of $T$\}. For example, if $T$ is the tree in Figure \ref{exmncom} and $G$ is the graph on vertex $\{x_1, \dots, x_7\}$ with a red edge $x_3 \to x_6$ and a blue edge $x_5 \to x_7,$ then $\llangle G, T\rrangle_\Com = 1.$

\end{rem}

By Propositions \ref{vanish1com} and \ref{vanish2com}, we can pass from the pairing  $\llangle , \rrangle_\Com$ between $\Theta_n^\Com$ and $\Gamma_n$ to a pairing between $\sP_2(n)$ and $\sQ_2(n).$ 
We still use the same notation  $\llangle , \rrangle_\Com$ to denote the pairing. 
\begin{prop}\label{com2}
Fix the alphabet $X.$ 
Suppose for any subset $Y$ of $X,$ we have a basis $\sO(Y)$ for $\Eil_2(|Y|)$ on the alphabet $Y$.
We define $\sO_n^{\Com}(X)$ to be the set of elements $o_{1} o_{2} \cdots o_{k}$, where each $o_i$ is in the basis $\sO(X_i)$ for $\Eil_2(|X_i|)$ (on the alphabet $X_i$),  and $\bigcup_{i=1}^k X_i$ is a partition of $X$ with $\max(X_1) < \cdots < \max(X_k).$ Then
$\sO_n^{\Com}(X)$ is a basis for $\sQ_2(n).$
\end{prop}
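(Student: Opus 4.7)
The plan is to construct an isomorphism
\[
\phi: \bigoplus_\pi \bigotimes_{s=1}^{k} \Eil_2(|X_s|) \to \sQ_2(n),
\]
where the direct sum ranges over ordered set partitions $\pi = (X_1,\ldots,X_k)$ of $X$ with $\max X_1 < \cdots < \max X_k$, defined on pure tensors by disjoint union of forests: $\phi([F_1]\otimes\cdots\otimes[F_k]) = [F_1 \sqcup \cdots \sqcup F_k]$. The tensor-product basis $\bigsqcup_\pi \bigotimes_s \sO(X_s)$ pushes forward to exactly $\sO_n^\Com(X)$, so establishing that $\phi$ is an isomorphism proves the proposition.

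Surjectivity of $\phi$ is immediate from Lemma \ref{cycliccom}: $\sQ_2(n)$ is spanned by classes of oriented $2$-colored forests on $X$, and each such forest, decomposed by connected components and reordered by $\max$, lies in the image of $\phi$ on the corresponding $\pi$-summand. Well-definedness requires that a relation $F_s \equiv F_s'$ in $\Eil_2(|X_s|)$ lifts, when the other components are attached by disjoint union, to a relation in $\sQ_2(n)$. This reduces to checking that the relevant generators of $I_{|X_s|}$---symmetry, Jacobi, mixed Jacobi, and multi-edge---extend to generators of $I_n^\Com$ under disjoint union, which they do because all four types are local operations on one component. Crucially, the disconnected-graph generator of $I_{|X_s|}$ is never needed to derive tree-to-tree relations in $\Eil_2(|X_s|)$: a Jacobi-type rotation applied to a connected tree produces either another tree or a graph with a parallel edge (killed by the multi-edge relation), because removing one edge of a tree disconnects it into exactly two pieces which the inserted rotated edge reconnects.

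For injectivity I would use the pairing $\llangle\cdot,\cdot\rrangle_\Com$. By Proposition \ref{com}, $\sP_2(n)$ decomposes as $\bigoplus_\pi \bigotimes_s \Lie_2(|X_s|)$ with basis $\sB_n^\Com(X)$ for any fixed choice of Lie-bases $\sB(X_s)$; correspondingly $\Hom(\sP_2(n), R) = \bigoplus_\pi \Hom(\bigotimes_s \Lie_2(|X_s|), R)$. I would show that the composition $\llangle\cdot,\cdot\rrangle_\Com \circ \phi$ factors as a direct sum over $\pi$ of tensor products of the pairings $\Eil_2(|X_s|) \times \Lie_2(|X_s|) \to R$ from Theorem \ref{main3}; each of those is perfect between free modules of finite rank, so their tensor products are perfect, and the direct sum is therefore an isomorphism, forcing $\phi$ to be injective. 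The factorization rests on the following calculation: for $\vec{o} = o_1 \sqcup \cdots \sqcup o_\ell$ on partition $(X_j')$ and $\vec{b} = b_1 \cdots b_k$ on partition $(X_s)$, the quasi-binary tree $T$ of $\vec{b}$ has an even-level root with $k$ children heading the $b_s$-subtrees; its internal odd-level vertices---totaling $n-k$---are precisely the internal vertices of those subtrees; edge counting in $\vec{o}$ forces $\ell = k$; any edge of $\vec{o}$ joining two different $X_s$'s would have its nadir at the even-level root and fail color-preservation, so each component $o_j$ sits inside some $X_{s(j)}$; the cardinality constraint $\sum|X_j'| = \sum|X_s| = n$ with $\ell = k$ then forces $X_j' = X_j$ by the $\max$-ordering; and within matched pairs $\beta_{\vec{o}, T}$ restricts to $\beta_{o_s, T_s}$ with signs multiplying, yielding $\llangle \vec{o}, \vec{b}\rrangle_\Com = \prod_s \llangle o_s, b_s\rrangle$.

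The main obstacle will be the well-definedness step---rigorously verifying that the disconnected-graph generator of $I_{|X_s|}$ is dispensable for deriving tree-to-tree equalities in $\Eil_2(|X_s|)$---so that local relations on trees in one component lift cleanly to relations in $I_n^\Com$ when other components are attached.
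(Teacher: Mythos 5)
Your proposal is correct and follows essentially the same route as the paper: spanning via Lemma \ref{cycliccom} together with the observation that tree-to-tree relations in $\Eil_2(|X_i|)$ are generated without the disconnected-graph generator (the paper's Lemma \ref{generate}), and independence via the block-diagonal factorization of $\llangle\cdot,\cdot\rrangle_\Com$ over partitions into tensor (Kronecker) products of the perfect pairings of Theorem \ref{main3} (the paper's Lemma \ref{vanish3com} and Proposition \ref{nonsingcom}). The only cosmetic difference is that you phrase the argument as an explicit isomorphism $\phi$ from a direct sum of tensor products rather than as a nonsingular block-diagonal matrix.
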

\begin{rem}
Note that each $o_{i} \neq 0 \in \Eil_2(|X_i|).$ Thus, by Lemma \ref{cyclic}, $o_{i}$ is a linear combination of trees on $X_i.$ Suppose for each $i,$ we have $o_i = \sum c_{i,j} G_{i,j}$ for some $c_{i,j}$ in $R,$ where $G_{i,j}$'s are trees on $X_i.$ In the definition of $\sO_n^{\Com}(X)$ in the proposition, by ``element $o_{1} o_{2} \cdots o_{k}$'',  we mean the element $$\sum_{j_1, \dots, j_k} (\prod_{i=1}^k c_{i, j_i}) \times (\mbox{ graph with } k \mbox{ components } G_{1, j_1}, \dots, G_{k, j_k})$$  in $\Gamma_n.$
\end{rem}

The proof of this proposition takes the remainder of this section. 

\begin{lem}\label{generate}
If $\sum_{j=1}^m a_j G_j = 0$ in $\Eil_2(n)$, and for each $j,$ $a_j \neq 0$ and $G_j$ is a tree on $X,$ then $\sum_{j=1}^m a_j G_j$ is generated by symmetry combinations, Jacobi combinations and mixed Jacobi combinations (defined in Definition \ref{ocomb}). 
\end{lem}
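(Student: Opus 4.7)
The plan is to prove the stronger statement that $V \cap I_n \subseteq I_n^{\mathrm{tree}}$, where $V \subset \Gamma_n$ is the free submodule generated by the oriented two-colored trees on $X$, and $I_n^{\mathrm{tree}} \subset V$ is the submodule generated by those symmetry, Jacobi, and mixed Jacobi combinations of Definition \ref{ocomb} whose every constituent graph is a tree. Since $I_n^{\mathrm{tree}}$ is obviously contained in the submodule $I_n'$ of $\Gamma_n$ generated by all symmetry, Jacobi, and mixed Jacobi combinations, this stronger statement yields the lemma.

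The first step is to verify that each of the three types of combinations, when applied to a tree, produces a sum of trees. For a symmetry combination this is immediate, since only the orientation of one edge changes. For a Jacobi or mixed Jacobi combination applied to two adjacent edges of a tree $G$, removing those two edges breaks $G$ into three components; each of the alternative pairs of edges among the three distinguished vertices (as prescribed by the cyclic rotations) reconnects the same three components into a tree, so no cycle can arise.

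The second step is to revisit the proof of Lemma \ref{lcgraph} and observe that every symmetry, Jacobi, or mixed Jacobi combination invoked there acts on a two-edge subgraph of a tree, and therefore by the previous paragraph is a tree-only combination. Consequently, the entire inductive argument takes place inside $V/I_n^{\mathrm{tree}}$, showing that every tree $G \in V$ is, modulo $I_n^{\mathrm{tree}}$, an $R$-linear combination of elements of $\cO_n(X)$. This yields a surjection $\pi\colon R^{n^{n-1}} \twoheadrightarrow V/I_n^{\mathrm{tree}}$ sending the standard basis onto $\cO_n(X)$.

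Composing $\pi$ with the natural surjection $V/I_n^{\mathrm{tree}} \twoheadrightarrow V/(V \cap I_n) \cong \Eil_2(n)$ (the last identification coming from Lemma \ref{cyclic}) produces a surjection $R^{n^{n-1}} \twoheadrightarrow \Eil_2(n)$ sending the standard basis to the basis $\cO_n(X)$ of $\Eil_2(n)$ from Corollary \ref{eilrank}. This composition is thus an isomorphism, which forces both intermediate surjections to be isomorphisms and hence $V \cap I_n = I_n^{\mathrm{tree}} \subseteq I_n'$, as desired. The main obstacle is the bookkeeping in the first step: one must examine all six terms of a mixed Jacobi combination and track the signs arising from the symmetry reorientations used in the proof of Lemma \ref{lcgraph} to bring edges into the canonical $i \to j \to k$ form required by (J1), (J2), and (MJ).
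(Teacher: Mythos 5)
Your proof is correct, but it follows a genuinely different route from the paper's. The ingredient you share with the paper is your first step: a symmetry, Jacobi, or mixed Jacobi combination with one tree constituent has all constituents trees (any two of the three edges among $i,j,k$ reassemble the three components of $G$ minus the two distinguished edges into a tree). The paper uses only this fact, arguing directly by induction on the length of a presentation of $\sum_j a_j G_j$ as a sum of multiples of generators of $I_n$: since the element is supported on trees, the multi-edge and disconnected generators (which are non-trees) cannot occur, and each symmetry/Jacobi/mixed Jacobi generator that does occur is tree-supported. In effect $\Gamma_n$ splits as (trees) $\oplus$ (non-trees), every generator of $I_n$ lies wholly in one summand, so $I_n \cap V$ is generated by the tree-supported combinations; this is elementary and needs nothing beyond Definition \ref{ocomb}. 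You instead squeeze $V/I_n^{\mathrm{tree}}$ between $R^{n^{n-1}}$ and $\Eil_2(n)$, getting surjectivity of $\pi$ by re-auditing the proof of Lemma \ref{lcgraph} and injectivity from Corollary \ref{eilrank}. There is no circularity (Corollary \ref{eilrank} is fully established by the end of Section 8, before Section 10), but your route costs you the perfect-pairing machinery plus the check that the entire induction in Lemma \ref{lcgraph} --- in particular the bilinear expansion \eqref{recgraph}, which imports relations from sub-alphabets by attaching a fixed edge and a fixed subtree to every constituent --- really does take place modulo tree-supported combinations only; that check goes through, since attaching a tree by an edge to every graph of a tree-supported combination yields another tree-supported combination, and so the real bookkeeping lives in your second step rather than your first. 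The proposal stands; the paper's version is preferable only insofar as it keeps the lemma independent of the rank computation for $\Eil_2(n)$.
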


\begin{proof}

$\sum_{j=1}^m a_j G_j = 0$ implies that $\sum_{j=1}^m a_j G_j \in I_n$ is generated by five possible relations: symmetry combinations, Jacobi combinations and mixed Jacobi combinations, graphs with more than one edge between two vertices, and disconnected graphs. Therefore, we can find a sequence of elements in $I_n:$ 
$$\sum_{j=1}^m a_j G_j = \sum_{j=1}^{m_1} a_{1,j} G_{1,j} \to  \sum_{j=1}^{m_2} a_{2,j} G_{2,j} \to \cdots \to  \sum_{j=1}^{m_\ell} a_{\ell,j} G_{\ell,j} \to 0,$$ such that each element in the sequence is obtained by applying one of the five relations to the previous element and then possibly canceling out some graphs. We will prove the lemma by induction on $\ell.$

If $\ell = 1,$ then $\sum_{j=1}^m a_j G_j$ is one of the five relations (up to a scalar). Since all of $G_1, \dots, G_m$ are trees on $X,$ they cannot be graphs with more than one edge between two vertices or disconnected graphs. 

Assuming the proposition holds for $\ell < \ell_0,$ for some $\ell_0 \ge 2,$ we consider the case $\ell = \ell_0.$ Since all of $G_1, \dots, G_m$ are trees on $X,$ we can only apply a symmetry combination, a Jacobi combination or a mixed Jacobi combination on $\sum_{j=1}^m a_j G_j$ to obtain $\sum_{j=1}^{m_2} a_{2,j} G_{2,j}.$ One checks, in any of these three kinds of combinations, if one of the involved graphs is a tree, then the rest are trees as well. Therefore, all of the $G_{2,j}$'s are trees on $X.$ By the induction hypothesis, $\sum_{j=1}^{m_2} a_{2,j} G_{2,j}$ is  generated by symmetry combinations, Jacobi combinations and mixed Jacobi combinations. The desired result follows.

\end{proof}

\begin{prop}\label{sqspan} $\sO_n^\Com(X)$ spans $\sQ_2(n).$
\end{prop}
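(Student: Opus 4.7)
The plan is to reduce to forests of trees via Lemma \ref{cycliccom}, expand each connected component using the given basis of $\Eil_2$ on the relevant sub-alphabet, and then use Lemma \ref{generate} to guarantee that the resulting relations actually lie in $I_n^\Com$ (rather than only in the larger $I_n$).

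First I would observe that, by Lemma \ref{cycliccom}, the classes of forests of trees on $X$ already span $\sQ_2(n)$, so it suffices to express an arbitrary forest $G$ as a linear combination of elements of $\sO_n^\Com(X)$ modulo $I_n^\Com$. Let $G$ have connected components $G_1,\dots,G_k$ on the alphabets $X_1,\dots,X_k$ respectively, where $\bigcup_i X_i$ is a partition of $X$ chosen so that $\max(X_1)<\cdots<\max(X_k)$.

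Next, for each $i$ the tree $G_i$ determines a class in $\Eil_2(|X_i|)$, and since $\sO(X_i)$ is a basis of $\Eil_2(|X_i|)$ we may write
\[
G_i \;=\; \sum_{j} c_{i,j}\, o_{i,j} \quad\text{in }\Eil_2(|X_i|),
\]
with $o_{i,j}\in\sO(X_i)$. By the remark following Proposition \ref{com2}, each $o_{i,j}$ is a linear combination of trees on $X_i$, so both sides of the displayed equation are linear combinations of trees on $X_i$. Therefore Lemma \ref{generate} applies (to the alphabet $X_i$): the relation $G_i - \sum_{j} c_{i,j} o_{i,j}$ is generated inside $\Gamma_{|X_i|}$ by symmetry, Jacobi, and mixed Jacobi combinations. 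Adjoining the remaining components of $G$ (as a fixed disjoint part) to each generator of this relation produces a symmetry, Jacobi, or mixed Jacobi combination in $\Gamma_n$ that involves the same local subgraph, hence lies in $I_n^\Com$.

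Iterating this substitution, replacing $G_1, G_2,\dots,G_k$ in turn, I obtain
\[
G \;\equiv\; \sum_{j_1,\dots,j_k} \Bigl(\prod_{i=1}^{k} c_{i,j_i}\Bigr)\, o_{1,j_1}\,o_{2,j_2}\cdots o_{k,j_k} \pmod{I_n^\Com},
\]
which exhibits $G$ as a linear combination of elements of $\sO_n^\Com(X)$ in $\sQ_2(n)$, as required. The main obstacle is precisely the step that forces us to invoke Lemma \ref{generate}: when expanding $G_i$ in the basis of $\Eil_2(|X_i|)$, the relation used is a priori only an equality modulo $I_{|X_i|}$, which allows one to kill disconnected graphs; but disconnected graphs are \emph{not} identified with zero in $\sQ_2(n)$, so without Lemma \ref{generate} the equality would not lift to an identity modulo $I_n^\Com$. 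Once that lemma is in hand, everything else is bookkeeping about disjoint unions commuting with the local generators of $I_n^\Com$.
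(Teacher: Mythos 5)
Your proof is correct and follows essentially the same route as the paper: reduce to forests via Lemma \ref{cycliccom}, expand each component in the basis $\sO(X_i)$, and invoke Lemma \ref{generate} to ensure the resulting relations are generated only by symmetry, Jacobi, and mixed Jacobi combinations and hence lie in $I_n^\Com$. You have also correctly identified the one subtle point — that equality in $\Eil_2(|X_i|)$ a priori only gives membership in $I_{|X_i|}$, which is too large — which is precisely why the paper proves Lemma \ref{generate}.
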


\begin{proof}
For any nonzero two-colored oriented graph $G \in \sQ_2(n),$ by Lemma \ref{cycliccom}, $G$ is a forest of trees on $X,$ that is, there exists  a partition of $X = \bigcup_{i=1}^k X_i$ with $\max(X_1) < \cdots < \max(X_k)$ such that $G$ has $k$ connected components $G_1, \dots, G_k,$ where $G_i \in \OG_{|X_i|}$ is an oriented two-colored tree on $X_i,$ for each $i.$

For each $i,$ because $\sO(X_i)$ is a basis for $\Eil_2(|X_i|),$ we can write $G_i$ as a linear combination of elements in $\sO(X_i)$. Hence, for some $a_{i,j} \in R,$ we have that $$G_i = \sum_{o_{i,j} \in \sO(X_i)} a_{i,j} o_{i,j} \ \mbox{ in $\Eil_2(|X_i|).$}$$  
Note that each 
$o_{i,j}$ is a linear combination of trees on $X_i.$ 
By Lemma \ref{generate}, $G_i - \sum a_{i,j} o_{i,j}$ is generated by symmetry combinations, Jacobi combinations and mixed Jacobi combinations (defined in Definition \ref{ocomb}).

Therefore, $$G - \sum_{j_1, \dots, j_k} (\prod_{i=1}^k a_{i, j_i}) \times (\mbox{ element } o_{1, j_1}, \dots, o_{k, j_k} )$$
is generated by symmetry combinations, Jacobi combinations and mixed Jacobi combinations, and thus is an element in $I_n^\Com.$ Hence, $G$ can be written as a linear combination of 
elements 
 $o_{1} o_{2} \cdots o_{k}$ in $\sQ_2(n),$ where each $o_i$ is in the basis $\sO(X_i).$
\end{proof}

Now we have basis candidates for both $\sP_2(n)$ and $\sQ_2(n).$ So it is enough to show that the matrix of the pairing $\llangle , \rrangle_\Com$ between $\sB^\Com(X)$ and $\sO^\Com(X)$ is nonsingular.

\begin{lem}\label{vanish3com}
Suppose $\alpha \in \sB^\Com(X)$ is the element in $\Theta_n^\Com$ corresponding to a product $b_{1} b_{2} \cdots b_{k},$ where $\bigcup_{i=1}^k X_i$ is a partition of $X$ and each $b_i$ is in the basis $\sB(X_i)$ for $\Lie_2(|X_i|),$ and  $\beta \in \sO^\Com(X)$ is an element $o_{1} o_{2} \cdots o_{k'},$ where $\bigcup_{i=1}^{k'} X_{i}'$ is a partition of $X$ and each $o_i$ is in the basis $\sO(X_i').$

 $\llangle  \beta, \alpha \rrangle_\Com = 0$ unless $k = k',$ and $\bigcup_{i=1}^k X_i$ and $\bigcup_{i=1}^{k'} X_{i}'$ are the same partition. 
 

\end{lem}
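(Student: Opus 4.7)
The plan is to exploit the color-preserving bijection condition built into Definition~\ref{pairing2}. First I would pin down the structure of the $2$v-colored quasi-binary tree $T_\alpha \in \QBT_n$ underlying $\alpha = b_1 b_2 \cdots b_k$: its root is a single even-level vertex whose $k$ children are the roots of the binary trees in $\BT_{|X_i|}$ corresponding to the Lie monomials $b_i$ (reducing to a leaf when $b_i$ is a single letter). Consequently, the internal odd-level vertices of $T_\alpha$ are exactly the internal vertices of the constituent binary trees, giving a total of $\sum_{i=1}^k (|X_i|-1) = n-k$. On the other side, by Lemma~\ref{cyclic}, each $o_j$ representing a basis element of $\Eil_2(|X_j'|)$ is a linear combination of trees on $X_j'$, so every graph $G$ appearing with nonzero coefficient in $\beta = o_1 \cdots o_{k'}$ is a spanning forest on $X$ whose components are trees on $X_1', \dots, X_{k'}'$, and in particular has $n-k'$ edges.

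The key observation to invoke is that if some edge $e$ of $G$ has endpoints in two distinct blocks $X_i \neq X_{i'}$, then the unique path $p_{T_\alpha}(e)$ in $T_\alpha$ is forced to pass through the root of $T_\alpha$, so the nadir of this path is that root. Since the root is even-level and carries no color, $\beta_{G,T_\alpha}$ fails to be color-preserving, forcing $\llangle G, T_\alpha \rrangle_\Com = 0$. By bilinearity of the pairing, $\llangle \beta, \alpha \rrangle_\Com \neq 0$ requires at least one $G$ appearing in $\beta$ such that every edge of $G$ lies inside a single $X_i$; equivalently, the component partition $\{X_j'\}$ refines the block partition $\{X_i\}$.

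To finish, the remaining bijection requirement in Definition~\ref{pairing2} forces the edge count of $G$ to match the number of internal odd-level vertices of $T_\alpha$, i.e.\ $n-k' = n-k$, so $k=k'$. Since a refinement of two partitions with the same number of blocks must be equality, we conclude $\{X_j'\} = \{X_i\}$ as unordered set partitions, as required. The only point requiring mild care is the degenerate situation in which some $b_i$ or some $o_j$ is a single letter: the corresponding subtree of $T_\alpha$ is then a bare leaf and the corresponding component of $G$ is a bare vertex, but both sides of the counting argument accommodate this ($|X_i|-1=0$ internal odd-level vertices and $0$ edges), and the ``path through the root'' observation is unaffected. The main obstacle is really only notational bookkeeping; the underlying combinatorial mechanism is the level parity of the nadir.
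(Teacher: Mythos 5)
Your proposal is correct and follows essentially the same route as the paper: both arguments match the $n-k'$ edges of a forest $G$ against the $n-k$ internal odd-level vertices of $T_\alpha$ to force $k=k'$, and both kill cross-block edges by observing that the nadir of such an edge's path is the even-level (uncolored) root, so the pairing cannot be color-preserving. The only cosmetic difference is that you count the internal odd-level vertices directly as $\sum_i(|X_i|-1)$ rather than via the paper's identity $n-1=\sum_v(-1+\#\,\mbox{children of }v)$.
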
 
\begin{rem}
For any monomials $m_1, \dots, m_k$ in $M_n^\Com,$ suppose the 2v-colored quasi-binary tree corresponding to $m_i$ is $T_i,$ for each $i.$ Then the 2v-colored quasi-binary tree corresponds to the product $m_1 m_2 \cdots m_k$ is the tree obtained by gluing the roots of all the $T_i$'s together. 
\end{rem}
\begin{proof}
Suppose $b_i = \sum c_{i,j} m_{i,j},$ where each $m_{i,j}$ is a monomial in $\Lie_2(|X_i|)$, and $o_i = \sum c_{i,j}' G_{i,j},$ where each $G_{i,j}$ is a tree on $X_i'.$ We denote by $G_{j_1, \dots, j_{k'}}$ the graph with $k'$ components $G_{1,j_1}, \dots, G_{k', j_{k'}}$. Then
\begin{eqnarray*}
& &\llangle  G, T \rrangle_\Com \\
&=& \llangle  \sum_{j_1, \dots, j_{k'}} (\prod_{i=1}^{k'} c_{i, j_i}') \times G_{j_1, \dots, j_{k'}}\ , \ \sum_{j_1', \dots, j_k'} (\prod_{i=1}^k c_{i, j_i'}) \times (m_{1,j_1'} \cdots m_{k, j_k'}) \rrangle_\Com \\
&=&  \sum_{j_1, \dots, j_{k'}} \sum_{j_1', \dots, j_k'} (\prod_{i=1}^{k'} c_{i, j_i}')(\prod_{i=1}^k c_{i, j_i'}) \llangle  G_{j_1, \dots, j_{k'}} \ , \   m_{1,j_1'} \cdots m_{k, j_k'} \rrangle_\Com
\end{eqnarray*}

Note that $\llangle  \beta, \alpha \rrangle_\Com \neq 0$ implies that one of the $\llangle  G_{j_1, \dots, j_{k'}} \ , \   m_{1,j_1'} \cdots m_{k, j_k'} \rrangle_\Com$ is not zero. Therefore, it is sufficient to check the case when $\alpha = T$ is the tree in $\QBT_n$ corresponding to a product $b_{1} b_{2} \cdots b_{k},$ where each $b_i$ is a monomial in $\Lie_2(|X_i|),$ and $\beta = G \in \OG_n$ is a graph with $k'$ components $o_1 \cdots o_{k'},$ where each $o_i$ is a tree on $X_i'.$

If $\llangle  G, T \rrangle_\Com \neq 0$, then $\beta_{G,T}$ gives a bijection between  \{edges of $G$\} and \{internal odd-level vertices of $T$\}.
Since each $o_i$ is a tree on $X_i',$ the number of edges in $G$ is $n-k'.$  Thus, the number of internal odd-level vertices of $T$ is $n-k'.$ However, for $T,$ we have
$$n - 1 =  \sum_{v: \mbox{ an internal vertex of $T$}} (-1 + \# \mbox{ children of } v).$$

Each internal odd-level vertex of $T$ has exactly two children. Because each $b_i$ is a monomial in $\Lie_2(|X_i|),$ all the even-level vertices of the 2v-colored quasi-binary tree corresponding to $b_i$ has exactly one child. Therefore, any non-root even-level vertex of $T$ has exactly one child. Hence, we have
$$n - 1 = (\# \mbox{ internal odd-level vertices of $T$} ) + (-1 + \# \mbox{ children of the root of $T$} ).$$
The number of children of the root of $T$ is $k.$ Therefore, the number of internal odd-level vertices of $T$ is $n-k.$ So $k = k'.$

For any $x, y \in X,$ if $x \in X_i$ and $y \in X_j$ with $i \neq j,$ then $e = x \to y$ or $y \to x$ is not an edge in $G,$ because otherwise the nadir of $p_T(e)$ is the root of $T.$ Hence, $e$ is an edge of $G$ only when the two ends of $e$ are in the same set $X_i,$ for some $i.$ Therefore, each $X_i'$ has to be a subset of $X_{j_i}$ for some $j_i.$ Given $k = k',$ we must have that $\bigcup_{i=1}^k X_i$ and $\bigcup_{i=1}^{k'} X_{i}'$ are the same partition. 

\end{proof}

Lemma \ref{vanish3com} implies that if we choose a proper order, the matrix of the pairing $\llangle , \rrangle_\Com$ between $\sB^\Com(X)$ and $\sO^\Com(X)$ is a block diagonal matrix, where the blocks on the diagonal correspond to all the partitions $\bigcup_{i=1}^k X_i$ of $X$ with $\max(X_1) < \cdots < \max(X_k).$

\begin{prop}\label{nonsingcom}
Suppose $par := \bigcup_{i=1}^k X_i$ is a partition of $X$ with $\max(X_1) < \cdots < \max(X_k),$ let $\sB_{par}^\Com(X)$ and $\sO_{par}^\Com(X)$ be the subsets of $\sB^\Com(X)$ and  $\sO^\Com(X)$ respectively, corresponding to this partition. (Note that it is easy to verify that $\sB_{par}^\Com(X)$ and $\sO_{par}^\Com(X)$ have the same cardinality.) Then the matrix of the pairing $\llangle , \rrangle_\Com$ between $\sB_{par}^\Com(X)$ and $\sO_{par}^\Com(X)$ is nonsingular.

\end{prop}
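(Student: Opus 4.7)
The strategy is to show that $\cM_{par}$, the matrix of the pairing $\llangle\, ,\rrangle_\Com$ between $\sB_{par}^\Com(X)$ and $\sO_{par}^\Com(X)$, is (up to reordering rows and columns) a Kronecker product of the matrices $M_i$ for the ordinary pairings $\llangle\, ,\rrangle$ between the bases $\sO(X_i)$ and $\sB(X_i)$ of $\Eil_2(|X_i|)$ and $\Lie_2(|X_i|)$. Once this reduction is in place, Theorem \ref{main3} tells us that each $M_i$ has determinant a unit in $R$, so the Kronecker product has unit determinant and $\cM_{par}$ is nonsingular.

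The central technical step is the following multiplicativity formula: if $G = G_1 \sqcup \cdots \sqcup G_k$ is a disjoint union of oriented two-colored trees with $G_i \in \OG_{|X_i|}$ on the alphabet $X_i$, and if $T \in \QBT_n$ is the quasi-binary tree corresponding to a product $m_1 m_2 \cdots m_k$ of monomials with $m_i \in M_{|X_i|}$, then
$$\llangle G, T \rrangle_\Com \;=\; \prod_{i=1}^{k} \llangle G_i, T_i \rrangle,$$
where $T_i \in \BT_{|X_i|}$ is the 2v-colored binary tree corresponding to $m_i$ (and the right-hand pairings agree with the $\Com$ versions by Lemma \ref{consist}). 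To verify this, observe that the root of $T$ is an even-level vertex whose children root the subtrees $T_1,\dots,T_k$, so that the leaves of $T_i$ are exactly $X_i$. Any edge $e$ of $G$ lies in a unique $G_i$, and since both endpoints of $e$ lie in $X_i$, the path $p_T(e)$ is entirely contained in $T_i$ and has the same nadir as $p_{T_i}(e)$. Therefore $\beta_{G,T}$ is the disjoint union of the $\beta_{G_i,T_i}$, the conditions ``color-preserving'' and ``bijection onto the internal odd-level vertices'' decompose componentwise (using that no edge of $G$ can map to the even-level root of $T$), and the counterclockwise-at-nadir count that defines $\tau_{G,T}$ splits as the sum of the component counts.

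Extending bilinearly over $R$, this formula shows that if we order the rows of $\cM_{par}$ lexicographically by the components $(o_1,\dots,o_k)\in \sO(X_1)\times\cdots\times\sO(X_k)$ and the columns lexicographically by $(b_1,\dots,b_k)\in\sB(X_1)\times\cdots\times\sB(X_k)$, then the entry indexed by these tuples is $\prod_{i=1}^k \llangle o_i,b_i\rrangle$. Equivalently, $\cM_{par} = M_1 \otimes M_2 \otimes \cdots \otimes M_k$, where $M_i$ is the matrix of the pairing between $\sO(X_i)$ and $\sB(X_i)$. By Theorem \ref{main3}, each $M_i$ has determinant a unit in $R$, hence so does the Kronecker product, and $\cM_{par}$ is nonsingular.

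The main obstacle is a careful verification of the multiplicativity formula, especially the decomposition of the sign $\tau_{G,T}$; everything else is bookkeeping and a standard Kronecker-product determinant computation. The potentially delicate corner case is when some $m_i$ is a single letter (so $T_i$ is a single leaf contributing no internal odd-level vertex and $G_i$ must be the one-vertex graph with no edges), but in that situation both sides of the formula factor through the trivial pairing $\llangle x_j, x_j\rrangle = 1$ and no sign is introduced.
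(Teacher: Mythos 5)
Your proposal is correct and follows essentially the same route as the paper: the paper's proof also fixes lexicographic orders on $\sB_{par}^\Com(X)$ and $\sO_{par}^\Com(X)$, identifies the pairing matrix as the Kronecker product $\cM_1 \otimes \cdots \otimes \cM_k$ of the matrices of the ordinary pairings (which are nonsingular by Theorem \ref{main3} via Lemma \ref{consist}), and concludes by the nonsingularity of Kronecker products of nonsingular matrices. Your explicit verification of the multiplicativity formula for $\llangle G, T\rrangle_\Com$ is a welcome elaboration of the step the paper leaves as ``one can check.''
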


Before we prove Proposition \ref{nonsingcom}, we first review some basic results on {\it Kronecker products of matrices} \cite[Section 4.2]{horn-johnson}, which we will need in the proof. Recall the {\it Kronecker product} of an $n \times n$ matrix $A = (a_{i,j})$ and an $m \times m$ matrix $B$ is the $mn \times mn$ matrix
$$A \otimes B = \left(\begin{array}{ccc}a_{1,1} B & \cdots  & a_{1,n} B \\
\vdots & \ddots & \vdots \\
a_{n,1}B & \cdots & a_{n,n} B\end{array}\right).$$
The Kronecker product is bilinear and associative. We have the following lemma:
\begin{lem}\cite[Corollay 4.2.11]{horn-johnson}\label{kron}
If $A$ and $B$ are both nonsingular, then so is $A \otimes B$. 
\end{lem}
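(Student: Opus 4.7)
The plan is to prove the lemma by exhibiting an explicit two-sided inverse of $A \otimes B$, based on the multiplicative property
$$(A \otimes B)(C \otimes D) = (AC) \otimes (BD),$$
which holds whenever the products $AC$ and $BD$ are defined. Once this identity is established, the conclusion follows in one line.

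First, I would verify the multiplicative identity by direct block computation. Writing $A = (a_{i,j})$ as an $n \times n$ matrix and $C = (c_{i,j})$ similarly, the $(i,j)$-block of $(A \otimes B)(C \otimes D)$ is
$$\sum_{k=1}^{n} (a_{i,k} B)(c_{k,j} D) \;=\; \Bigl(\sum_{k=1}^{n} a_{i,k} c_{k,j}\Bigr) BD \;=\; (AC)_{i,j}\, BD,$$
which is precisely the $(i,j)$-block of $(AC) \otimes (BD)$. Bilinearity of the Kronecker product (already noted in the excerpt) is used implicitly to pull scalar factors through.

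Second, assuming $A$ and $B$ are nonsingular (so $A^{-1}$ and $B^{-1}$ exist over $R$), the identity gives
$$(A \otimes B)(A^{-1} \otimes B^{-1}) = (A A^{-1}) \otimes (B B^{-1}) = I_n \otimes I_m = I_{nm},$$
and symmetrically $(A^{-1} \otimes B^{-1})(A \otimes B) = I_{nm}$. Thus $A \otimes B$ admits the two-sided inverse $A^{-1} \otimes B^{-1}$ and is nonsingular.

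There is essentially no obstacle here beyond keeping the block indexing straight when verifying the multiplicative identity; after that step everything is formal. An alternative, equally short route would be to invoke the determinant formula $\det(A \otimes B) = (\det A)^m (\det B)^n$, but the constructive inverse approach has the advantage of working uniformly over any commutative ring $R$ without first developing a determinant theory for Kronecker products.
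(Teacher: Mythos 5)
Your proof is correct. The paper does not prove this lemma itself---it cites it to Horn and Johnson---and your argument via the mixed-product identity $(A\otimes B)(C\otimes D)=(AC)\otimes(BD)$, yielding the explicit inverse $A^{-1}\otimes B^{-1}$, is exactly the standard proof given in that reference. Your closing remark is also apt: since the paper's matrices are upper triangular with units on the diagonal, ``nonsingular'' here means invertible over the commutative ring $R$, so the constructive-inverse route is indeed the right one (the determinant alternative would also work, since $\det$ behaves as expected over any commutative ring, but it is unnecessary).
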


\begin{proof}[Proof of Proposition \ref{nonsingcom}]
It is enough to give certain orders on the elements of $\sB_{par}^\Com(X)$ and $\sO_{par}^\Com(X)$, and show the matrix of the pairing $\llangle , \rrangle_\Com$ according to the given orders is nonsingular.

For each $X_i,$ we already know that the cardinality of both $\sB(X_i)$ and $\sO(X_i)$ is  $|X_i|^{|X_i|-1}$ (Theorem \ref{main1} and Corollary \ref{eilrank}). For simplicity, we let $j_i := |X_i|^{|X_i|-1}.$ We fix an order for elements in $\sB(X_i) = \{ b_{i, 1} <  \cdots < b_{i, j_i} \}$ and an order for the elements in $\sO(X_i) = \{ o_{i,1} < \cdots < o_{i, j_i} \}.$ Let $\cM_i$ be the matrix of the pairing $\llangle , \rrangle_\Com$ between $\sB(X_i)$ and $\sO(X_i)$ according to the fixed ordering, i.e., the $(\ell, m)$-entry of $\cM_i$ is given by $\llangle o_{i, \ell}, b_{i, m} \rrangle_\Com.$ By Lemma \ref{consist}, $\cM_i$ is in fact the the matrix of the pairing $\llangle , \rrangle$ between $\sB(X_i)$ and $\sO(X_i).$ Because $\llangle , \rrangle$ is a perfect pairing between $\Lie_2(|X_i|)$ and $\Eil_2(|X_i)$ (Theorem \ref{main3}), the matrix $\cM_i$ is nonsingular.

We give a lexicographic order on the elements of $\sB_{par}^\Com(X)$ according to the orders we fixed on $\sB(X_i)$'s: for any two distinct elements $b_1 \cdots b_k$ and $b_1' \cdots b_k'$ in $\sB_{par}^\Com(X),$ where $b_i, b_i' \in \sB(X_i)$ for each $i,$ we say $b_1 \cdots b_k < b_1' \cdots b_k'$ in $\sB_{par}^\Com(X)$ if at the first position, say $\ell,$ these two elements differ, we have $b_\ell < b_\ell'$ in $\sB(X_i).$ Hence, the order of the elements in $\sB_{par}^\Com(X)$ looks like:
\begin{eqnarray*}
& &b_{1,1} \cdots b_{k-1, 1} b_{k,1} \ < \ b_{1,1}  \cdots b_{k-1, 1} b_{k,2} \ < \ \cdots \ < \ b_{1,1} \cdots b_{k-1, 1} b_{k,j_k} \\
&<&  b_{1,1}  \cdots b_{k-2, 1} b_{k-1, 2} b_{k,1} \ < \ b_{1,1} \cdots b_{k-2, 1} b_{k-1, 2} b_{k,2} \ < \ \cdots \ < \ b_{1,1} \cdots b_{k-2, 1} b_{k-1, 2} b_{k,j_k}  \\
& & \cdots \cdots \cdots \cdots \cdots \\
&<& b_{1,j_1} \cdots b_{k-1, j_{k-1}} b_{k,1} \ < \ b_{1,j_1} \cdots b_{k-1, j_{k-1}} b_{k,2} \ < \  \cdots \ < \ b_{1,j_1} \cdots b_{k-1, j_{k-1}} b_{k,j_k}. 
\end{eqnarray*}
Similarly, we give a lexicographic order on the elements of $\sO_{par}^\Com(X)$ according to the orders we fixed on $\sO(X_i)$'s. Let $\cM$ be the matrix of the pairing $\llangle , \rrangle_\Com$ between $\sB_{par}^\Com(X)$ and $\sO_{par}^\Com(X)$ according the two orders we just defined. One can check that $\cM$ is the Kronecker products of $\cM_1, \dots, \cM_k:$
$$\cM = \cM_1 \otimes \cdots \otimes \cM_k.$$
Since all of $\cM_1, \dots, \cM_k$ are nonsingular, by using Lemma \ref{kron} $k-1$ times, we conclude that $\cM$ is nonsingular.
\end{proof}

Proposition \ref{com} and Proposition \ref{com2} follows from Proposition \ref{nonsingcom} and Lemma \ref{vanish3com}. We can also conclue:

\begin{thm}\label{main5}
The pairing $\llangle , \rrangle_\Com$ between $\sP_2(n)$ and $\sQ_2(n)$ is perfect.
\end{thm}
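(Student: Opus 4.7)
The plan is to assemble Theorem \ref{main5} from the pieces already established in the preceding pairing analysis, essentially by reading off a block-diagonal structure on the pairing matrix with respect to appropriate bases. First I would fix bases on both sides: for $\sP_2(n)$ I take $\sB^\Com(X)$ built from a basis $\sB(X_i)$ of each $\Lie_2(|X_i|)$ (so by Proposition \ref{com} this is a basis of $\sP_2(n)$), and for $\sQ_2(n)$ I take $\sO^\Com(X)$ built from a basis $\sO(X_i)$ of each $\Eil_2(|X_i|)$ (so by Proposition \ref{com2} this is a basis of $\sQ_2(n)$). Both bases are naturally indexed by the set of ordered partitions $par : X = \bigsqcup_{i=1}^k X_i$ with $\max(X_1) < \cdots < \max(X_k)$.

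Next I would group the basis elements by the partition $par$ and order the two bases so that elements with the same $par$ appear in contiguous blocks. By Lemma \ref{vanish3com}, whenever $\alpha \in \sB^\Com(X)$ and $\beta \in \sO^\Com(X)$ belong to different partitions we have $\llangle \beta, \alpha \rrangle_\Com = 0$. Therefore the matrix $\cM$ of the pairing $\llangle\,,\,\rrangle_\Com$ between $\sB^\Com(X)$ and $\sO^\Com(X)$ in this ordering is block-diagonal, with one block $\cM(par)$ for each ordered partition $par$ of $X$.

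Each diagonal block $\cM(par)$ is precisely the matrix studied in Proposition \ref{nonsingcom}, which shows it to be nonsingular (it is realized there as the Kronecker product $\cM_1 \otimes \cdots \otimes \cM_k$ of the perfect-pairing matrices between $\sB(X_i)$ and $\sO(X_i)$, each of which is nonsingular by Theorem \ref{main3} combined with Lemma \ref{consist}). A block-diagonal matrix is nonsingular iff each diagonal block is, so $\cM$ is nonsingular.

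Since $\sB^\Com(X)$ and $\sO^\Com(X)$ are bases of $\sP_2(n)$ and $\sQ_2(n)$ respectively, and the matrix of $\llangle\,,\,\rrangle_\Com$ between them is nonsingular, the pairing is perfect, establishing Theorem \ref{main5}. There is no genuinely hard step remaining: all the combinatorics (the block-vanishing of Lemma \ref{vanish3com} and the nonsingularity of each block via the Kronecker-product identification in Proposition \ref{nonsingcom}) has been carried out already; the only thing to check is that one is allowed to simultaneously order both bases so that the blocks line up along the diagonal, which is clear because both $\sB^\Com(X)$ and $\sO^\Com(X)$ are indexed by the same set of ordered partitions.
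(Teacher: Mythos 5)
Your proposal is correct and follows essentially the same route as the paper: the perfection of $\llangle\,,\,\rrangle_\Com$ is read off from the block-diagonal structure given by Lemma \ref{vanish3com} together with the nonsingularity of each block from Proposition \ref{nonsingcom}. The only organizational difference is that the paper deduces Propositions \ref{com} and \ref{com2} \emph{and} Theorem \ref{main5} simultaneously from these two ingredients (using only that $\sB^\Com(X)$ and $\sO^\Com(X)$ are spanning sets, since a spanning set paired invertibly against another spanning set is automatically independent), whereas you invoke the basis statements as inputs; since their proofs do not depend on Theorem \ref{main5}, this is not circular.
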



%

\section{Further discussion and questions}\label{questions}

One notices that for all the relations (S1), (S2), (J1), (J2) and (MJ) we have in $\Lie_2(n),$ the elements in each of them has exactly the same number of $\brkone$'s and $\brktwo$'s. Therefore, it is natural to consider the following submodules of $\Lie_2(n):$

\begin{defn}
For any $i = 0, 1, \dots, n-1,$ we define $\Lie_2(n,i)$ to be the submodule of $\Lie_2(n)$ that is generated by all the monomials in $\Lie_2(n)$ with exactly $i$ $\brkone$'s (and $n-1-i$ $\brktwo$'s). 
\end{defn}

It is clear that we can write $\Lie_2(n)$ as the direct sum of $n$ submodules:

\begin{lem}
$$\Lie_2(n) = \bigoplus_{i=0}^{n-1} \Lie_2(n,i)$$ 
\end{lem}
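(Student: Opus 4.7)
The plan is to exhibit $\Lie_2(n) = \bigoplus_{i=0}^{n-1} \Lie_2(n,i)$ as a direct-sum decomposition coming from a $\mathbb{Z}$-grading on the free module of monomials that descends to the quotient by the defining relations. First I would note that every monomial $m \in M_n$ is the interpretation of some $2$v-colored binary tree in $\BT_n$, which has exactly $n-1$ internal vertices. Hence $m$ uses some number $i(m) \in \{0,1,\dots,n-1\}$ of brackets $\brkone$ and $n-1-i(m)$ brackets $\brktwo$, and the free $R$-module on $M_n$ splits as $\bigoplus_{i=0}^{n-1} M_n^{(i)}$ where $M_n^{(i)}$ is spanned by those $m$ with $i(m)=i$.

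Next, I would verify that the relation submodule $Rel_n \subset R\langle M_n\rangle$ (equivalently, the submodule $J_n$ from Section 5) is homogeneous with respect to this grading, by inspecting the five kinds of generating relations. In (S1) both $[x,y]$ and $[y,x]$ contribute the same count of $\brkone$'s (and similarly inside any ambient monomial containing this sub-relation); in (S2), both terms carry the same count of each bracket; in (J1), all three summands have the same pair of nested $\brkone$'s over the same multi-set of arguments, so the $\brkone$-counts agree; (J2) is the analogous statement for $\brktwo$; and in (MJ), each of the six summands contains exactly one $\brkone$ and one $\brktwo$ at the three ``outer'' positions, contributing the same counts in every term. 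Since the generating relations preserve the $i$-grading, so does the submodule they generate: $Rel_n = \bigoplus_{i=0}^{n-1} (Rel_n \cap M_n^{(i)})$.

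From this homogeneity, taking quotients commutes with the direct sum, so
\[
\Lie_2(n) \;=\; R\langle M_n \rangle / Rel_n \;=\; \bigoplus_{i=0}^{n-1} M_n^{(i)} / \bigl(Rel_n \cap M_n^{(i)}\bigr) \;=\; \bigoplus_{i=0}^{n-1} \Lie_2(n,i),
\]
where the last equality uses that $\Lie_2(n,i)$ is by definition the image of $M_n^{(i)}$ in $\Lie_2(n)$. There is no genuine obstacle here: the only content of the argument is the homogeneity check for each of (S1), (S2), (J1), (J2), (MJ), which is immediate from how the relations are written. If anything, the minor bookkeeping item is making sure ``homogeneity'' is verified not just for the five basic relations themselves, but for the full submodule they generate inside $R\langle M_n\rangle$, which follows because substituting a homogeneous relation into any ambient monomial preserves the total bracket counts of the surrounding context.
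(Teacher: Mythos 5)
Your proof is correct and is exactly the argument the paper intends: the paper justifies the lemma with the single observation that every term in each of (S1), (S2), (J1), (J2), (MJ) has the same number of $\brkone$'s and $\brktwo$'s, which is precisely the homogeneity of the relation submodule with respect to your grading. Your write-up merely spells out the standard consequence that a quotient by a homogeneous submodule inherits the direct-sum decomposition.
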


$\cB_n(X) = \{ b_G \ | \ G \in \overline{\cG}_n = \cR_n \}$ is a basis for $\Lie_2(n),$ and for any $G \in \overline{\cG}_n,$ the number of $\brkone$'s in $b_G$ is equal to the number of red edges in $G$, or equivalently, the number of increasing edges in $G$ when considering $G$ as a rooted tree. Thus, we obtain the bases for $\Lie_2(n,i)$'s.

\begin{prop}\label{subrank}
The set $\cB_{n,i}(X) := \{ b_G \ | \ G \in \overline{\cG}_n \mbox{ has $i$ red edges} \} =  \{ b_G \ | \ G \in \cR_n \mbox{ has $i$ increasing edges} \}$ is a basis for $\Lie_2(n,i).$

Hence, the rank of $\Lie_2(n,i)$ equals to the number of rooted trees on $n$ vertices with $i$ increasing edges.
\end{prop}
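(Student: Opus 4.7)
The proof is essentially a grading argument that leverages the basis $\cB_n(X)$ already established. The key observation is that the five relations (S1), (S2), (J1), (J2), (MJ) generating the relation module $Rel_n$ are all homogeneous with respect to the bidegree $(\#\brkone, \#\brktwo)$: each term in each relation contains the same number of red brackets and the same number of blue brackets. Consequently, the decomposition $M_n = \bigsqcup_{i=0}^{n-1} M_{n,i}$ (where $M_{n,i}$ is the set of monomials with $i$ copies of $\brkone$) descends to the already-stated direct sum decomposition $\Lie_2(n) = \bigoplus_{i=0}^{n-1} \Lie_2(n,i)$.

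My plan is as follows. First, I would observe that by construction of the map $G \mapsto b_G$ in Definition \ref{defBn}, each red (equivalently, increasing) edge of $G$ contributes a $\brkone$ in $b_G$ and each blue edge contributes a $\brktwo$. Hence the partition $\overline{\cG}_n = \bigsqcup_{i=0}^{n-1} \overline{\cG}_{n,i}$, where $\overline{\cG}_{n,i}$ consists of the trees with exactly $i$ red edges, induces a partition $\cB_n(X) = \bigsqcup_{i=0}^{n-1} \cB_{n,i}(X)$ with $\cB_{n,i}(X) \subseteq \Lie_2(n,i)$.

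Next, I would check that $\cB_{n,i}(X)$ spans $\Lie_2(n,i)$. Given any $v \in \Lie_2(n,i)$, since $\cB_n(X)$ is a basis for $\Lie_2(n)$ (Theorem \ref{main1} together with the corollary after Proposition \ref{nonsing}), we may write $v = \sum_{G \in \overline{\cG}_n} \alpha_G b_G$ uniquely. Projecting this equation onto $\Lie_2(n,i)$ using the direct sum decomposition, and noting that $b_G \in \Lie_2(n, j)$ when $G \in \overline{\cG}_{n,j}$, we conclude that $\alpha_G = 0$ for $G \notin \overline{\cG}_{n,i}$, so $v = \sum_{G \in \overline{\cG}_{n,i}} \alpha_G b_G$ is a combination of elements of $\cB_{n,i}(X)$. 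Linear independence of $\cB_{n,i}(X)$ is inherited from that of $\cB_n(X)$ as a subset.

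Finally, the rank computation is immediate: $\operatorname{rk}\Lie_2(n,i) = |\cB_{n,i}(X)| = |\overline{\cG}_{n,i}|$, and under the bijection $\overline{\cG}_n = \cR_n$ (Lemma from Section 2), red edges in $\overline{\cG}_n$ correspond exactly to increasing edges in $\cR_n$, giving the stated count. I do not anticipate any real obstacle here; the only subtle point is that one must invoke the grading compatibility of the relations to legitimately split both sides of the identity $v = \sum \alpha_G b_G$ by bidegree, and this is manifest from inspection of (S1), (S2), (J1), (J2), (MJ).
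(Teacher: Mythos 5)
Your proposal is correct and follows essentially the same route as the paper: the paper also observes that the relations (S1), (S2), (J1), (J2), (MJ) are homogeneous in the number of $\brkone$'s, deduces the direct sum decomposition $\Lie_2(n) = \bigoplus_i \Lie_2(n,i)$, and notes that $b_G$ has as many $\brkone$'s as $G$ has red (increasing) edges, so the basis $\cB_n(X)$ splits into bases for the graded pieces. Your projection argument just makes explicit what the paper leaves as an immediate consequence.
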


Noting that $\Lie(n) \cong \Lie_2(n,n-1),$ we recover the formulas for the rank of $\Lie(n).$

\begin{cor}
$\Lie(n)$ is free of rank $(n-1)!.$
\end{cor}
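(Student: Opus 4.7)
The plan is to deduce this corollary directly from Proposition \ref{subrank} by identifying $\Lie(n)$ with the ``extreme'' summand $\Lie_2(n,n-1)$ and then counting the relevant rooted trees by a standard sequential construction.

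First I would observe that $\Lie(n)$ embeds as the submodule $\Lie_2(n,n-1)$ of $\Lie_2(n)$. Indeed, $\Lie(n)$ is by definition the multilinear part of the free Lie algebra on $X$ with the single bracket $\brkone$, so each of its monomials uses exactly $n-1$ copies of $\brkone$ and no copies of $\brktwo$. The defining relations for $\Lie(n)$, namely (S1) and (J1), are precisely the relations among such monomials inherited from $\Lie_2(n)$, since (S2), (J2) and (MJ) involve $\brktwo$ and therefore contribute no relation among pure-$\brkone$ monomials. Hence $\Lie(n) \cong \Lie_2(n,n-1)$ as $R$-modules.

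Next I would apply Proposition \ref{subrank} with $i=n-1$. This identifies the rank of $\Lie_2(n,n-1)$ with the number of rooted trees in $\cR_n$ having exactly $n-1$ increasing edges. Since a rooted tree on $n$ vertices has precisely $n-1$ edges in total, this condition is equivalent to requiring that \emph{every} edge be increasing, i.e.\ every non-root vertex is larger than its parent. Such trees are exactly the \emph{increasing rooted trees} on $X=\{x_1<\cdots<x_n\}$, and in any such tree the root is forced to be $x_1$ (the minimum).

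Finally I would count these increasing rooted trees by inserting the vertices in the order $x_1,x_2,\ldots,x_n$. The vertex $x_1$ must be the root, and for each $k\ge 2$ the vertex $x_k$ must be attached as a child of some already-placed vertex (any one of $x_1,\ldots,x_{k-1}$ works, and each produces a distinct valid tree). This gives $1\cdot 2\cdots (n-1)=(n-1)!$ trees. No step here is a real obstacle---the only content is the identification $\Lie(n)\cong\Lie_2(n,n-1)$, which is essentially tautological, and the recursive count is classical (the number of increasing trees on $n$ vertices).
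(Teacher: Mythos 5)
Your proposal is correct and follows essentially the same route as the paper: identify $\Lie(n)$ with $\Lie_2(n,n-1)$, apply Proposition \ref{subrank} with $i=n-1$, and observe that the relevant trees are exactly the increasing trees on $n$ vertices. The only difference is that you supply the standard sequential-insertion count showing there are $(n-1)!$ such trees, whereas the paper simply cites this classical fact.
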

\begin{proof}
The rank of $\Lie(n)$ equals to the rank of $\Lie_2(n,n-1)$. By Proposition \ref{subrank}, the rank is the number of increasing trees on $n$ vertices. (Here by increasing trees, we mean rooted trees with all the edges are increasing edges.) However, it is well known \cite[page 82]{kuz-pak-pos} that the number of increasing trees on $n$ vertices is $(n-1)!.$
\end{proof}


If we denote by $a(n,i)$ the number of rooted trees on $n$ vertices with $i$ increasing edges, then by the exponential generating function for the $SL_2$-characters for $\Lie_2(n)$ with $SL_2$ action obtained in \cite{dot-kho}, we get the generating function for $a(n,i).$

\begin{cor}
\begin{equation}\label{geninc}
\sum_{i=0}^{n-1} a(n,i) x^i = \prod_{k=1}^{n-1} (k x + (n-k)).
\end{equation}
Hence, the number of rooted trees on $n$ vertices with $i$ increasing edges is given by
\begin{equation}\label{inc}
a(n,i) = \sum_{K: \mbox{ a $i$-subset of $[n-1]$}} \prod_{k \in K} k \prod_{k' \in [n-1] \setminus K} (n-k').
\end{equation}
\end{cor}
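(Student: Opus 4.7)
The plan is to obtain the generating function identity \eqref{geninc} from the $SL_2$-character formula established in \cite{dot-kho}, and then derive the explicit formula \eqref{inc} by a direct expansion of the product.

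First, I would appeal to \cite{dot-kho} for the $SL_2$-character of $\Lie_2(n)$. The compatibility condition means any linear combination of $\brkone$ and $\brktwo$ is again a Lie bracket, so $GL_2$ acts on the $2$-dimensional space $R\brkone \oplus R\brktwo$ by change of basis, and this action extends naturally to $\Lie_2(n)$. Under this $SL_2$-action, a monomial containing $i$ brackets $\brkone$ and $n-1-i$ brackets $\brktwo$ lies in the weight space determined by $i$, so the $SL_2$-character evaluated at the appropriate diagonal torus element is precisely $\sum_{i=0}^{n-1} \dim \Lie_2(n,i)\, x^i$. Extracting the coefficient of $z^n/n!$ from the exponential generating function of these characters obtained in \cite{dot-kho} then yields $\prod_{k=1}^{n-1}(kx+(n-k))$.

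Second, Proposition \ref{subrank} identifies $\dim \Lie_2(n,i)$ with $a(n,i)$, so the character identity reads exactly as \eqref{geninc}. To obtain \eqref{inc}, I would expand the product directly: in each factor $kx + (n-k)$ one chooses either the $x$-term (contributing the scalar $k$ and raising the exponent of $x$ by one) or the constant term (contributing $n-k$). Indexing the selection by the subset $K \subseteq [n-1]$ of indices for which the $x$-term is chosen, the coefficient of $x^i$ is exactly $\sum_{|K|=i} \prod_{k \in K} k \prod_{k' \in [n-1]\setminus K}(n-k')$, which is \eqref{inc}.

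The main obstacle, such as it is, is a bookkeeping one: matching the substitution used for the $SL_2$-torus variable in \cite{dot-kho} with our grading by number of $\brkone$'s (rather than $\brktwo$'s, which would correspond to the substitution $x \leftrightarrow 1/x$ together with a compensating factor of $x^{n-1}$). This can be pinned down either by direct inspection of the conventions in \cite{dot-kho} or by checking a small case; for instance, $n=3$ gives $(x+2)(2x+1) = 2x^2 + 5x + 2$, which agrees with $a(3,0)=2$, $a(3,1)=5$, $a(3,2)=2$. Once conventions are aligned, the corollary is immediate.
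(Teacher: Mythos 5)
Your proposal is correct and follows essentially the same route as the paper: both derive \eqref{geninc} from the $SL_2$-character formula of \cite{dot-kho} (the paper cites their formula (16), $\sum_i a(n,i)q^{n-1-2i} = \prod_k (kq+(n-k)q^{-1})$, then multiplies by $q^{n-1}$, sets $x=q^2$, and reindexes using $a(n,i)=a(n,n-1-i)$ --- exactly the ``compensating factor'' bookkeeping you describe), and both obtain \eqref{inc} by direct expansion of the product. Your $n=3$ sanity check correctly confirms the alignment of conventions.
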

\begin{proof}
By formula (16) in \cite{dot-kho}, we have
$$\sum_{i=0}^{n-1} a(n,i) q^{n-1-2i} = \sum_{k=1}^{n-1} (kq + (n-k)q^{-1}).$$
We can obtain \eqref{geninc} by multiplying $q^{n-1}$ on both sides of the above formula, setting $x = q^2,$ reindexing the left side, and then applying the fact $a(n,i) = a(n, n-1-i).$
\end{proof}
We ask the following question:
\begin{ques}
Can one find a combinatorial proof for formulas \ref{geninc} and \ref{inc}?
\end{ques}

As we see, $\Lie(n)$ is a submodule of $\Lie_2(n).$ Thus, we can consider $\Lie_2(n)$ to be a generalization of $\Lie(n).$ Hence, another question which might be interesting is:
\begin{ques}
Can we generalize $\Lie(n)$ further? Is it possible to define $\Lie_k(n)$ for any $k \ge 1$ so that it has nice rank formulas like those for $\Lie(n)$ and $\Lie_2(n)$? What are the right combinatorial objects for $\Lie_k(n)$, if it can be defined?
\end{ques}

\bibliographystyle{amsplain}
\bibliography{gen}

\end{document}